\theoremstyle{ams}
\newtheorem{theorem}{Theorem}[section]
\newtheorem{proposition}[theorem]{Proposition}
\newtheorem{lemma}[theorem]{Lemma}
\newtheorem{corollary}[theorem]{Corollary}
\numberwithin{equation}{section}
\theoremstyle{definition}
\newtheorem{remark}[theorem]{Remark}
\newtheorem{example}[theorem]{Example}
\newcommand{\C}{\mathbb{C}}
\newcommand{\Q}{\mathbb{Q}}
\newcommand{\Z}{\mathbb{Z}}
\newcommand{\N}{\mathbb{N}}
\newcommand{\CP}{\mathbb{C}P}
\newcommand{\GL}{\mathrm{GL}}
\newcommand{\vc}{\mathrm{vc}}
\newcommand{\cF}{\mathcal{F}}
\newcommand{\va}{\mathbf{a}}
\newcommand{\bb}{\mathbf{b}}
\newcommand{\uu}{\mathbf{u}}
\newcommand{\vv}{\mathbf{v}}
\newcommand{\vs}{\mathbf{s}}
\newcommand{\vr}{\mathbf{r}}
\newcommand{\vw}{\mathbf{w}}
\newcommand{\vz}{\mathbf{z}}
\newcommand{\betti}{\beta}
\DeclareMathOperator{\aut}{Aut}
\DeclareMathOperator{\Hom}{Hom}
\begin{document}
\title[Classification of quasitoric manifolds with $\betti_2=2$]{Topological classification of quasitoric manifolds with the second Betti number $2$}

\author[S.Choi]{Suyoung Choi}
\address{Department of Mathematics, Ajou University, San 5, Woncheon-dong, Yeongtong-gu, Suwon 443-749, Korea}
\email{schoi@ajou.ac.kr}

\author[S.Park]{Seonjeong Park}
\address{Department of Mathematical Sciences, KAIST, 335 Gwahangno, Yu-sung Gu, Daejeon 305-701, Korea}
\email{psjeong@kaist.ac.kr}

\author[D.Y.Suh]{Dong Youp Suh}
\address{Department of Mathematical Sciences, KAIST, 335 Gwahangno, Yu-sung Gu, Daejeon 305-701, Korea}
\email{dysuh@math.kaist.ac.kr}

\thanks{The first author is supported by the new faculty research fund by Ajou University. The second author is supported by the second stage of the Brain Korea 21 Project, the Development of Project of Human Resources in Mathematics, KAIST in 2011. The third author is partially supported by Basic Science Research Program through the national Research Foundation of Korea(NRF) founded by the Ministry of Education, Science and Technology (2010-0001651).}

\subjclass[2000]{Primary 57S25, 57R19, 57R20; Secondary 14M25}
\keywords{Quasitoric manifold, generalized Bott manifold, cohomological rigidity, moment angle manifold, Toric topology}

\date{\today}
\maketitle

\begin{abstract}
    A quasitoric manifold is a $2n$-dimensional compact smooth manifold with a locally standard action of an $n$-dimensional torus whose orbit space is a simple polytope. In this article, we classify quasitoric manifolds with the second Betti number $\betti_2=2$ topologically. Interestingly, they are distinguished by their cohomology rings up to homeomorphism.
\end{abstract}

\section{Introduction}
    The notion of a quasitoric manifold was introduced by Davis and Januszkiewicz~\cite{DJ}. A \emph{quasitoric manifold} $M$ is a $2n$-dimensional compact smooth manifold with a locally standard action of an $n$-dimensional torus $T^n=(S^1)^n$, whose orbit space can be identified with an $n$-dimensional simple polytope $P$. Here, the orbit map $\pi \colon M \to P$ maps every $k$-dimensional orbit to a point in the interior of a codimension-$k$ face of $P$ for $k=0,\ldots,n$. A typical example of a quasitoric manifold is
    a complex projective space $\CP^n$
    of complex dimension $n$
    with the standard $T^n$-action whose orbit space is the $n$-simplex $\Delta^n$.

    A quasitoric manifold is a topological analogue of a non-singular projective toric variety. A \emph{toric variety} $X$ of complex dimension $n$ is a normal algebraic variety which admits an action of an algebraic torus $(\C^\ast)^n$ having a dense orbit. We call a non-singular compact toric variety a \emph{toric manifold}. Note that we have the restricted action of $T^n = (S^1)^n \subset (\C^\ast)^n$ on a toric manifold $X$. One can easily show that this action is locally standard, and if $X$ is projective, then there is a moment map whose image is a simple convex polytope. Hence, all projective toric manifolds are quasitoric manifolds. However, the converse is not always true. For instance, $\CP^2 \sharp \CP^2$ with an appropriate $T^2$-action is a quasitoric manifold over $\Delta^1 \times \Delta^1$ but not a toric manifold, because there is no
    almost complex structure on $\CP^2\#\CP^2$.
    Therefore, the notion of a quasitoric manifold can be regarded as a topological generalization of that of a projective toric manifold in algebraic geometry.

    We shall investigate quasitoric manifolds $M$ with the second Betti number $\betti_2=2$. As will be remarked in Section~\ref{sec:quasitoric manifolds of $b_2=2$}, the orbit space of $M$ can be identified with a product of two simplices. The classification of projective toric manifolds with $\betti_2=2$ as varieties
    was completed by Kleinschmidt \cite{K}. More generally, toric manifolds over a product of simplices were studied by Dobrinskaya \cite{D} and Choi-Masuda-Suh \cite{ch-ma-su08}. These toric manifolds are known as generalized Bott manifolds. In particular, toric manifolds with $\betti_2=2$ are two-stage generalized Bott manifolds, which will be explained in Section~\ref{sec:quasitoric manifolds of $b_2=2$}. It is shown in \cite{CMS} that all two-stage generalized Bott manifolds are classified by their cohomology rings, which gives the smooth classification of toric manifolds with $\betti_2=2$.

    The purpose of this paper is to classify quasitoric manifolds with $\betti_2=2$ up to homeomorphism.
    For this, we show that if the cohomology ring of a quasitoric manifold is isomorphic to that of a two-stage generalized Bott manifold, then the quasitoric manifold is homeomorphic to a two-stage generalized Bott manifold. We also show that for a polytope which is the product of two simplices there are only finitely many quasitoric manifolds over the polytope, which are not homeomorphic to generalized Bott manifolds. As we will see in the paragraph after~\eqref{characteristic matrix}
    on page~\pageref{M}, any quasitoric manifold with $\betti_2=2$ can be written as $M_{\va,\bb}$ for some $\va \in \Z^m$ and $\bb \in \Z^n$, where the orbit space of $M_{\va,\bb}$ is $\Delta^n \times \Delta^m$. Then we have the following topological classification.

    \begin{theorem}\label{classify}
        Any quasitoric manifold with the second Betti number $\betti_2=2$
        is homeomorphic to either a two-stage generalized Bott manifold, or
$$
    M_{\vs,\vr}  \quad\text{ for $\vs:=(2,\ldots,2,0,\ldots,0) \in \Z^m$ and $\vr:=(1,\ldots,1,0,\ldots,0) \in \Z^n$},
$$
        where the number of nonzero components in $\vs$, respectively $\vr$, is less than or equal to $\lfloor \frac{m+1}{2} \rfloor$, respectively $\lfloor \frac{n+1}{2} \rfloor$.
        Moreover, if $n$ or $m$ is $1$, then $M_{\vs,\vr}$ is a two-stage generalized Bott manifold, or $\CP^{m+n} \# \CP^{m+n}$,
        or $M_{2,(1,0,\ldots,0)}$.
    \end{theorem}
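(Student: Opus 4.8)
The plan is to reduce the classification to a finite combinatorial problem about characteristic matrices over $\Delta^n\times\Delta^m$. A quasitoric manifold with $\betti_2=2$ has orbit space $\Delta^n\times\Delta^m$ (as noted, to be established in Section~\ref{sec:quasitoric manifolds of $b_2=2$}), so it is determined by a characteristic function, and after an appropriate change of basis in $T^{m+n}$ and a relabelling of facets one may normalise the characteristic matrix to the standard form $\left(\begin{smallmatrix} I_n & * \\ 0 & I_m\end{smallmatrix}\right)$-type block shape, which forces the manifold to be of the form $M_{\va,\bb}$ for some $\va\in\Z^m$, $\bb\in\Z^n$ (this is the content of the paragraph around~\eqref{characteristic matrix}). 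So I would begin by writing down explicitly the cohomology ring $H^*(M_{\va,\bb})$ via the Davis--Januszkiewicz presentation: it is $\Z[x,y]$ modulo an ideal generated by two relations, $x^{n+1}=x^n(\,\text{linear in }y\,)+\dots$ and $y^{m+1}=y^m(\,\text{linear in }x\,)+\dots$, with coefficients read off from $\va$ and $\bb$.

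Next I would determine exactly which pairs $(\va,\bb)$ give a ring isomorphic to that of a two-stage generalized Bott manifold, i.e. one in which the two defining relations can be taken to be $(y - c x)^{m+1}\cdot(\text{unit}) \equiv 0$ and $x^{n+1}\equiv 0$ after a change of the generators $x,y$. The key algebraic step is: replace the generators $x\mapsto x$, $y\mapsto y+\lambda x$ and ask when the resulting relations become the "split" Bott form. I expect this to boil down to a divisibility / solvability condition on the entries of $\va$ and $\bb$: generically the system is solvable over $\Z$ and one lands in the generalized Bott case, and the obstruction to solvability is a mod-2 phenomenon. This is exactly where the vectors $\vs=(2,\dots,2,0,\dots,0)$ and $\vr=(1,\dots,1,0,\dots,0)$ should appear — they are the "minimal" representatives of the non-split classes, with the constraint on the number of nonzero entries ($\le\lfloor\frac{m+1}{2}\rfloor$, resp. $\le\lfloor\frac{n+1}{2}\rfloor$) coming from the fact that one can always flip the roles of complementary facets of a simplex, so a configuration with more than half the entries nonzero is equivalent to its complement with fewer. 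The main obstacle is carrying out this normal-form reduction carefully over $\Z$ (not $\Q$): one must track that the base changes used are genuinely $\GL(\Z)$ and that the relabellings of facets are legitimate symmetries of $\Delta^n\times\Delta^m$, and then argue that two distinct such $M_{\vs,\vr}$ really are non-homeomorphic — for which I would invoke the cohomology ring (the paper's abstract promises cohomological rigidity within this family, so the homeomorphism classification coincides with the ring classification, and for the finitely many exceptional $M_{\vs,\vr}$ one can compute the rings directly and see they are pairwise non-isomorphic and non-isomorphic to any generalized Bott ring).

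Finally, for the low-dimensional addendum I would specialise. If $m=1$ (the case $n=1$ is symmetric), then $\Delta^n\times\Delta^1$ has few facets and the characteristic matrix has only one free parameter beyond the Bott data, so $M_{\vs,\vr}$ can have at most one nonzero entry in each of $\vs,\vr$ (since $\lfloor\frac{1+1}{2}\rfloor=1$); enumerating the possibilities, the relation $y^2 = \dots$ together with $x^{n+1}=\dots$ gives, up to isomorphism, either a genuine two-stage Bott manifold (a $\CP^1$- or $\CP^n$-bundle), or the connected sum $\CP^{m+n}\#\CP^{m+n}$ (which is the classical quasitoric-but-not-toric example over $\Delta^1\times\Delta^1$, generalised), or the single sporadic manifold $M_{2,(1,0,\dots,0)}$. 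I would verify this by direct computation of the finitely many cohomology rings and matching them against the known ring of $\CP^{m+n}\#\CP^{m+n}$ (namely $\Z[x,y]/(x^2-y^2,\,xy,\,y^{m+n+1})$ or the appropriate variant). I do not expect any serious difficulty here beyond bookkeeping; the substance of the theorem is the normal-form reduction in the general case.
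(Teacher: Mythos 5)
Your starting point matches the paper: the normalization of the characteristic matrix to the form $M_{\va,\bb}$, the observation that the non-singularity condition forces $a_jb_i\in\{0,2\}$ (so the non-Bott cases are exactly the $M_{\vs,\vr}$), and the reduction $r\mapsto n+1-r$, $s\mapsto m+1-s$ by exchanging the distinguished facet of each simplex factor with a complementary one (which, combined with the sign changes of Remark~\ref{rmk:sign}, can indeed be made to work and is what the paper's Lemma~\ref{lem:NGB} encodes as an explicit homeomorphism of $S^{2n+1}\times S^{2m+1}$ by coordinate permutation and conjugation). The genuine gap is in the ``moreover'' clause. For $m=1$ the theorem asserts a collapse far beyond that flip: $M_{2,\bb}$ is homeomorphic to the generalized Bott manifold $M_{2,\mathbf{0}}$ whenever $\bb$ has an \emph{even} number of nonzero entries, and to $M_{2,(1,0,\ldots,0)}$ when odd. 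These identifications cannot come from any relabelling of facets or $\GL(\Z)$ change of basis, because (by Proposition~\ref{prop:condition to be GBM}) $M_{2,(1,1,0,\ldots,0)}$ is \emph{not equivalent} to any generalized Bott manifold, yet it is homeomorphic to one. The paper produces these homeomorphisms by a quaternionic trick on the moment angle manifold $S^{2n+1}\times S^3$ (pairing coordinates of $S^{2n+1}$ two at a time against $S^3\subset\mathbb{H}$), together with a vertex-cut argument identifying $\Delta^n\times\Delta^1$ with a blow-up of $\Delta^{n+1}$ and hence $M_{1,\bb}$ with $\CP^{n+1}\#\CP^{n+1}$ or $\CP^{n+1}\#\overline{\CP^{n+1}}$. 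Your proposal contains no substitute for either construction.

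Moreover, your fallback at exactly these points --- ``invoke the cohomology ring'' and ``verify by direct computation of the finitely many cohomology rings and matching'' --- is circular. An isomorphism of cohomology rings does not by itself produce a homeomorphism; cohomological rigidity for this family is Theorem~\ref{main}, which the paper \emph{deduces from} the homeomorphism classification, not the other way around. Computing that $H^\ast(M_{2,(1,1,0,\ldots,0)})\cong H^\ast(M_{2,\mathbf{0}})$ establishes nothing about a homeomorphism until the rigidity statement has been proved, and proving it requires precisely the explicit homeomorphisms you have omitted. (Also, deciding which $M_{\va,\bb}$ are generalized Bott manifolds is done in the paper at the level of the characteristic matrix via Proposition~\ref{prop:condition to be GBM}, not by putting the cohomology ring in split form; your cohomological criterion would again only identify the ring, not the manifold.)
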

    More precise classification results are summarized in Section~\ref{sec:Classification of quasitoric manifolds}. Note that there is an interesting quasitoric manifold over $\Delta^n \times \Delta^1$ which is homeomorphic to a generalized Bott manifold, but has no $T^{n+1}$-invariant almost complex structure; namely, $M_{2,(1,0)}$ is such a quasitoric manifold that is homeomorphic to a generalized Bott manifold $M_{2,(0,0)}$,
    as we will see in Lemma~\ref{lem:class a=pm2}.

    Furthermore, we can show that $M_{\va,\bb}$ and $M_{\va',\bb'}$
    with $M_{\va,\bb}/T$ and $M_{\va',\bb'}/T$ combinatorially equivalent to
    $\Delta^n \times \Delta^m$ are homeomorphic if and only if their cohomology rings are isomorphic as graded rings. In addition, the combinatorial types of certain polytopes are completely determined by the cohomology rings of quasitoric manifolds over those polytopes, see~\cite{CPS}. Products of simplices belong to the class of polytopes that have this property.
    That is, for a quasitoric manifold $M$, if the cohomology ring of $M$ is isomorphic to that of $M_{\va,\bb}$, then the orbit space of $M$ is combinatorially equivalent to the orbit space of $M_{\va,\bb}$.

    As a consequence, we have the following main theorem of this paper, which does not include any assumption on the type of the base polytope:
    \begin{theorem}\label{main}
        Two quasitoric manifolds with $\betti_2=2$ are homeomorphic if and only if their cohomology rings are isomorphic as graded rings.
    \end{theorem}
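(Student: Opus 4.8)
The plan is to deduce Theorem~\ref{main} by assembling the results developed in the body of the paper, so the proof itself will be short; the substance lies in the ingredients. The forward implication is purely formal: homeomorphic spaces have isomorphic cohomology rings, so nothing is needed there. For the converse, let $M$ and $N$ be quasitoric manifolds with $\betti_2=2$ and suppose $H^*(M)\cong H^*(N)$ as graded rings. By the structural observations of Section~\ref{sec:quasitoric manifolds of $b_2=2$}, each of $M$ and $N$ is of the form $M_{\va,\bb}$ with orbit space combinatorially a product of two simplices; say $M/T$ is combinatorially equivalent to $\Delta^n\times\Delta^m$ and $N/T$ to $\Delta^{n'}\times\Delta^{m'}$.

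The next step is to pin down the base polytope from the cohomology ring. Since products of simplices lie in the class of polytopes treated in \cite{CPS}, the combinatorial type of $M/T$ is determined by the graded ring $H^*(M)$; hence $M/T$ and $N/T$ are combinatorially equivalent, so $\{n,m\}=\{n',m'\}$. Swapping the two simplex factors is only a relabeling and changes neither the manifold nor its cohomology ring, so we may assume that $M=M_{\va,\bb}$ and $N=M_{\va',\bb'}$ are both quasitoric manifolds over a polytope combinatorially equivalent to $\Delta^n\times\Delta^m$. It then remains to invoke the fixed-polytope statement recorded just before the theorem: two such manifolds $M_{\va,\bb}$ and $M_{\va',\bb'}$ are homeomorphic precisely when their cohomology rings are isomorphic. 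Applying this to our $M$ and $N$ yields $M\cong N$, which finishes the argument.

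The real work — and the main obstacle — is the fixed-polytope statement, which is not formal. Its proof rests on the explicit homeomorphism classification of Theorem~\ref{classify}, on the cohomological rigidity of two-stage generalized Bott manifolds from \cite{CMS}, and on a case analysis showing that the finitely many exceptional quasitoric manifolds $M_{\vs,\vr}$ that are not generalized Bott manifolds (such as $\CP^{m+n}\#\CP^{m+n}$ and $M_{2,(1,0,\ldots,0)}$) are separated from one another, and from the Bott manifolds, by their cohomology rings — for instance by comparing their rings of relations, Betti numbers in each degree, or characteristic classes. The reconstruction input from \cite{CPS} is likewise substantial, but it is cited as an external result; granting it and the fixed-polytope statement, the theorem follows by the two reduction steps above.
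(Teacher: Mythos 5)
Your proposal is correct and follows essentially the same route as the paper: the paper's proof of Theorem~\ref{main} likewise first invokes the cohomological rigidity of products of simplices from \cite{CPS} to conclude that isomorphic cohomology forces $\{n,m\}=\{n',m'\}$, and then reduces to the fixed-polytope classification results (Corollary~\ref{cohomological rigidity 1} for $\Delta^n\times\Delta^1$ and Theorem~\ref{Cohomological rigidity 3} for $n,m>1$), which are exactly your ``fixed-polytope statement.'' Your description of the ingredients behind that statement (Theorem~\ref{classify}, the rigidity of two-stage generalized Bott manifolds from \cite{CMS}, and the case analysis separating the exceptional $M_{\vs,\vr}$ by their cohomology rings) also matches how the paper establishes it.
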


    This research is motivated by the \emph{cohomological rigidity problem} for quasitoric manifolds which asks whether the homeomorphism types of quasitoric manifolds are distinguished by their cohomology rings or not, see~\cite{MS} for the problem and other related problems. In general, the cohomological rigidity problem is rather bold because the cohomology ring as an invariant is not sufficient to determine topological types of manifolds. Indeed, many classical results such as \cite{H} provide many examples of pairs of manifolds which are homotopic but not homeomorphic. However, many $2n$-dimensional manifolds do not have $T^n$-symmetry, and, so far, there is no counterexample for the cohomological rigidity problem. On the contrary, some affirmative partial evidence is given by recent papers such as \cite{MP}, \cite{CMS}, \cite{CS}, \cite{CM} and others. Theorem~\ref{main} also gives another affirmative partial answer to the rigidity problem. For more information about rigidity problem, we refer the reader to the survey paper~\cite{ch-ma-su11}.

    This paper is organized as follows. In Section~\ref{Preliminaries}, we recall general facts on quasitoric manifolds and moment angle manifolds.
    In Section~\ref{sec:quasitoric manifolds of $b_2=2$}, we introduce generalized Bott manifolds, and deal with the cohomology rings of quasitoric manifolds with $\betti_2=2$.
    We find a necessary and sufficient condition for a quasitoric manifold to be equivalent to a generalized Bott manifold in some specific cases in Section~\ref{sec:quasitoric mfds equiv to GBM}.
    In Sections~\ref{sec:classification of quasitoric manifolds when m=1} and \ref{sec:quasi mfds over D^m x D^n}, we prove Theorem~\ref{classify}, and prepare to prove Theorem~\ref{main} by classifying quasitoric manifolds $M_{a,\bb}$ and $M_{\vs,\vr}$ up to homeomorphism. In Section~\ref{sec:proof of main}, we give a full proof of Theorem~\ref{main}. In the final section, we give the complete topological classification of quasitoric manifolds with $\betti_2=2$.

\section{Preliminaries}\label{Preliminaries}
    An $n$-dimensional (combinatorial) polytope is called \emph{simple} if exactly $n$ facets (codimension-one face) meet
    at each vertex. Let $P$ be a simple polytope of dimension-$n$ with $d$ facets, and let $\cF(P)=\{F_1,\ldots,F_d\}$ be the set of facets of $P$. Now consider a map $\lambda \colon \cF(P) \rightarrow \Z^n$ which satisfies the following \emph{non-singularity condition};
    \begin{equation}\label{non-singularity}
    \begin{array}{l}
    \lambda(F_{i_1}), \ldots, \lambda(F_{i_{\alpha}}) \mbox{ form a part of an integral basis of } \Z^n\\
    \mbox{whenever the intersection } F_{i_1} \cap \cdots \cap F_{i_{\alpha}} \mbox{ is non-empty.}
    \end{array}
    \end{equation}
    Such $\lambda$ is called a \emph{characteristic function}, and $\lambda(F_i)$ is called a \emph{facet vector} of $F_i$. For a characteristic function $\lambda \colon \cF(P) \rightarrow \Z^n$ and a face $F$ of $P$, we denote by $T(F)$ the subgroup of $T^n$ corresponding to the unimodular subspace of $\Z^n$ spanned by $\lambda(F_{i_1}), \ldots, \lambda(F_{i_\alpha})$, where $F=F_{i_1} \cap \cdots \cap F_{i_\alpha}$.

    Given a characteristic function $\lambda$ on $P$, we construct a manifold
    \begin{equation}\label{construction:quasitoric}
        M(\lambda) := T^n \times P /\sim,
    \end{equation}
    where $(t,p) \sim (s,q)$ if and only if $p=q$ and $t^{-1}s \in T(F(p))$, where $F(p)$ is
    the face of $P$ which contains $p \in P$ in its relative interior.
    The standard $T^n$-action on $T^n$ induces a free action of $T^n$ on $T^n \times P$, which descends to an effective
    action on $M(\lambda)$ whose orbit space is $P$. Since this action is locally standard, $M(\lambda)$ is indeed a quasitoric manifold over $P$.

    Two quasitoric manifolds $M_1$ and $M_2$ over $P$ are said to be \emph{equivalent} if there is a $\theta$-equivariant homeomorphism $f\colon M_1 \rightarrow M_2$, i.e. $f(gm)=\theta(g)f(m)$ for $g \in T^n$ and $m \in M_1$, which covers the identity map on $P$ for some automorphism $\theta$ of $T^n$. It is obvious from the definition of the equivalence that $M(\lambda_1)$ and $M(\lambda_2)$ are equivalent if there is an automorphism $\sigma \in \aut(\Z^n)=\GL(\Z, n)$ such that $\lambda_1 = \sigma \circ \lambda_2$. By Davis and Januszkiewicz \cite{DJ}, every quasitoric manifold is represented by a pair of $P$ and $\lambda$ up to equivalence.

    Note that one may assign an $n \times d$ matrix $\Lambda$ to a characteristic function $\lambda$ by
    $$
        \Lambda = ( \lambda(F_1) \cdots \lambda(F_d) ) = (A|B),
    $$
    where $A$ is an $n \times n$ matrix and $B$ is an $n \times (d-n)$ matrix. We call $\Lambda$ a \emph{characteristic matrix}. By additionally setting $F_1 \cap \cdots \cap F_n \neq \emptyset$, we may assume that the matrix $A=(\lambda(F_1), \ldots, \lambda(F_n))$ is invertible from the nonsingularity condition \eqref{non-singularity}. Moreover, the inverse $A^{-1}$ belongs to $\GL(\Z,n)$. Thus, up to equivalence, the corresponding matrix $\Lambda$ can be represented by $(E_n | A^{-1}B)$, where $E_n$ is the identity matrix of size~$n$.

    \begin{remark}\label{rmk:sign}
        Let $\Lambda$ be the above characteristic matrix corresponding to a quasitoric manifold $M$. If we let
        $$D_{k,n}:=\mathrm{diag}(1,\ldots,1,-1,1,\ldots,1)$$
        be the diagonal $n \times n$ matrix whose $k$-th diagonal entry is $-1$ and the others are $1$, then the matrix $D_{k,n}\Lambda D_{\ell,d}$ is the matrix obtained from $\Lambda$ by changing the signs of $k$-th row and $\ell$-th column, where $1 \leq k \leq n$ and $1 \leq \ell \leq d$.
        Since two vectors $\lambda(F_i)$ and $-\lambda(F_i)$ determine the same circle subgroup of $T^n$, the sign of a facet vector does not affect the corresponding quasitoric manifold from the construction~\eqref{construction:quasitoric}. Thus $\Lambda D_{\ell,d}$ is still a characteristic matrix corresponding to $M$. Hence $D_{k,n}\Lambda D_{\ell,d}$ can also be a characteristic matrix corresponding to $M$, up to equivalence, because $D_{k,n} \in \GL(\Z,n)$.
    \end{remark}

    Let $\Z[v_1, \ldots, v_d]$ denote the polynomial ring in $d$ variables over $\Z$ with $\deg v_i = 2$. We identify each $F_i \in \cF(P)$ with the indeterminate $v_i$. The \emph{face ring} (or \emph{Stanley-Reisner ring}) $\Z(P)$ of $P$ is the quotient ring $$\Z(P) =\Z[v_1, \ldots, v_d]/I_P,$$
    where $I_P$ is the ideal generated by the monomials $v_{i_1}\cdots v_{i_\ell}$
    with $F_{i_1}\cap\cdots\cap F_{i_\ell}=\emptyset$.

    Let $M$ be a quasitoric manifold over $P$ with projection $\pi\colon M \to P$ and the characteristic function $\lambda$. Then one can find an isomorphism between $\Z(P)$ and the equivariant cohomology ring of $M$ with $\Z$ coefficients:
    $$
     H_T^\ast (M)\cong \Z[v_1, \ldots, v_d]/I_P = \Z(P),
    $$
    where $v_j$ is the equivariant Poincar\'e dual of the codimension two invariant submanifold $M_j = \pi^{-1}(F_j)$ in $M$. Note that $H_T^{\ast}(M)$ is not only a ring but also a $H^{\ast}(BT) = \Z[t_1,\ldots,t_n]$-module via the map $p^\ast$, where $p\colon ET \times_T M \to BT$ is the natural projection, and $p^{\ast}$ takes $t_i$ to $\theta_i:=\lambda_{i1}v_1 + \cdots + \lambda_{id}v_d \in \Z(P)$, where $\lambda(F_i)= (\lambda_{1i} , \ldots, \lambda_{ni})^T \in \Z^n$ for $i=1,\ldots, n$.
    Since everything has vanishing odd degrees,
    $H_T^{\ast}(M)$ is a free $H^{\ast}(BT)$-module. Hence the kernel of $\Z(P) = H_T^{\ast}(M) \rightarrow H^{\ast}(M)$ is the ideal $J_\lambda$ of $\Z(P)$ generated by $\theta_1, \ldots, \theta_n$. Therefore, we have
    \begin{equation}\label{eqn:cohomology ring of quasitoric manifold}
        H^{\ast}(M) = \Z[v_1, \ldots, v_d]/(I_P +J_\lambda).
    \end{equation}
    See \cite{DJ} for more details of the previous argument.

    Let $P$ be an $n$-dimensional simple polytope with $d$ facets. Davis and Januszkiewicz \cite{DJ} constructed a $T^d$-manifold $\mathcal{Z}_P$ that is now called the
    \emph{moment angle manifold} of $P$. Let $
    \cF(P)
    =\{F_1, \ldots, F_d\}$ be the set of facets of $P$. For each facet $F_i$ let $T_{F_i}$ denote the one-dimensional coordinate subgroup of $T^{\cF(P)} \cong T^d$ corresponding to $F_i$.
    We assign to every face $F = F_{i_1} \cap \cdots \cap F_{i_\ell}$ the coordinate subtorus
    $$
        T_F = \prod_{j=1}^\ell T_{F_{i_j}} \subset T^d.
    $$ Then the
    moment angle manifold of $P$
    can be constructed as follows:
    $$
        \mathcal{Z}_P=T^d \times P^n / \sim,
    $$ where $(t_1,p)\sim (t_2,q)$ if and only if $p=q$ and $t_1t_2^{-1} \in T_{F(p)}$. From the definition of $\mathcal{Z}_P$, we can
     see easily
     that $\mathcal{Z}_{P_1 \times P_2}=\mathcal{Z}_{P_1} \times \mathcal{Z}_{P_2}$ for any simple polytopes $P_1$ and $P_2$.

    \begin{example}
        It is not so hard to see that the moment angle manifold $\mathcal{Z}_{\Delta^n}$ of an $n$-simplex is homeomorphic to a sphere $S^{2n+1}$, and, hence, $\mathcal{Z}_{\Delta^n \times \Delta^m}=S^{2n+1} \times S^{2m+1}$.
    \end{example}

    Let us fix a characteristic function $\lambda$ on $P$, and let $M(\lambda)$ be the quasitoric manifold as constructed in \eqref{construction:quasitoric}.
    Note that there is a natural identification $\psi_k\colon \Z^k \to \Hom(S^1,T^k)$ given by $(a_1, \ldots, a_k) \mapsto (t \mapsto (t^{a_1},\ldots,t^{a_k}))$ for any positive integer $k$.\linebreak Hence the characteristic matrix $\Lambda$ corresponding to $\lambda$ induces a surjective homomorphism $\overline{\lambda}\colon T^d \rightarrow T^n$\label{lambda} by $\overline{\lambda}(\psi_d(\mathbf{e}_i)(t))=\psi_n(\lambda(F_i))(t)$ for $t \in S^1$, where $\mathbf{e}_i$ is the standard $i$-th basis vector of $\Z^d$ for $i=1,\ldots,d$.
    Then $\ker(\overline{\lambda})$ is a $(d-n)$-dimensional subtorus of $T^{d}$. From the non-singularity condition \eqref{non-singularity}, $\ker(\overline{\lambda})$ meets every isotropy subgroup at the unit. Thus $\ker(\overline{\lambda})$ acts freely on $\mathcal{Z}_P$, and the map $$(\overline{\lambda},id) \colon T^d \times P^n \to T^n \times P^n$$ induces a principal $T^{d-n}$-bundle $\mathcal{Z}_P$ over $M(\lambda)$. We thus have the following proposition.

    \begin{proposition}\cite[Proposition 6.5]{BP}
        The subtorus $\ker(\overline{\lambda})$ acts freely on $\mathcal{Z}_P$, thereby defining a principal $T^{d-n}$-bundle $\mathcal{Z}_P \rightarrow M(\lambda)$.
    \end{proposition}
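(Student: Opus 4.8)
The plan is to analyse the $T^d$-action on $\mathcal{Z}_P$ directly, show that its restriction to $\ker(\overline{\lambda})$ is free by using the non-singularity condition, and then identify the orbit space with $M(\lambda)$ by chasing the two defining equivalence relations.

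First I would pin down the isotropy subgroups of the $T^d$-action $g\cdot[t,p]=[gt,p]$ on $\mathcal{Z}_P$. If $[t,p]\in\mathcal{Z}_P$ with $p$ in the relative interior of a face $F=F_{i_1}\cap\cdots\cap F_{i_\ell}$, then $g\cdot[t,p]=[t,p]$ iff $g\in T_{F(p)}$, so the stabilizer of $[t,p]$ is exactly the coordinate subtorus $T_F=T_{F_{i_1}}\times\cdots\times T_{F_{i_\ell}}$; hence its stabilizer under $\ker(\overline{\lambda})$ is $\ker(\overline{\lambda})\cap T_F$. Now the composite $T_F\hookrightarrow T^d\xrightarrow{\ \overline{\lambda}\ }T^n$ is, on the level of one-parameter subgroup lattices, the homomorphism $\Z^\ell\to\Z^n$ sending the $j$-th basis vector to $\lambda(F_{i_j})$. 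By the non-singularity condition \eqref{non-singularity}, $\lambda(F_{i_1}),\ldots,\lambda(F_{i_\ell})$ form part of an integral basis of $\Z^n$, so this lattice map is a split monomorphism and therefore $\overline{\lambda}|_{T_F}$ is injective. Thus $\ker(\overline{\lambda})\cap T_F=\{1\}$ for every face $F$, and since every point of $\mathcal{Z}_P$ has stabilizer of this form, $\ker(\overline{\lambda})$ acts freely.

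Next I would identify the orbit space. The map $(\overline{\lambda},\mathrm{id})\colon T^d\times P\to T^n\times P$ descends to a continuous surjection $f\colon\mathcal{Z}_P\to M(\lambda)$: if $(t_1,p)\sim(t_2,p)$ in $\mathcal{Z}_P$, then $t_1t_2^{-1}\in T_{F(p)}$, hence $\overline{\lambda}(t_1)\overline{\lambda}(t_2)^{-1}=\overline{\lambda}(t_1t_2^{-1})\in\overline{\lambda}(T_{F(p)})=T(F(p))$, the last equality being the definition of $T(F)$, so $(\overline{\lambda}(t_1),p)\sim(\overline{\lambda}(t_2),p)$ in $M(\lambda)$; surjectivity is clear since $\overline{\lambda}$ is onto. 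This $f$ is plainly $\ker(\overline{\lambda})$-invariant. Conversely, if $f([t_1,p])=f([t_2,p])$, then $\overline{\lambda}(t_1t_2^{-1})\in T(F(p))=\overline{\lambda}(T_{F(p)})$, so $t_1t_2^{-1}=sg$ with $s\in T_{F(p)}$ and $g\in\ker(\overline{\lambda})$; using that $T^d$ is abelian, $[t_1,p]=[t_1s^{-1},p]=[gt_2,p]=g\cdot[t_2,p]$. Hence the fibres of $f$ are precisely the $\ker(\overline{\lambda})$-orbits, and $f$ induces a homeomorphism $\mathcal{Z}_P/\ker(\overline{\lambda})\xrightarrow{\ \cong\ }M(\lambda)$.

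Finally, since $\mathcal{Z}_P$ is a smooth closed manifold and the compact torus $\ker(\overline{\lambda})\cong T^{d-n}$ acts smoothly and freely on it, the quotient map $\mathcal{Z}_P\to\mathcal{Z}_P/\ker(\overline{\lambda})$ is a principal $T^{d-n}$-bundle by the slice theorem; composing with the identification above yields the asserted principal bundle $\mathcal{Z}_P\to M(\lambda)$. The only genuinely non-formal input is the appeal to the non-singularity condition in the freeness step, which I expect to be the crux; the remainder is routine manipulation of the two defining equivalence relations together with standard facts about free actions of compact Lie groups.
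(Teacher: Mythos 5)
Your proof is correct and follows the same line as the paper's own (sketched) argument preceding the proposition: the isotropy subgroups of the $T^d$-action are the coordinate subtori $T_{F}$, the non-singularity condition forces $\ker(\overline{\lambda})\cap T_F=\{1\}$ so the action is free, and the map $(\overline{\lambda},\mathrm{id})$ identifies the quotient with $M(\lambda)$. You have simply filled in the routine details (the split-monomorphism argument on lattices and the orbit-space identification) that the paper delegates to the cited reference.
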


    Let $M(\lambda_1)$ and $M(\lambda_2)$ be two quasitoric manifolds over a simple polytope $P$. If a self map $\varphi$ of the moment angle manifold $\mathcal{Z}_P$ is $\theta$-equivariant, i.e. there exists an isomorphism $\theta\colon \ker(\overline{\lambda_1}) \rightarrow \ker(\overline{\lambda_2})$ such that $\varphi(t\cdot x)=\theta(t)\cdot \varphi(x)$ for all $t \in \ker(\overline{\lambda_1})$ and $x \in \mathcal{Z}_P$, then there is a natural induced map $\overline{\varphi}$ from $M(\lambda_1)$ to $M(\lambda_2)$:
    \begin{displaymath}
    \xymatrix{
    \mathcal{Z}_P \ar[r]^{\varphi} \ar[d]_{/\ker(\overline{\lambda_1})} & \mathcal{Z}_P \ar[d]^{/\ker(\overline{\lambda_2})} \\
    M(\lambda_1)  \ar[r]_{\overline{\varphi}} & M(\lambda_2)}
    \end{displaymath}
    Thus if we construct a $\theta$-equivariant homeomorphism $\varphi$ from the moment angle manifold $\mathcal{Z}_P$ to itself,  then the induced map $\overline{\varphi}$ is a homeomorphism from $M(\lambda_1)$ to $M(\lambda_2)$.

\section{Quasitoric manifolds with $\betti_2=2$}\label{sec:quasitoric manifolds of $b_2=2$}
    The main interest of the present paper is focused on quasitoric manifolds with the second Betti number $\betti_2=2$. Let $P$ be an $\ell$-dimensional simple polytope with $d$ facets, and let $M$ be a quasitoric manifold over $P$ with the characteristic function $\lambda$. Since $J_\lambda$ consists of $\ell$ linear combinations of $v_1, \ldots, v_d$ and $I_P$ does not contain a linear combination in \eqref{eqn:cohomology ring of quasitoric manifold}, we can see that the second Betti number of $M$ is $d-\ell$. Thus if $P$ supports a quasitoric manifold with $\betti_2=2$, then it has exactly $\ell+2$ facets, and hence $P$ is combinatorially equivalent to a product of two simplices as is well-known, see
    chapter 6 in~\cite{G}. Therefore we may assume that $P = \Delta^n \times \Delta^m$.

    Now consider a quasitoric manifold $M$ of dimension $2(n+m)$
    over $\Delta^n \times \Delta^m$. Consider the facets of $\Delta^n \times \Delta^m$ in the following order: $F_1 \times \Delta^m$, $\ldots$, $F_n \times \Delta^m$, $\Delta^n \times G_1$, $\ldots$, $\Delta^n \times G_m$, $F_{n+1} \times \Delta^m$, $\Delta^n \times G_{m+1}$, where $F_i$'s are the facets of $\Delta^n$ and $G_j$'s are the facets of $\Delta^m$. Then the first $(n+m)$ facets meet at a vertex. Thus, by Remark~\ref{rmk:sign}, the characteristic matrix $\Lambda$ corresponding to $M$ is of the form
    \begin{equation}\label{characteristic matrix}
        \Lambda = (E_{n+m} | \Lambda_\ast) = \left(\begin{array}{llllllcc}
                        1&&&&&&-1&-b_1\\
                        &\ddots&&&0&&\vdots &\vdots\\
                        &&1&&&&-1&-b_n\\
                        &&&1&&&-a_{1}&-1\\
                        &0&&&\ddots&&\vdots &\vdots\\
                        &&&&&1&-a_{m}&-1\\
                        \end{array}\right)
    \end{equation}
    up to equivalence, where $1-a_jb_i=\pm 1$ for $i=1,\ldots,n$ and $j=1,\ldots,m$ because of the non-singularity condition \eqref{non-singularity} of the characteristic function. From now on we denote such $M$ by $M_{\va,\bb}$\label{M} for $\va=(a_1,\ldots,a_m)$ and $\bb=(b_1,\ldots,b_n)$.
    Hence, from~\eqref{eqn:cohomology ring of quasitoric manifold},
    the cohomology ring of $M_{\va,\bb}$ with $\Z$ coefficients is
    \begin{equation}\label{cohomology ring}
     H^{\ast}(M_{\va,\bb})
        =\Z[x_1,x_2]/ \langle x_1\prod_{i=1}^{n}(x_1+b_ix_2),\,x_2\prod_{j=1}^{m}(a_{j}x_1+x_2) \rangle.
    \end{equation}

    A (complex) \emph{generalized Bott tower} of height $h$, or an \emph{$h$-stage generalized Bott tower}, is a sequence
$$
        B_h\stackrel{\pi_h}\longrightarrow B_{h-1} \stackrel{\pi_{h-1}}\longrightarrow
        \dots \stackrel{\pi_2}\longrightarrow B_1 \stackrel{\pi_1}\longrightarrow
        B_0=\{\text{a point}\}
$$
    of manifolds $B_i = P(\underline{\C}\oplus \bigoplus_{j=1}^{\ell_i} \xi_{i, j})$, where $\xi_{i,j}$ is a complex line bundle, $\underline{\C}$ is the trivial complex line bundle over $B_{i-1}$ for each $i=1, \ldots, h$, and $P(\cdot)$ stands for the projectivization. We call \emph{$B_i$ the $i$-stage generalized Bott manifold}.

    Note that the Whitney sum of $\ell$ complex line bundles admits
    a canonical $T^\ell$-action. Assume $B_{j-1}$ admits an effective $T^{\sum_{k=1}^{j-1} \ell_k}$-action. Since $H^1(B_{j-1})=0$, it lifts to an action on $\xi_i$, see \cite{HY}. Moreover, it commutes with the canonical $T^{\ell_i}$-action on $\xi_i$, and hence, it induces an effective $T^{\sum_{k=1}^{j} \ell_k}$-action on $B_j$. Thus, we can define an effective half-dimensional torus action on $B_h$ inductively. One can show that this action is locally standard and its orbit space is a product of $h$ simplices $\prod_{i=1}^h \Delta^{\ell_i}$. Thus a two-stage generalized Bott manifold is a quasitoric manifold over $P=\Delta^{\ell_1}\times \Delta^{\ell_2}$ and has $\betti_2=2$.

    \begin{remark}
    In fact, a generalized Bott manifold is not only a quasitoric manifold but also a (projective) toric manifold. Note that all toric manifolds admit $T^n$-invariant complex structures. Hence, by \cite[Theorem 6.4]{ch-ma-su08}, all toric manifolds over a product of simplices are generalized Bott manifolds.
    \end{remark}

    We already know a necessary and sufficient condition for a quasitoric manifold $M$ to be equivalent to a generalized Bott manifold by the following proposition.

    \begin{proposition}\cite{ch-ma-su08} \label{prop:condition to be GBM}
    Let $M$ be a quasitoric manifold over $P=\prod_{i=1}^h\Delta^{\ell_i}$, and let $\Lambda_\ast$ be an $h\times h$ vector matrix associated with $M$.\footnote{In fact, $\Lambda_\ast$ is a $(\sum_{i=1}^h \ell_i) \times h$ matrix. Then $\Lambda_\ast$ can be viewed as an $h\times h$ vector matrix whose entries in the $i$-th row are vectors in $\Z^{\ell_i}$. A more precise description of (a transpose version of) $\Lambda_\ast$ is explained on page 114 in~\cite{ch-ma-su08}.} Then $M$ is equivalent to a generalized Bott manifold if and only if $\Lambda_\ast$ is conjugate to an $h\times h$ lower triangular vector matrix.
    \end{proposition}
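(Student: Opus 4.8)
The plan is to set up a precise dictionary between the combinatorics of the vector matrix $\Lambda_\ast$ and the bundle structure of a generalized Bott tower, and then read the equivalence off both directions of that dictionary. First I would fix conventions. Write $d=\sum_{i=1}^h\ell_i$ and order the facets of $P=\prod_i\Delta^{\ell_i}$ so that, as in \eqref{characteristic matrix}, the characteristic matrix is $(E_d\mid\Lambda_\ast)$; here $\Lambda_\ast$ is partitioned into $h$ row blocks, the $i$-th having $\ell_i$ rows, and I regard it as an $h\times h$ matrix whose $(i,j)$ entry $\Lambda_{ij}$ is the vector in $\Z^{\ell_i}$ sitting at the intersection of the $i$-th row block with the $j$-th column. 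The operations on $(E_d\mid\Lambda_\ast)$ that do not change the equivalence class of $M$ are: simultaneous $\GL(\Z,\ell_i)$-changes inside each row block of $E_d$ and $\Lambda_\ast$ (these fix $P$), column sign changes (Remark~\ref{rmk:sign}), and the block column operations coming from re-choosing which facet of $\Delta^{\ell_i}$ plays the role of the "last" one; together with a relabelling of the $h$ simplex factors these generate exactly the notion of conjugacy of vector matrices used in the statement. A first observation, forced by the non-singularity condition \eqref{non-singularity} applied to a vertex lying in all but one facet of a single $\Delta^{\ell_i}$, is that each diagonal entry $\Lambda_{ii}$ has all coordinates equal to $\pm 1$, hence may be normalised to $(-1,\ldots,-1)^T$; so "lower triangular" is really a constraint only on the entries $\Lambda_{ij}$ with $j>i$.

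For the direction "$\Leftarrow$" I would induct on $h$, the case $h=1$ being $M=\CP^{\ell_1}=B_1$. Assume $\Lambda_\ast$ is lower triangular. Deleting its last row block and last column leaves a lower triangular vector matrix over $\prod_{i<h}\Delta^{\ell_i}$, which by the inductive hypothesis represents a generalized Bott manifold $B_{h-1}$; since $H^1(B_{h-1})=0$, complex line bundles over $B_{h-1}$ are classified by $H^2(B_{h-1})$, and for $k=1,\ldots,\ell_h$ the $k$-th row of the last row block (the data $\Lambda_{h,j}$ for $j<h$) determines a class in $H^2(B_{h-1})$ and hence a line bundle $\xi_{h,k}$. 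Set $B_h:=P(\underline{\C}\oplus\bigoplus_{k=1}^{\ell_h}\xi_{h,k})$. One then checks directly, either from \eqref{eqn:cohomology ring of quasitoric manifold} or from the explicit form of the characteristic matrix of a projectivised sum of line bundles, that $B_h$ with its canonical half-dimensional torus action has characteristic matrix equivalent to $(E_d\mid\Lambda_\ast)$, whence $M\cong B_h$.

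For the direction "$\Rightarrow$", suppose $M$ is equivalent to a generalized Bott manifold $B_h\to\dots\to B_0$. I would compute its characteristic matrix stage by stage: the projectivisation at stage $i$ introduces the simplex factor $\Delta^{\ell_i}$, and the first Chern classes $c_1(\xi_{i,1}),\ldots,c_1(\xi_{i,\ell_i})$ lie in $H^2(B_{i-1})$, hence are $\Z$-linear combinations of the cohomology generators coming from stages $1,\ldots,i-1$ only. Translating this back, the entries $\Lambda_{ij}$ vanish for $j>i$, while $\Lambda_{ii}$ is the standard $\CP^{\ell_i}$-vector because the fibre over $B_{i-1}$ is an untwisted $\CP^{\ell_i}$. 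So, after ordering the factors according to the stages of the tower, $\Lambda_\ast$ is lower triangular, which is precisely the assertion that $\Lambda_\ast$ is conjugate to a lower triangular vector matrix.

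The part I expect to be the main obstacle is the verification inside the "$\Leftarrow$" step that the abstract quasitoric manifold reconstructed from a lower triangular $\Lambda_\ast$ really is the projective bundle $B_h$ — that is, matching the combinatorially defined $T^d$-action on $M(\lambda)$ with the geometric fibrewise torus action on $P(\underline{\C}\oplus\bigoplus\xi_{h,k})$ — together with the bookkeeping showing that a basis change of $H^2(B_{i-1})$ or a common twist of all the $\xi_{i,k}$ by a single line bundle corresponds exactly to the admissible column operations on $\Lambda_\ast$, so that the construction of "$\Leftarrow$" and the reading-off of "$\Rightarrow$" descend to mutually inverse maps on equivalence classes. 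Everything else is linear algebra over $\Z$ combined with the standard description of the characteristic data of a projectivised Whitney sum of line bundles over a quasitoric manifold.
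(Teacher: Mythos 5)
This proposition is quoted from \cite{ch-ma-su08}; the paper gives no proof of it, so there is nothing internal to compare your argument against. Your outline is, in substance, the proof from the cited reference: non-singularity at the vertices omitting one facet of a single simplex factor normalizes the diagonal blocks to $(-1,\ldots,-1)^T$; a lower-triangular $\Lambda_\ast$ is unwound inductively into a tower of projectivized sums of line bundles; and conversely the characteristic data of a generalized Bott tower is lower triangular because the bundles introduced at stage $i$ are pulled back from $B_{i-1}$. The step you yourself flag as the main obstacle --- identifying $M(\lambda)$ for lower-triangular $\Lambda_\ast$ with $P(\underline{\C}\oplus\bigoplus_{k}\xi_{h,k})\to B_{h-1}$ by a weakly equivariant homeomorphism --- is indeed the technical core, and it is precisely the content of the key lemma of \cite{ch-ma-su08} (there carried out by realizing both sides as quotients of $\mathcal{Z}_P$ by a free subtorus action, in the spirit of Remark~\ref{free action}); your proposal correctly reduces the proposition to that verification but does not supply it. One small imprecision: ``conjugate'' in the statement means simultaneous permutation of the row blocks and columns of the vector matrix (relabelling the simplex factors), whereas the $\GL(\Z,\ell_i)$ changes, sign changes and re-choices of distinguished facet that you also list are normalizations within a single equivalence class rather than part of the conjugacy; nothing in your argument actually depends on conflating the two.
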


    Moreover, the following theorem gives a smooth classification of two-stage generalized Bott manifolds.

    \begin{theorem}\cite{CMS}\label{thm:classification of CMS}
        Let $B_2=P(\oplus_{i=0}^m\gamma^{u_i})$ and $B_2^\prime=P(\oplus_{i=0}^m\gamma^{u_i^\prime})$, where $u_0=u_0'=0$ and $\gamma^{u_i}$ denotes the complex line bundle over $B_1=\CP^n$ whose first Chern class is $u_i \in H^2(B_1)$. Then the following are equivalent.
        \begin{enumerate}
            \item There exists $\epsilon=\pm1$ and $w \in H^2(B_1)$ such that
                $$\prod_{i=0}^m(1+\epsilon(u_i^\prime+w))=\prod_{i=0}^m(1+u_i) \mbox{ in } H^\ast(B_1).$$
            \item $B_2$ and $B_2^\prime$ are diffeomorphic.
            \item $H^\ast(B_2)$ and $H^\ast(B_2^\prime)$ are isomorphic as graded rings.
        \end{enumerate}
    \end{theorem}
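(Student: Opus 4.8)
The plan is to establish the cycle of implications $(2)\Rightarrow(3)\Rightarrow(1)\Rightarrow(2)$; since $(2)\Rightarrow(3)$ is trivial, the work is in $(1)\Rightarrow(2)$ and $(3)\Rightarrow(1)$. Throughout I would fix generators $H^*(B_1)=H^*(\CP^n)=\Z[x]/(x^{n+1})$, write $u_i=c_ix$ with $c_i\in\Z$ and $c_0=0$, and use the standard presentation of the cohomology of a projectivized sum of line bundles,
\begin{equation*}
 H^*(B_2)=\Z[x,y]\big/\big(x^{n+1},\ \textstyle\prod_{i=0}^{m}(y+c_ix)\big),\qquad \deg x=\deg y=2,
\end{equation*}
and similarly $H^*(B_2')=\Z[x',y']\big/\big(x'^{\,n+1},\textstyle\prod_{i=0}^{m}(y'+c_i'x')\big)$, which agrees with \eqref{cohomology ring} when the base twisting $\bb$ is zero.

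For $(1)\Rightarrow(2)$ I would first remove $\epsilon$ and $w$ geometrically. Since $P(E)\cong P(E\otimes L)$ for any line bundle $L$, and $P(E)$ is canonically diffeomorphic to $P(\overline E)$, the bundle $E'':=\bigoplus_{i=0}^m\gamma^{\epsilon(u_i'+w)}$ has $P(E'')\cong B_2'$, and condition (1) says precisely that the total Chern class of $E''$ equals that of $\bigoplus_{i}\gamma^{u_i}$ in $H^*(\CP^n)$. Hence it is enough to prove: two sums of $m+1$ line bundles over $\CP^n$ with equal total Chern class have diffeomorphic projectivizations. If $m<n$ the total Chern class is a polynomial of degree $m+1\le n$ with no truncation, so it determines the two multisets $\{c_i\}$ and $\{c_i'\}$ via the fundamental theorem of symmetric functions; then the bundles are isomorphic and we are done. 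If $m\ge n$ the symmetric functions $e_{n+1},\dots,e_{m+1}$ are invisible in $H^*(\CP^n)$ and one has to exhibit a diffeomorphism directly: here I would use the moment-angle machinery of Section~\ref{Preliminaries}, where $\mathcal Z_{\Delta^n\times\Delta^m}=S^{2n+1}\times S^{2m+1}$, connecting $\{c_i\}$ to $\{c_i'\}$ by a chain of elementary moves that change only the invisible $e_k$'s, each realized by an explicit $\theta$-equivariant self-homeomorphism of $S^{2n+1}\times S^{2m+1}$ (for a suitable isomorphism $\theta$ of the two rank-$2$ kernel subtori), which then descends to a homeomorphism $B_2\to B_2'$. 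The prototype to keep in mind is the classical diffeomorphism $\mathbb F_0\cong\mathbb F_2$ of Hirzebruch surfaces, which is not equivariant.

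For $(3)\Rightarrow(1)$, let $\phi\colon H^*(B_2)\to H^*(B_2')$ be a graded ring isomorphism and set $\psi:=\phi^{-1}\colon H^*(B_2')\to H^*(B_2)$, which restricts to an isomorphism on $H^2$, say $\psi(x')=px+qy$, $\psi(y')=rx+sy$ with $ps-qr=\pm1$. Assume first $m>n$. Then the rational line $\Q x\subseteq H^2(B_2;\Q)$ is intrinsically singled out: it is the unique line $\ell$ for which $\dim_\Q\big(H^*(B_2;\Q)/(\ell)\big)$ equals $m+1$, every other line giving $n+1<m+1$ (a direct computation obtained by setting $\ell$ equal to zero in the presentation, using $c_0=0$). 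The same holds for $\Q x'$ in $H^*(B_2')$, so $\psi(\Q x')=\Q x$, hence $\psi(x')=\pm x$, which forces $q=0$, $p=\pm1$, and then $s=\pm1$. Applying $\psi$ to the relation $\prod_i(y'+c_i'x')=0$ produces a monic polynomial of $y$-degree $m+1$ lying in $\big(x^{n+1},\prod_i(y+c_ix)\big)$; subtracting $\prod_i(y+c_ix)$ and using monic division in $y$ over $\Z[x]$, the congruence reduces to $\prod_{i=0}^m\big(1+\epsilon(c_i'+v)x\big)=\prod_{i=0}^m(1+c_ix)$ in $H^*(\CP^n)$, where $\epsilon=ps=\pm1$ and $v=pr$; this is exactly condition (1) with $w=vx$. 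When $m\le n$ the line $\Q x$ is no longer distinguished --- for $m=n$ and all $c_i=0$ the ring $H^*(\CP^n\times\CP^n)$ admits the interchange of the two generators, which is the origin of the shift $w$ in (1) --- and I would instead carry out a finer analysis of the admissible matrices $\left(\begin{smallmatrix}p&q\\r&s\end{smallmatrix}\right)$, again using the rational quotient dimensions together with the standard monomial basis $\{x'^{a}y'^{b}\}_{0\le a\le n,\,0\le b\le m}$, to read $\epsilon$ and $w$ off the image of the second relation.

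The main obstacle is the case $m\ge n$ of $(1)\Rightarrow(2)$: the required diffeomorphisms are genuinely non-equivariant --- they are not produced by the sign-change and column operations of Remark~\ref{rmk:sign} on characteristic matrices, as the example $\mathbb F_0\cong\mathbb F_2$ shows --- so the moment-angle self-homeomorphism construction of Section~\ref{Preliminaries} is essential rather than the purely combinatorial toric picture. A secondary difficulty is the $m=n$ case of $(3)\Rightarrow(1)$, where the cohomology ring carries the extra ``diagonal'' symmetry that must be matched carefully against the parameters $\epsilon$ and $w$ in~(1).
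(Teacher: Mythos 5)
First, note that the paper does not prove this theorem at all: it is imported verbatim from \cite{CMS}, so there is no in-paper argument to compare yours against. Judged on its own terms, your proposal has one genuine gap, in the implication $(1)\Rightarrow(2)$ for $m\ge n$. Your reduction to ``two Whitney sums of $m+1$ line bundles over $\CP^n$ with equal total Chern class have diffeomorphic projectivizations'' is correct, and the case $m<n$ is fine, but for $m\ge n$ you only promise ``a chain of elementary moves that change the invisible $e_k$'s, each realized by an explicit $\theta$-equivariant self-homeomorphism of $S^{2n+1}\times S^{2m+1}$.'' You neither exhibit these moves nor argue that any two multisets with the same truncated total Chern class are connected by them; the explicit maps of this type that appear in the paper (Lemmas \ref{lem:class a=pm2} and \ref{lem:NGB}) only handle very special parameter vectors, and there is no reason to expect such ad hoc quaternionic tricks to cover, say, arbitrary $\va,\va'\in\Z^m$ with $\prod(1+a_ix)=\prod(1+a_i'x)$ in $\Z[x]/(x^{n+1})$. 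Moreover, even if carried out, this route produces homeomorphisms, whereas the theorem asserts diffeomorphisms. The standard (and the \cite{CMS}) route is bundle-theoretic: equal total Chern classes give equal Chern characters, hence equal classes in the torsion-free group $K(\CP^n)$, hence stable isomorphism of the two sums of line bundles; since in the remaining case the rank $m+1$ exceeds $\dim_{\C}\CP^n=n$, stable isomorphism implies isomorphism, and isomorphic bundles have diffeomorphic projectivizations. This closes the hard case in a few lines and makes the moment-angle machinery unnecessary for this theorem.

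Your sketch of $(3)\Rightarrow(1)$ is sound in the regime $m>n$: the line $\Q x$ is indeed characterized as the unique $\ell\subset H^2(B_2;\Q)$ with $\dim_\Q H^*(B_2;\Q)/(\ell)=m+1$ (all other lines give $n+1$), so any graded isomorphism sends $x'$ to $\pm x$ and the rest is the computation you describe. But for $m\le n$ you only gesture at ``a finer analysis of the admissible matrices.'' The cleaner statement is that for $m<n$ the distinguished lines (those maximizing the quotient dimension, now equal to $n+1$) are exactly the $\Q(y+c_jx)$ attached to the line-bundle factors, so the isomorphism permutes the factors up to a common shift $w$ and a sign $\epsilon$ --- which is precisely condition (1); the case $m=n$ needs both families of distinguished lines tracked simultaneously and is where the real bookkeeping lies. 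As written, that half of your argument is an outline rather than a proof.
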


    When a quasitoric manifold  $M$ is equivalent to a two-stage generalized Bott manifold,
    we may assume that  $M=M_{\va,\mathbf{0}}$.
    In this case, $M$ is a $\CP^m$-bundle over $\CP^n$, and $H^{\ast}(M_{\va,\mathbf{0}})$ is of the form
    \begin{equation}\label{cohomology ring of GB}
        H^{\ast}(M_{\va,\mathbf{0}})
        =\Z[x_1,x_2]/ \langle {x_1}^{n+1},\,x_2\prod_{j=1}^{m}(a_{j}x_1+x_2) \rangle.
    \end{equation}

    If a quasitoric manifold $M$ with $\betti_2=2$ is not equivalent to a generalized Bott manifold, then we may assume that $M=M_{\va,\bb}$ for some nonzero vectors $\va$ and $\bb$
    by Proposition~\ref{prop:condition to be GBM}.
    Then $a_jb_i=2$ for some $i$ and $j$.
     Without loss of generality, we may assume that $a_{j}$ is $0$ or $\pm2$ and $b_i$ is $0$ or $\pm 1$.
    Note that the signs of nonzero $a_j$'s and $b_i$'s are the same and, by Remark~\ref{rmk:sign}, $M_{\va,\bb}$ is equivalent to $M_{-\va,-\bb}$.\footnote{We can see easily by the following steps; 1) change the signs of the first $n$ row vectors of the characteristic matrix~\eqref{characteristic matrix}, 2) change the signs of the first $n$ column vectors and the $(n+m+2)$-nd of the resulting matrix. Then we can obtain the characteristic matrix corresponding to $M_{-\va,-\bb}$.} Hence, we may assume that the nonzero $a_j$ is $2$, and the nonzero $b_i$ is $1$. Now let $s$ be the number of $a_{j}=2$ for $j=1,\ldots,m$ and $r$ the number of $b_i=1$ for $i=1,\ldots,n$.
    Then, the cohomology ring of $M_{\va,\bb}$ is isomorphic to
    \begin{equation}\label{cohomology ring of NGB}
        \Z[x_1,x_2]/\langle x_1^{n+1-r}(x_1 + x_2)^{r},\,x_2^{m+1-s}(2x_1+x_2)^{s}\rangle
    \end{equation}
    for some $1 \leq r \leq n$ and $1 \leq s \leq m$.

    We close this section by giving another construction of quasitoric manifolds over $\Delta^n \times \Delta^m$ from the moment angle manifold $\mathcal{Z}_{\Delta^n \times \Delta^m}$.
    \begin{remark}\label{free action}
        Note that the moment angle manifold $\mathcal{Z}_{\Delta^n \times \Delta^m}$ is
        $$S^{2n+1}\times S^{2m+1}=\{(\vw,\vz) \in \C^{n+1} \times \C^{m+1} \colon |\vw|=1,~|\vz|=1\},$$
        which has the standard $T^{n+m+2}$-action of the componentwise complex multiplication. Let $\lambda$ be a characteristic function corresponding to $M_{\va,\bb}$,
        and let $K_{\va,\bb}$ be the image of the homomorphism $\mu: T^2 \to T^{n+m+2}$ defined by
        \begin{equation}\label{eqn:homomorphism}
            \left(\begin{array}{cc}1&b_1\\\vdots&\vdots\\1&b_n\\1&0\\a_1&1\\\vdots&\vdots\\a_m&1\\0&1\end{array}\right).
        \end{equation}
        Then the action of the two-torus $K_{\va,\bb}$ on $S^{2n+1}\times S^{2m+1}$ defined by
        \begin{equation*}
        \begin{split}
            &\mu(t_1,t_2)\cdot ((w_1,\ldots,w_{n+1}),(z_1,\ldots,z_{m+1}))\\
            &=((t_1t_2^{b_1}w_1,\ldots,t_1t_2^{b_n}w_n,t_1w_{n+1}),(t_1^{a_{1}}t_2z_1,\ldots,t_1^{a_{m}}t_2z_m,t_2z_{m+1}))
        \end{split}
        \end{equation*}
        is free because of the non-singularity condition \eqref{non-singularity} of $\lambda$. Moreover,
        this action is exactly equal to the $\ker(\overline{\lambda})$-action on $\mathcal{Z}_{\Delta^n \times \Delta^m}$, where a homomorphism $\overline{\lambda}$ is defined
        on page~\pageref{lambda}, and
        the quasitoric manifold $M_{\va,\bb}$ is the orbit space $\mathcal{Z}_{\Delta^n \times \Delta^m}/K_{\va,\bb}$ with the action of $T^{n+m}$ defined by
        \begin{equation*}
        \begin{split}
            &(t_1,\ldots,t_{n+m})\cdot [(w_1,\ldots,w_{n+1}),(z_1,\ldots,z_{m+1})]\\
            &=[(t_1w_1,\ldots,t_nw_n,w_{n+1}),(t_{n+1}z_1,\ldots,t_{n+m}z_m,z_{m+1})].
        \end{split}
        \end{equation*}
        See \cite{ch-ma-su08} for more details.

        In other words, the subtorus $K_{\va,\bb} \subset T^{n+m+2}$ is represented by the unimodular subgroup of $\Z^{n+m+2}$ spanned by the two vectors $(1,\ldots,1,a_{1},\ldots,a_{m},0)$ and $(b_1,\ldots,b_n,0,1,\ldots,1)$. Note that these two vectors generate the null space of the matrix
        \begin{equation}\label{char matrix by reordering}
            \left(\begin{array}{lllclllc}
                        1&&&-1&&&&-b_1\\
                        &\ddots&&\vdots&0&& &\vdots\\
                        &&1&-1&&&&-b_n\\
                        &&&-a_{1}&1&&&-1\\
                        &0&&\vdots&&\ddots& &\vdots\\
                        &&&-a_{m}&&&1&-1\\
                        \end{array}\right)
        \end{equation}
        which is obtained from $\Lambda$ in \eqref{characteristic matrix} by changing the ordering of facets of $\Delta^n \times \Delta^m$: $F_1 \times \Delta^m$, $\ldots$, $F_n \times \Delta^m$, $F_{n+1} \times \Delta^m$, $\Delta^n \times G_1$, $\ldots$, $\Delta^n \times G_m$, $\Delta^n \times G_{m+1}$.
    \end{remark}

\section{Quasitoric manifolds equivalent to a generalized Bott manifold}\label{sec:quasitoric mfds equiv to GBM}

    Recall that the cohomological rigidity problem
    asks whether two quasitoric manifolds are homeomorphic if their cohomology rings are isomorphic. As an intermediate step toward the
    answer to
    the question for quasitoric manifolds homeomorphic to generalized Bott manifolds, we can ask the following question: \emph{is a quasitoric manifold over a product of simplices equivalent (or homeomorphic) to a generalized Bott manifold if its cohomology ring is isomorphic to that of a generalized Bott manifold?} When the orbit space is $\Delta^1\times \Delta^1$, then the answer is
    affirmative by \cite{CS}.
    Assume that a two-stage generalized Bott manifold is a $\CP^m$-bundle over $\CP^n$. In this section we answer to this question for $m>1$ case.
    For the case of $m=1$, we will show in the next section that if a quasitoric manifold $M$ has the cohomology ring of the type~\eqref{cohomology ring of GB}, then $M$ is homeomorphic (but not necessarily equivalent) to a generalized Bott manifold.

    \begin{proposition}\label{prop:GB_int}
        Let $M$ be a quasitoric manifold over  $\Delta^n \times \Delta^m$ with $m>1$. If there is a generalized Bott tower $B_2 \to \CP^n \to B_0$ such that the fiber of $B_2 \to \CP^n$ has complex dimension $m$ and $H^\ast(B_2) \cong H^\ast(M)$, then $M$ is equivalent to a generalized Bott manifold.
    \end{proposition}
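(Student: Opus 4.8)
The plan is to argue by contraposition: assuming $M$ is not equivalent to a generalized Bott manifold, I will show that no graded ring isomorphism $H^\ast(B_2)\cong H^\ast(M)$ can exist for a tower as in the statement. Under this assumption, by Proposition~\ref{prop:condition to be GBM} and the normalization leading to~\eqref{cohomology ring of NGB}, we may take $M=M_{\va,\bb}$ with
\[
H^\ast(M)=\Z[x_1,x_2]/\langle g_1,g_2\rangle,\qquad g_1=x_1^{\,n+1-r}(x_1+x_2)^r,\quad g_2=x_2^{\,m+1-s}(2x_1+x_2)^s,
\]
where $1\le r\le n$ and $1\le s\le m$; the inequalities $r\ge1$ and $s\ge1$ are exactly the extra information ``$M$ is not a generalized Bott manifold'' provides. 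On the other side, since $B_1=\CP^n$ and the fiber of $B_2\to\CP^n$ is $\CP^m$, we have $H^\ast(B_2)=\Z[y_1,y_2]/\langle y_1^{\,n+1},\ y_2\prod_{j=1}^m(y_2+c_jy_1)\rangle$ for some integers $c_1,\dots,c_m$.

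First I would upgrade a hypothetical isomorphism $\phi\colon H^\ast(B_2)\to H^\ast(M)$ to an equality of ideals. Because every defining relation has cohomological degree at least $4$, the groups $H^2(M)=\Z x_1\oplus\Z x_2$ and $H^2(B_2)=\Z y_1\oplus\Z y_2$ are relation-free, so $\xi:=\phi(y_1)$ and $\eta:=\phi(y_2)$ are linear forms and $\{\xi,\eta\}$ is a $\Z$-basis of $\Z x_1\oplus\Z x_2$. Thus $y_1\mapsto\xi$, $y_2\mapsto\eta$ extends to a ring isomorphism $\Phi\colon\Z[y_1,y_2]\to\Z[x_1,x_2]$ lifting $\phi$, and equating kernels of the two quotient maps gives
\[
\langle g_1,g_2\rangle=\Bigl\langle\,\xi^{\,n+1},\ \eta\prod_{j=1}^m(\eta+c_j\xi)\,\Bigr\rangle
\]
as ideals of $\Z[x_1,x_2]$. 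Everything from here is elementary commutative algebra in this unique factorization domain, and I would organize it by comparing the low-degree graded pieces of this ideal, distinguishing the cases $n<m$, $n=m$, $n>m$ according to which of $g_1$ (degree $n+1$) and $g_2$ (degree $m+1$) has smaller degree; since the normalization placed the entry $2$ in $g_2$ and not in $g_1$, the three cases are genuinely different.

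When $n<m$, the degree-$(n+1)$ part of the ideal is the rank-one subgroup generated both by $g_1$ and by $\xi^{\,n+1}$, so $\xi^{\,n+1}$ is a nonzero integer multiple of $g_1=x_1^{\,n+1-r}(x_1+x_2)^r$; since $1\le r\le n$, the latter has the two non-associate irreducible factors $x_1$ and $x_1+x_2$ while $\xi^{\,n+1}$ is a prime power, contradicting unique factorization. When $n=m$, the degree-$(n+1)$ part is $\Z g_1+\Z g_2$, so $\xi^{\,n+1}=\alpha g_1+\beta g_2$ for integers $\alpha,\beta$; if $\alpha$ or $\beta$ is zero the same prime-power-versus-two-distinct-factors obstruction (applied to $g_2$, respectively $g_1$) rules it out, and if both are nonzero I would write $\xi=ux_1+vx_2$ and substitute $x_2=0$, $x_1=0$, $x_2=-x_1$ in turn into the polynomial identity $\xi^{\,n+1}=\alpha g_1+\beta g_2$, obtaining $\alpha=u^{n+1}$, $\beta=v^{n+1}$ (hence $u,v\ne0$) and $\bigl((u-v)/v\bigr)^{n+1}=(-1)^{n+1-s}=\pm1$, which forces $u=2v$; then substituting $x_2=-2x_1$ sends $\xi$ and $g_2$ to $0$ and $g_1$ to $(-1)^rx_1^{\,n+1}$, giving $\alpha=0$, a contradiction.

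The case $n>m$ is the crux, and is the only place the hypothesis $m>1$ is really used --- indeed the statement is false for $m=1$ (e.g.\ $M_{2,(1,0)}$). Here $\xi^{\,n+1}=\alpha g_1+p\,g_2$ with $\alpha\in\Z$ and $p$ homogeneous of degree $n-m$. Substituting $x_2=0$ and $x_2=-2x_1$, each of which kills $g_2$, gives $\alpha=u^{n+1}$ and $(u-2v)^{n+1}=(-1)^r u^{n+1}$; a short check rules out $u=0$, so $(u-2v)/u=\pm1$ and therefore $v=0$ or $u=v$, i.e.\ $\xi$ is proportional to $x_1$ or to $x_1+x_2$. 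In either case, rearranging $\xi^{\,n+1}=\alpha g_1+p\,g_2$ and using that $g_2$ is coprime to $x_1$, to $x_1+x_2$, and to constants shows that $g_2$ divides $(x_1+x_2)^k-x_1^{\,k}$ for some $k$ with $1\le k\le n$. But $(x_1+x_2)^k-x_1^{\,k}$ vanishes along $\{x_2=0\}$ to order exactly $1$, so the factor $x_2^{\,m+1-s}$ of $g_2$ forces $m+1-s\le1$, i.e.\ $s=m$; and then $(2x_1+x_2)^m$ would have to divide $(x_1+x_2)^k-x_1^{\,k}$, which fails because a direct computation of its value and $x_2$-derivative along $\{2x_1+x_2=0\}$ shows it vanishes there to order at most $1$, whereas $m>1$. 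This exhausts every case, so no such isomorphism exists, and the contrapositive --- hence the proposition --- follows. The main obstacle is exactly this last step: pinning $\xi$ down to a linear factor of $g_1$ and then squeezing out first $s=m$ and then the final contradiction from the divisibility behaviour of $(x_1+x_2)^k-x_1^{\,k}$; everything else is short.
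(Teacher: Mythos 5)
Your overall strategy is sound and genuinely different from the paper's: instead of working with the un-normalized presentation $\Z[y_1,y_2]/\langle y_1\prod_i(y_1+d_iy_2),\,y_2\prod_j(c_jy_1+y_2)\rangle$ and proving $d_ic_j=0$ for all $i,j$ through a long case analysis (which is what the paper does), you normalize first and extract a contradiction from the single fact that some primitive linear form $\xi$ has $\xi^{n+1}$ in the defining ideal. The individual computations you perform all check out: the prime-power obstruction for $n<m$, the substitutions $x_2=0$, $x_1=0$, $x_2=-x_1$, $x_2=-2x_1$ for $n=m$, the reduction to $g_2\mid(x_1+x_2)^k-x_1^k$ for $n>m$, and the order-of-vanishing count that isolates exactly where $m>1$ is used. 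This is arguably cleaner than the paper's argument.

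There is, however, a gap in the reduction. The normalization~\eqref{cohomology ring of NGB} is obtained in Section~3 only ``without loss of generality,'' and the generality lost there is precisely a swap of the roles of the two simplex factors. For a fixed $\Delta^n\times\Delta^m$ with $n\neq m$, a quasitoric manifold not equivalent to a generalized Bott manifold is equivalent either to some $M_{\vs,\vr}$ (twos in the $\Z^m$-vector, ones in the $\Z^n$-vector), whose cohomology is the ring you wrote, or to some $M_{\vr',\vs'}$ (ones in the $\Z^m$-vector, twos in the $\Z^n$-vector), whose cohomology is $\Z[x_1,x_2]/\langle x_1^{\,n+1-k}(x_1+2x_2)^k,\ x_2^{\,m+1-l}(x_1+x_2)^l\rangle$. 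These two rings are genuinely non-isomorphic when $n\neq m$ --- that is exactly the content of Lemma~\ref{lem:non-iso cohomology} --- and the proposition must exclude both; the paper's proof does so via its sub-cases $c_{j_0}=2$ and $c_{j_0}=1$. Your case $n<m$ happens to be insensitive to which form occurs (the prime-power contradiction only needs $g_1$ to have two distinct irreducible factors), and for $n=m$ the second form reduces to the first by swapping $x_1$ and $x_2$, but your crux case $n>m$ is written only for the first form. For the second form the substitutions killing $g_2$ are $x_2=0$ and $x_2=-x_1$, one is led to $\xi=\pm x_1$ or $\pm(x_1+2x_2)$ and to $g_2\mid(x_1+2x_2)^k-x_1^k$, and the vanishing orders must be recomputed along $x_2=0$ and $x_1+x_2=0$. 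The repair is routine and parallel to what you wrote, but it has to be carried out for the proof to be complete.
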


    \begin{proof}
        From~\eqref{cohomology ring of GB}, the cohomology ring of $B_2$ can be given by
        \begin{equation*}
        H^\ast(B_2)=\Z[x_1,x_2]/ \langle {x_1}^{n+1},\,x_2\prod_{j=1}^{m}(a_{j}x_1+x_2) \rangle,
        \end{equation*}
        and from~\eqref{cohomology ring}, the cohomology ring of $M$ can be given by
        \begin{equation*}
        H^{\ast}(M)
        =\Z[y_1,y_2]/\langle y_1\prod_{i=1}^{n}(y_1+d_i\,y_2),\,y_2\prod_{j=1}^{m}(c_{j}\,y_1+y_2)\rangle.
        \end{equation*}
        For simplicity, let $\mathcal{I} \subset \Z[x_1,x_2]$ be the ideal generated by the homogeneous polynomials ${x_1}^{n+1}$ and $x_2\prod_{j=1}^{m}(a_{j}x_1+x_2)$ and let $\mathcal{J} \subset \Z[y_1,y_2]$ be also the ideal generated by the homogeneous polynomials $y_1\prod_{i=1}^{n}(y_1+d_i\,y_2)$ and $y_2\prod_{j=1}^{m}(c_{j}\,y_1+y_2)$. Then we have
        $H^\ast(B_2)=\Z[x_1,x_2]/\mathcal{I}$ and $H^\ast(M)=\Z[y_1,y_2]/\mathcal{J}.$

        From the hypothesis, there is a ring isomorphism $\phi: H^\ast(B_2) \to H^\ast(M)$ which preserves the grading. Then the map $\phi$ lifts to a grading preserving isomorphism
        $\bar{\phi}: \Z[x_1,x_2] \to \Z[y_1,y_2]$
        with $\bar{\phi}(\mathcal{I})=\mathcal{J}$. Note that if we let $\bar{\phi}(x_i)=g_{i1}y_1+g_{i2}y_2$, $i=1,2$, then the determinant of $G=\left(
               \begin{array}{cc}
                 g_{11} & g_{12} \\
                 g_{21} & g_{22} \\
               \end{array}
             \right)$ is $\pm 1$.

        We prove the proposition by showing that either $c_{1}=\cdots=c_{m}=0$ or $d_1=\cdots=d_n=0$. Then, by Proposition~\ref{prop:condition to be GBM}, $M$ is equivalent to a generalized Bott manifolds. We consider three cases (1) $n < m$, (2) $n=m$, and (3) $1 < m < n$ separately.

        \textbf{CASE 1: $n < m$}

        Since $\bar{\phi}(\mathcal{I})=\mathcal{J}$ and $m>n$, we have
        \begin{equation}\label{subcase 1-1 added}
            \bar{\phi}(x_1^{n+1})=\alpha y_1\prod_{i=1}^{n}(y_1+d_i\,y_2),
        \end{equation}
        where $\alpha$ is an integer. Then
        the set of prime divisors of $x_1^{n+1}$ is mapped by $\bar{\phi}$ to the set of prime divisors of $\alpha y_1\prod_{i=1}^{n}(y_1+d_i\,y_2)$. Since $x_1$ is the only prime divisor of $x_1^{n+1}$, we must have $\alpha \neq 0$ and $d_i=0$ for all $i$, which prove the proposition in this case.

        \textbf{CASE 2: $n = m$}

        Since $\bar{\phi}(\mathcal{I})=\mathcal{J}$ and $n=m$, we have
        \begin{equation}\label{identity when n=m added added}
            \bar{\phi}(x_1^{n+1})=\alpha y_1\prod_{i=1}^{n}(y_1+d_i\,y_2)+ \alpha^\prime y_2\prod_{j=1}^{n}(c_{j}\,y_1+y_2),
        \end{equation}
        where $\alpha$ and $\alpha^\prime$ are integers.

        \textbf{(i)} If $\alpha$ is zero, then from the similar argument to the case 1, we have $\alpha^\prime \neq 0$ and $c_j=0$ for all $j$, which prove the proposition.

        \textbf{(ii)} If $\alpha^\prime$ is zero, then the similar argument shows that $\alpha \neq 0$ and $d_i=0$ for all $i$, which prove the proposition.

        \textbf{(iii)} Now assume that neither $\alpha$ nor $\alpha^\prime$ is zero. Plugging $\bar{\phi}(x_1)=g_{11}y_1+g_{12}y_2$ into~\eqref{identity when n=m added added}, we have
        \begin{equation}\label{identity when n=m added}
            (g_{11}y_1+g_{12}y_2)^{n+1}=\alpha y_1\prod_{i=1}^{n}(y_1+d_i\,y_2)+ \alpha^\prime y_2\prod_{j=1}^{n}(c_{j}\,y_1+y_2).
        \end{equation} Then
        we can see that $\alpha=g_{11}^{n+1}$ and $\alpha^\prime=g_{12}^{n+1}$ by comparing the coefficients of $y_1^{n+1}$ and $y_2^{n+1}$ on both sides of~\eqref{identity when n=m added}. Hence we have
        \begin{equation}\label{identity when n=m}
            (g_{11}y_1+g_{12}y_2)^{n+1}=g_{11}^{n+1}y_1\prod_{i=1}^{n}(y_1+d_i\,y_2)+g_{12}^{n+1}y_2\prod_{j=1}^{n}(c_{j}\,y_1+y_2)
        \end{equation}
        as polynomials. Plug $y_1=y_2=1$ and $y_1=1,y_2=-1$ into~\eqref{identity when n=m} to get the following system of equations
        \begin{equation}\label{1st system of eqns when n=m}
        \begin{split}
            (g_{11}+g_{12})^{n+1}&=g_{11}^{n+1}\prod_{i=1}^{n}(1+d_i)+g_{12}^{n+1}\prod_{j=1}^{n}(c_{j}+1)\\
            (g_{11}-g_{12})^{n+1}&=g_{11}^{n+1}\prod_{i=1}^{n}(1-d_i)-g_{12}^{n+1}\prod_{j=1}^{n}(c_{j}-1)
        \end{split}
        \end{equation}
        Note that $1-d_ic_{j}=\pm1$ for all $1\leq i,j \leq n$ from the non-singularity condition \eqref{non-singularity}.
        If we show that $d_ic_{j}=0$ for all $1 \leq i,j \leq n$, then we are done. Indeed, if $c_{j_0} \neq 0$ for some $1 \leq j_0 \leq n$, then $d_ic_{j_0}=0$ for all $1 \leq i \leq n$ implies that $d_i=0$ for all $1 \leq i \leq n$. Otherwise $c_{j}=0$ for all $1 \leq j \leq n$, which proves the proposition.

        \textbf{We now show that $d_ic_{j}=0$ for all $1 \leq i,j \leq n$.} Suppose not, i.e., $d_{i_0}c_{j_0} \neq 0$ for some $1 \leq i_0, j_0 \leq n$. Then from the non-singularity condition we have $d_{i_0}c_{j_0}=2$. For simplicity we may assume that $d_1c_{1}=2$,
        \underline{$d_1=1$ and $c_{1}=2$} or \underline{$d_1=2$ and $c_{1}=1$} up to equivalence. But these two cases are symmetric because $n=m$. Thus it is enough to consider the case $d_1=1$ and $c_{1}=2$.

        Since $1-d_ic_{j}=\pm 1$ for all $1 \leq i,j \leq n$, we have that $d_i=1$ or $0$ for $i=2,\ldots,n$ and $c_{j}=2$ or $0$ for $j=2,\ldots,n$. Plug these into~\eqref{1st system of eqns when n=m} to get
        \begin{align}
            (g_{11}+g_{12})^{n+1}&=2^{r}g_{11}^{n+1}+3^{s}g_{12}^{n+1} \label{1 of 2nd system when n=m} \mbox{ and}\\
            (g_{11}-g_{12})^{n+1}&=-g_{12}^{n+1}(-1)^{n+1-s} \label{2 of 2nd system when n=m}
        \end{align}
        for some $1 \leq r,\,s \leq n$. From~\eqref{2 of 2nd system when n=m}, we have $g_{11}=0$ or $g_{11}=2g_{12}$. If $g_{11}=0$, then~\eqref{1 of 2nd system when n=m} implies $g_{12}^{n+1}=3^{s}g_{12}^{n+1}$. Therefore $g_{12}=0$, which contradicts to $\det(G)\neq 0$.
        Otherwise, i.e. $g_{11}=2g_{12}$, then by plugging $g_{11}=2g_{12}$ into~\eqref{1 of 2nd system when n=m} we have $$3^{n+1}g_{12}^{n+1}=2^{r+n+1}g_{12}^{n+1}+3^sg_{12}^{n+1}.$$ Therefore, $g_{12}$ is zero and so is $g_{11}$, which also contradicts to $\det(G) \neq 0$.
        This is because we assumed that $d_{i_0}c_{j_0} =2$ for some $1 \leq i_0,j_0 \leq n$. This shows that $d_{i}c_{j}=0$ for all $1 \leq i,j \leq n$.

        \textbf{CASE 3: $1 < m < n$}

        Since $n>m$, we have
        \begin{equation}\label{1st equation when m<n added added}
        \bar{\phi}(x_2\prod_{j=1}^m(a_jx_1+x_2))=\alpha y_2\prod_{j=1}^m(c_{j}\,y_1+y_2)
        \end{equation}
        for some nonzero integer $\alpha$. Plugging $\bar{\phi}(x_i)=g_{i1}y_1+g_{i2}y_2$ into~\eqref{1st equation when m<n added added}, we have
        \begin{equation}\label{1st equation when m<n added}
            (g_{21}y_1+g_{22}y_2)\prod_{j=1}^m((a_{j}g_{11}+g_{21})y_1+(a_{j}g_{12}+g_{22})y_2)
            =\alpha y_2\prod_{j=1}^m(c_{j}\,y_1+y_2).
        \end{equation}
        Comparing the coefficients of $y_2^{n+1}$ on both sides of~\eqref{1st equation when m<n added}, we can see that $\alpha=g_{22}\prod_{j=1}^m(a_{j}g_{12}+g_{22})$ and we have
        \begin{equation}\label{1st equation when m<n}
            \begin{split}
            (g_{21}y_1+g_{22}y_2)\prod_{j=1}^m((a_{j}g_{11}+g_{21})y_1+(a_{j}g_{12}+g_{22})y_2)&\\
            \qquad=g_{22}\prod_{j=1}^m(a_{j}g_{12}+g_{22})y_2\prod_{j=1}^m(c_{j}\,y_1+y_2).&\\
            \end{split}
        \end{equation}
        By comparing the coefficients of $y_1^{m+1}$ on both sides of \eqref{1st equation when m<n}, we have $g_{21}\prod_{j=1}^m(a_{j}g_{11}+g_{21}) = 0$. If $g_{21}=0$, then $\det(G)=g_{11}g_{22}=\pm1$, and hence $g_{11}=\pm1$. If $a_{j}g_{11}+g_{21}=0$ for some $1 \leq j \leq m$, then $\det(G)=g_{11}g_{22}-g_{12}g_{21}=g_{11}(g_{22}+a_{j}g_{12})=\pm1$. Hence $g_{11}=\pm1$, too.

        As in case 2, it is enough to show that $d_ic_{j}=0$ for all $1 \leq i \leq n$ and $1 \leq j \leq m$.
        Suppose otherwise, i.e. $d_{i_0}c_{j_0}=2$ as before.

        \textbf{(i)} Suppose $c_{j_0}=2$. Then $d_{i_0}=1$, and $c_{j}=0$ or $2$ for all $1 \leq j \leq m$ and $d_i=0$ or $1$ for all $1 \leq i \leq n$. Let $s$ be the number of $c_{j}$'s equal to $2$.

        \textbf{(i-1)} First consider the case $0<s<m$. In this case we may assume $c_{1}=2$ and $c_{m}=0$ for simplicity.
        Since $\bar{\phi}(x_1^{n+1})\in\mathcal{J}$, we have
        \begin{equation}\label{system when n>m added added}
        \bar{\phi}(x_1^{n+1})=\alpha y_1\prod_{i=1}^n(y_1+d_iy_2)+f(y_1,y_2)y_2\prod_{j=1}^m(c_{j}y_1+y_2),
        \end{equation}
        where $\alpha$ is an integer and $f(y_1,y_2)$ is a homogeneous polynomial of degree $n-m$. Plugging $\bar{\phi}(x_1)=g_{11}y_1+g_{12}y_2$ into~\eqref{system when n>m added added}, we have
        \begin{equation}\label{system when n>m added}
            \begin{split}
            &(g_{11}y_1+g_{12}y_2)^{n+1}\\
            &\qquad=\alpha y_1\prod_{i=1}^n(y_1+d_iy_2)+f(y_1,y_2)y_2\prod_{j=1}^m(c_{j}y_1+y_2).\\
            \end{split}
        \end{equation}
        If $\alpha=0$, then $g_{11}=0$, so $c_j=0$ for all $j=1,\ldots,m$. This is a contradiction to the assumption $c_1=1$. Hence $\alpha \neq 0$.
        Comparing the coefficients of $y_1^{n+1}$ on both sides of~\eqref{system when n>m added}, we can see that $\alpha=g_{11}^{n+1}$ and we have
        \begin{equation}\label{system when n>m}
            \begin{split}
            &(g_{11}y_1+g_{12}y_2)^{n+1}\\
            &\qquad=g_{11}^{n+1}y_1\prod_{i=1}^n(y_1+d_iy_2)+f(y_1,y_2)y_2\prod_{j=1}^m(c_{j}y_1+y_2),\\
            \end{split}
        \end{equation}
        as polynomials in $y_1$ and $y_2$. Since $c_{m}=0$, comparing the coefficients of $y_1^{n}y_2$ on both sides of~\eqref{system when n>m}, we get the equation
        \begin{equation}\label{eqn in case 2}
            (n+1)g_{11}^ng_{12}=g_{11}^{n+1}(d_1+\cdots+d_n).
        \end{equation}
        Since $g_{11}=\pm 1$ and $d_i=0$ or $1$ with $d_1+\cdots+d_n \leq n$, the last equation gives a contradiction. So $s<m$ cannot happen.

        \textbf{(i-2)} Now suppose $s=m$, i.e., $c_{1}=\cdots=c_{m}=2$. In this case there is a ring isomorphism $\psi$ from the cohomology ring $$H^{\ast}(M)=\Z[y_1,y_2]/\langle y_1^{n+1-r}(y_1+y_2)^{r},y_2(2y_1+y_2)^m \rangle$$ to the ring $\Z[Y_1,Y_2]/\langle Y_1^{n+1-r}(Y_1+Y_2)^{r},Y_2^{m}(2Y_1+Y_2)\rangle$ given by $\psi(y_1)=-Y_1$, $\psi(y_2)=2Y_1+Y_2$.
        In other words, if $s=m$, then $H^{\ast}(M)$ is isomorphic to a ring
        $$\Z[y_1,y_2]/\langle y_1 \prod_{i=1}^n(y_1+d_iy_2), y_2\prod_{j=1}^m(c_jy_1+y_2)\rangle$$
        with $c_{1}=2$, $c_{2}=\cdots=c_{m}=0$, i.e., $s=1$ case. But by the previous argument this induces a contradiction.

        \textbf{(ii)} Suppose $c_{j_0}=1$. Then $d_{i_0}=2$. As before let $r$ be the number of $c_{j}$'s equal to 1.

        \textbf{(ii-1)} First consider the case when $0 < r <m$. In this case we may assume that $c_{1}=1$ and $c_{m}=0$. By the same argument as above, \eqref{system when n>m} and~\eqref{eqn in case 2} also hold.
         Since $g_{11}=\pm 1$, we have
        $
            (n+1)g_{12}=g_{11}(d_1+\cdots + d_n)
            =2g_{11}s,
        $
        where $s$ is the number of $d_i$'s equal to $2$, and $0 < s \leq n$. This equality holds if and only if $g_{11}=g_{12}$, $s=\frac{n+1}{2}$, and $n$ is odd. By plugging $y_1=1$ and $y_2=-1$ into~\eqref{system when n>m}, we have $0=g_{11}^{n+1}\prod_{i=1}^n(1-d_i)$ which is a contradiction. This shows that $0 < r <m$ is impossible.

        \textbf{(ii-2)} Now suppose $r=m$, i.e., $c_{1}=\cdots=c_{m}=1$. Then by the ring isomorphism given by $\psi(y_1)=-Y_1$ and $\psi(y_2)=Y_1+Y_2$, $H^{\ast}(M)$ is isomorphic to the ring
        $$\Z[Y_1,Y_2]/\langle Y_1^{n+1-s}(Y_1+2Y_2)^{s},Y_2^m(Y_1+Y_2)\rangle,$$
        which is the case when $r=1$. By the previous argument, this case also induces a contradiction.

        We thus have proved that $d_ic_{j}=0$ for all $1 \leq i \leq n$, $1 \leq j \leq m$, which proves the proposition.

    \end{proof}

\section{Quasitoric manifolds over $\Delta^n \times \Delta^1$}\label{sec:classification of quasitoric manifolds when m=1}

    In this section, we restrict our attention to the case where the orbit space is $\Delta^n \times \Delta^1$.

    \begin{example}\cite{DJ}\label{exam:hirz}
        Projective toric manifolds over $\Delta^1 \times \Delta^1$ are \emph{Hirzebruch surfaces} $\Sigma_a = P(\underline{\C}\oplus\gamma^{\otimes a})$ for $a \in \Z$, where $\gamma$ is the tautological line bundle over $\CP^1$. By \cite{Hirzebruch-1951}, $\Sigma_a$ is diffeomorphic to $\Sigma_b$ if and only if $a$ is congruent to $b$ modulo $2$. Hence a projective toric manifold over $\Delta^1 \times \Delta^1$ is diffeomorphic to $\CP^1 \times \CP^1$ or $\CP^2\# \overline{\CP^2}$. On the other hand, $\CP^2 \# \CP^2$ is the unique quasitoric manifold over $\Delta^1 \times \Delta^1$ which is not a projective toric manifold. Hence there are only three
        topological
        types of quasitoric manifolds over $\Delta^1 \times \Delta^1$: $\CP^1 \times \CP^1$, $\CP^2 \# \overline{\CP^2}$, and $\CP^2 \# \CP^2$.
    \end{example}

    Let $M$ be a quasitoric manifold over $\Delta^n \times \Delta^1$. As in Section~\ref{sec:quasitoric manifolds of $b_2=2$}, we order the facets of $\Delta^n \times \Delta^1$ as follows:
        \begin{equation}\label{ordering of facets}
            F_1 \times \Delta^1, \ldots, F_n \times \Delta^1, \Delta^n \times G_1, F_{n+1} \times \Delta^1, \Delta^n \times G_2,
        \end{equation}
    where $F_i$'s are facets of $\Delta^n$ and $G_i$ are facets of $\Delta^1$. Up to equivalence of quasitoric manifolds we may assume that the characteristic function $\lambda$ on the ordered facets gives the following $(n+1) \times (n+3)$ matrix
    \begin{equation}\label{characteristic matrix for m=1}
        \Lambda=\left(\begin{array}{cccccc}
        1&\cdots&0&0&-1&-b_1\\
        \vdots&\ddots&\vdots&\vdots&\vdots&\vdots\\
        0&\cdots&1&0&-1&-b_n\\
        0&\cdots&0&1&-a&-1\\
        \end{array}\right),
    \end{equation}
    namely, $\lambda(F_i \times \Delta^1)
    =\mathbf{e}_i$ for $0 \leq i \leq n$, $\lambda(\Delta^n \times G_1)
    =\mathbf{e}_{n+1}$, $\lambda(F_{n+1} \times \Delta^1)=(-1,\ldots,-1,-a)^T$, and $\lambda(\Delta^n \times G_2)=(-b_1,\ldots,-b_n,-1)^T$.
    We denote such $M$ by $M_{a,\mathbf{b}}$ for $\mathbf{b}=(b_1,\ldots,b_n) \in \Z^n$. Moreover, by the non-singularity condition \eqref{non-singularity}, we have $ab_i=0$ or $2$ for $i=1, \ldots, n$.

    We first consider the case $ab_i=0$ for all $i=1, \ldots, n$. Then $a=0$ or $(b_1,\ldots,b_n)$ is a zero vector. Then $M_{a,\bb}$ is equivalent to a generalized Bott manifold by Proposition~\ref{prop:condition to be GBM}. More precisely, $M_{a,\mathbf{0}} = P(\underline{\C} \oplus \gamma^{\otimes a}) \to \CP^n$, and $M_{0,\bb} = P(\underline{\C}\oplus(\bigoplus_{j=1}^n \gamma^{\otimes b_j})) \to \CP^1$. In this case, $M_{a,\bb}$ is a projective toric manifold. Here, we classify all projective toric manifolds over $\Delta^n \times \Delta^1$ smoothly.

    \begin{proposition}\label{classification of GB with m=1}
        Let $n$ be a positive integer greater than $1$.
        \begin{enumerate}
            \item Let $M_{a, \mathbf{0}}$ denote the two-stage generalized Bott manifold
                $$
                M_{a, \mathbf{0}} = B_2 \stackrel{\pi_2}\longrightarrow B_1 \stackrel{\pi_1}\longrightarrow
                B_0=\{\text{a point}\},
                $$
                where $B_1=\CP^n$, $B_2=P(\underline{\C}\oplus \gamma^{\otimes a})$, and $\gamma$ is the tautological line bundle over $\CP^n$. Then $M_{a, \mathbf{0}}$ is diffeomorphic to $M_{a', \mathbf{0}}$ if and only if $|a|=|a^{\prime}|$.
            \item Let $M_{0,\mathbf{b}}$ denote the two-stage generalized Bott manifold
                $$
                M_{0,\mathbf{b}} = B_2 \stackrel{\pi_2}\longrightarrow B_1 \stackrel{\pi_1}\longrightarrow
                B_0=\{\text{a point}\},
                $$
                where $B_1=\CP^1$, $B_2=P(\underline{\C}\oplus (\bigoplus_{i=1}^n\gamma^{\otimes b_i}))$ for $\mathbf{b}=(b_1,\ldots,b_n)\in\Z^n$, and $\gamma$ is the tautological line bundle over $\CP^1$.
                Then $M_{0,\mathbf{b}}$ is diffeomorphic to $M_{0,\mathbf{b}^{\prime}}$ if and only if there is $\epsilon= \pm1$ such that $\epsilon\sum_{i=1}^n b_i \equiv \sum_{i=1}^n b_i^{\prime} ~(\mathrm{mod}~n+1)$.
        \end{enumerate}
    \end{proposition}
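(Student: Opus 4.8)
The plan is to invoke Theorem~\ref{thm:classification of CMS} in each of the two cases, being careful that the $\CP$-factor playing the role of the base $B_1$ (indexed by ``$n$'' in that theorem) and the one playing the role of the fiber (indexed by ``$m$'') are interchanged between parts (1) and (2). For part (1), the base is $B_1 = \CP^n$, so $H^\ast(B_1) = \Z[x]/\langle x^{n+1}\rangle$, and $B_2 = P(\gamma^{u_0}\oplus\gamma^{u_1})$ with $u_0 = 0$ and $u_1 = ax$ (and $u_1' = a'x$ for $M_{a',\mathbf{0}}$). By Theorem~\ref{thm:classification of CMS}, $M_{a,\mathbf{0}}$ is diffeomorphic to $M_{a',\mathbf{0}}$ if and only if there are $\epsilon = \pm1$ and $w = cx \in H^2(\CP^n)$ with $(1+\epsilon w)\bigl(1+\epsilon(a'x+w)\bigr) = 1+ax$ in $\Z[x]/\langle x^{n+1}\rangle$. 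Expanding the left-hand side gives $1 + \epsilon(2c+a')x + c(a'+c)x^2$; since $n>1$ we have $x^2 \neq 0$, so $c(a'+c) = 0$. The two possibilities $c = 0$ and $c = -a'$ each yield $a = \pm a'$, while conversely $|a| = |a'|$ solves the equation with $c = 0$ and the appropriate sign $\epsilon$, proving (1).

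For part (2), the base is $B_1 = \CP^1$, so $H^\ast(B_1) = \Z[x]/\langle x^2\rangle$ and the fiber has complex dimension $n$; thus $B_2 = P(\bigoplus_{i=0}^n \gamma^{u_i})$ with $u_0 = 0$ and $u_i = b_i x$ for $1 \le i \le n$ (and likewise $u_i' = b_i' x$). By Theorem~\ref{thm:classification of CMS}, $M_{0,\mathbf{b}}$ is diffeomorphic to $M_{0,\mathbf{b}'}$ if and only if there are $\epsilon = \pm1$ and $w = cx$ with $\prod_{i=0}^n\bigl(1+\epsilon(b_i'+c)x\bigr) = \prod_{i=0}^n(1+b_i x)$ in $\Z[x]/\langle x^2\rangle$. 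Since $x^2 = 0$ these products collapse to $1 + \epsilon\bigl(\sum_{i=1}^n b_i' + (n+1)c\bigr)x$ and $1 + \bigl(\sum_{i=1}^n b_i\bigr)x$, so the condition becomes the solvability in $c \in \Z$ of $\epsilon\bigl(\sum_{i=1}^n b_i' + (n+1)c\bigr) = \sum_{i=1}^n b_i$; this holds for some $c$ exactly when $\epsilon\sum_{i=1}^n b_i \equiv \sum_{i=1}^n b_i' \pmod{n+1}$ after multiplying through by $\epsilon$, which is the asserted condition.

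There is no serious obstacle here; the only points needing care are matching the indices of Theorem~\ref{thm:classification of CMS} to base and fiber correctly in each part, and using $n>1$ in part (1) to ensure $x^2 \neq 0$ in $H^\ast(\CP^n)$, which is exactly what forces $|a| = |a'|$. For $n = 1$ this step fails, consistent with the genuinely different behaviour over $\Delta^1\times\Delta^1$ recorded in Example~\ref{exam:hirz}.
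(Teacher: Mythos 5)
Your proposal is correct and follows essentially the same route as the paper: both parts are direct applications of Theorem~\ref{thm:classification of CMS}, reducing to a short computation in $\Z[x]/\langle x^{n+1}\rangle$ (where $x^2\neq 0$ forces $|a|=|a'|$) and in $\Z[x]/\langle x^2\rangle$ (where the products collapse to the stated congruence mod $n+1$), respectively. The care you take in matching base and fiber to the indices of that theorem is exactly what the paper's proof does implicitly.
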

    \begin{proof}
    (1)
        Note that $H^\ast(M_{a, \mathbf{0}})=\Z[x_1,x_2]/\langle x_1^{n+1}, x_2(ax_1+x_2) \rangle$, and
        $\pi_2^\ast(H^\ast(B_1)) \cong \Z[x_1]/x_1^{n+1} \subset
        H^\ast(M_{a,\mathbf{0}})$.
        By Theorem~\ref{thm:classification of CMS}, $M_{a, \mathbf{0}}$ and $M_{a', \mathbf{0}}$ are diffeomorphic if and only if there exist $\epsilon = \pm 1$ and $w \in \Z$ such that
        $$
        (1 + \epsilon wx_1) (1+ \epsilon (a + w)x_1) = (1 + a'x_1)
        $$ in $\Z[x_1]/x_1^{n+1}$. Hence, we have $\epsilon(a + 2w) = a'$ and $w(a+w) x_1^2 = 0$. Since $n>1$, $x_1^2 \neq 0$ in $\Z[x_1]/x_1^{n+1}$. Therefore $w(a+w)=0$, hence $w$ is either $0$ or $-a$. In any case, we obtain $a' = \pm a$.

    (2)
        Note that $H^\ast(M_{0,\mathbf{b}}) = \Z[x_1,x_2]/\langle x_1\prod_{i=1}^n(x_1+b_i x_2), x_2^2 \rangle$ and $\pi_2^\ast(H^\ast(B_1)) \cong \Z[x_2]/x_2^2 \subset
        H^\ast(M_{0,\mathbf{b}})$.
        By Theorem~\ref{thm:classification of CMS}, $M_{0,\mathbf{b}}$ and $M_{0,\mathbf{b'}}$ are diffeomorphic if and only if there exist $\epsilon = \pm 1$ and $w \in \Z$ such that
        $$
        \prod_{i=0}^n (1+ \epsilon(b_i + w)x_2) = \prod_{i=0}^n (1+ b_i'x_2)
        $$  in $\Z[x_2]/x_2^2$, where $b_0 = b'_0 = 0$. Since $x_2^2 =0$ we only have the condition $\epsilon \sum_{i=1}^n b_i + (n+1)w = \sum_{i=1}^n b'_i$.
    \end{proof}

    Now, we consider the case $ab_i=2$ for some $i$. In this case, $M_{a,\mathbf{b}}$ cannot be equivalent to a generalized Bott manifold. However, as we will see later, they can be homeomorphic to generalized Bott manifolds. Note that, by Remark~\ref{rmk:sign}, we may assume that $a$ and the nonzero $b_i$'s have the positive sign. If $ab_i=2$ for some $i=1,\ldots,n$, then $a$ must be either $1$ or $2$.

    Let $s$ be the number of the nonzero $b_i$'s. Then, by \eqref{cohomology ring}, we have
    $$
    H^\ast(M_{a,\mathbf{b}})
    = \Z[x_1, x_2]/ \langle x_1^{n+1-s} (x_1 + bx_2)^s, x_2(ax_1 + x_2)\rangle,
    $$ where $ab=2$.

    Here, we classify all quasitoric manifold which are not equivalent to projective toric manifolds over $\Delta^n \times \Delta^1$ topologically. To do this, we prepare two lemmas.

    \begin{lemma}\label{connected sum}
        For any $\bb \in \Z^n$, $M_{1,\bb}$ is homeomorphic to either $\CP^{n+1} \# \overline{\CP^{n+1}}$ or $\CP^{n+1} \# \CP^{n+1}$.
    \end{lemma}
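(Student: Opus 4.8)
The plan is to use the explicit free action of $K_{1,\bb}$ on $\mathcal{Z}_{\Delta^n\times\Delta^1}=S^{2n+1}\times S^3$ described in Remark~\ref{free action}, and to exhibit $M_{1,\bb}$ directly as a connected sum of two copies of $\CP^{n+1}$ (with one of the two orientations on the second summand). First I would note that since $a=1$, each nonzero $b_i$ equals $1$ and, after permuting coordinates, $\bb=(1,\ldots,1,0,\ldots,0)$ with $s$ ones. The two vectors spanning the sublattice defining $K_{1,\bb}$ are then $(1,\ldots,1,1,0)$ and $(1,\ldots,1,0,\ldots,0,0,1)$; because $a=1$, one of these generators has a coordinate equal to $1$ in a place where the other has $0$, so after a change of basis of $\Z^2$ we may take the generators to be $(0,\ldots,0,\underbrace{1}_{n+1},\,{*})$-type vectors, i.e. one of the $T^2$-factors acts by scalar multiplication on all of one sphere factor and (possibly) on part of the other. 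Concretely, quotienting $S^{2n+1}\times S^3$ first by the circle $\{(t_1,1)\}$ turns the $S^{2n+1}$-factor into $\CP^n$ while leaving an $S^3$ with a residual circle action, and the structure that emerges is that of a fibre bundle over $\CP^1$ whose fibre is obtained from $S^{2n+1}$ by a weighted circle quotient.

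The cleaner route is cohomological combined with a known classification of the relevant bundles. From the displayed cohomology ring,
\[
H^\ast(M_{1,\bb})=\Z[x_1,x_2]/\langle x_1^{n+1-s}(x_1+x_2)^s,\ x_2(x_1+x_2)\rangle .
\]
Set $y=x_1+x_2$; then the second relation is $x_2y=0$ with $x_2=y-x_1$, giving $y^2=x_1y$, and the first relation becomes $x_1^{n+1-s}y^s=0$. One computes directly that $H^\ast(M_{1,\bb})$ is generated by a single degree-$2$ class up to the relation of a connected sum of two projective spaces: indeed the ring is isomorphic to $\Z[x_1,y]/\langle x_1 y,\ x_1^{n+1}+(-1)^{\text{sign}}y^{n+1}\rangle$ (using $x_1^{n+1-s}y^s = x_1^{n+1}$ after repeatedly substituting $y^2=x_1y$, hence $y^{s}x_1^{n+1-s}=x_1^{n+1}$, and similarly $x_1^{n+1-s}y^s = y^{n+1}$ up to sign), which is exactly the cohomology ring of $\CP^{n+1}\#\pm\CP^{n+1}$. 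So the homeomorphism type is forced once we know $M_{1,\bb}$ is a connected sum of two simply connected manifolds — but that is not automatic, so I cannot rely on cohomology alone.

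Therefore the main step, and the main obstacle, is the genuinely topological one: producing an actual connected-sum decomposition. I would do this by the moment-angle picture. Write $S^{2n+1}\subset\C^{n+1}$ with the first $s$ coordinates $(w_1,\dots,w_s)$ and the remaining $(w_{s+1},\dots,w_{n+1})$; the two circles act as $(t_1 t_2^{b_i})$ on $w_i$ and $t_1$ on $w_{n+1}$ for the first sphere, and $t_1 t_2$-type weights on the $S^3=\{(z_1,z_2)\}$. Cutting $S^3$ along the torus $|z_1|=|z_2|=1/\sqrt2$ into two solid tori $|z_1|\le|z_2|$ and $|z_1|\ge|z_2|$, each piece is $K_{1,\bb}$-invariant, and the quotient of $S^{2n+1}\times(\text{solid torus})$ by the free $T^2$-action is, in each case, a disc bundle over a space one checks to be $\CP^{n+1}$ minus an open ball (the relevant quotient of $S^{2n+1}\times S^1$ by a free circle action with these weights, when $a=1$, is standard: it is the lens-space-free case and gives $\CP^{n+1}$ because one weight is $1$). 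Gluing the two pieces along their common boundary $S^{2n+1}\times T^2/K_{1,\bb}\cong S^{2n+1}$ recovers $M_{1,\bb}$ as a union of two copies of $\CP^{n+1}\setminus B^{2n+2}$ along their boundary spheres, i.e.\ a connected sum; the gluing diffeomorphism of $S^{2n+1}$ is either orientation-preserving or reversing depending on the signs, which distinguishes $\CP^{n+1}\#\CP^{n+1}$ from $\CP^{n+1}\#\overline{\CP^{n+1}}$. I would then double-check the orientation bookkeeping against the cohomology ring computed above to confirm both cases actually occur (they do: $s$ versus $n+1-s$, or equivalently the sign of the determinant, selects between the two), completing the proof. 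The delicate point to get right is precisely the identification of each half-quotient with $\CP^{n+1}$ minus a ball — this uses $a=1$ in an essential way, since for $a=2$ the analogous quotient would instead involve a nontrivial $\CP^1$-base and one would not get a connected sum of projective spaces.
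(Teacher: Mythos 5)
Your strategy is essentially the manifold-level incarnation of the paper's argument, which works instead at the level of the polytope: the paper observes that $\Delta^n\times\Delta^1$ is the vertex cut $\vc(\Delta^{n+1})$, that extending a characteristic function over a vertex cut produces a connected sum with $\CP^{n+1}$ or $\overline{\CP^{n+1}}$, and that the determinant condition $\det(\lambda(F_1\times\Delta^1),\dots,\lambda(F_{n+1}\times\Delta^1))=-a=-1$ is exactly what lets $\lambda$ be regarded as extended from a characteristic function on $\Delta^{n+1}$ (whose only quasitoric manifold is $\CP^{n+1}$). Your decomposition of $S^3$ into two $K_{1,\bb}$-invariant solid tori is the same cut, seen upstairs in $\mathcal{Z}_{\Delta^n\times\Delta^1}$: each half lies over $\Delta^n\times[0,\tfrac12]$ or $\Delta^n\times[\tfrac12,1]$, which are the two pieces of the vertex cut. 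What the paper's route buys is that the crucial identification you flag as ``the delicate point'' comes for free from the combinatorics; what your route buys is an explicit equivariant picture, at the cost of having to verify that identification by hand.

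That verification is the one genuine loose end in your write-up. As stated, the identification of each half-quotient is only asserted (``one checks''), and the sketch is dimensionally off: the quotient of $S^{2n+1}\times(\text{solid torus})$ by the free $T^2$ is a $2$-disc bundle over $\CP^n$ (not over ``$\CP^{n+1}$ minus a ball''), namely the disc bundle of a line bundle of degree $\pm 1$ over $\CP^n$, whose total space is $\CP^{n+1}$ minus an open ball and whose boundary is $S^{2n+1}$; the hypothesis $a=\pm1$ is what forces the degree to be $\pm1$ rather than $\pm2$ (for $a=2$ the boundary would be a lens space and no connected-sum decomposition results), so your closing remark is correct but needs this computation to be carried out. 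Separately, your cohomological interlude uses the wrong ring: for $a=1$ the non-singularity condition forces the nonzero $b_i$ to equal $2$, so the first relation is $x_1^{n+1-s}(x_1+2x_2)^s$, not $x_1^{n+1-s}(x_1+x_2)^s$; this does not affect your argument since you rightly discard the purely cohomological route, but it should be fixed if that paragraph is kept.
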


    \begin{proof}
        Let $N$ be a quasitoric manifold over an $(n+1)$-dimensional polytope $P$ with the characteristic function $\lambda$. Let $F_1,\ldots,F_{n+1}$ be the facets of $P$ meeting at a vertex $q$ of $P$. Then from the non-singularity condition \eqref{non-singularity} we have
        $$\det(\lambda(F_1),\ldots,\lambda(F_{n+1}))=\pm 1.$$
        Let $\vc(P)$ be the vertex cut of $P$ about the vertex $q$ of $P$, and let $G$ be the new facet of $\vc(P)$ obtained from the vertex cut. Let $F_1,\ldots,F_{n+1}$ still denote the facets surrounding the facet $G$
        as in
        Figure~\ref{fig:vertex cut of a polytope}. If we extend the characteristic function $\lambda$ to the facets of $\vc(P)$, then the corresponding quasitoric manifold over $\vc(P)$ is a connected sum of $N$ with $\CP^{n+1}$ or $\overline{\CP^{n+1}}$.

        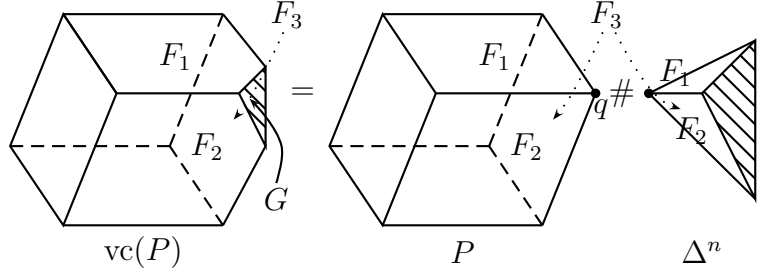
\begin{figure}[h]
        \psset{unit=.7cm}
        \begin{center}
            \begin{pspicture}(-5,-3)(10,3)
                \pspolygon[](-5,-0.5)(-4,-2)(-3,0.5)(-4,2)
                \pspolygon[fillstyle=vlines](-0.2,-0.5)(-0.7,0.5)(-0.2,1)
                \psline[](-3,0.5)(-0.7,0.5)
                \psline[](-0.2,1)(-1,2)
                \psline[](-1,2)(-4,2)
                \psline[](-3,0.5)(-4,2)
                \psline[](-4,-2)(-1,-2)
                \psline[](-1,-2)(-0.2,-0.5)
                \psline[linestyle=dashed](-5,-0.5)(-2,-0.5)
                \psline[linestyle=dashed](-1,2)(-2,-0.5)
                \psline[linestyle=dashed](-2,-0.5)(-1,-2)
                \rput(-1.9,1.2){$F_1$}
                \rput(-1.3,-0.5){$F_2$}
                \rput(0.2,2){$F_3$}
                \pscurve[linestyle=dotted]{->}(0.2,1.8)(-0.5,0.4)(-0.8,0)
                \pscurve[]{->}(0,-1.2)(0.1,-0.5)(-0.5,0.4)
                \rput(0,-1.5){$G$}
                \rput(-2.5,-2.5){$\mathrm{vc}(P)$}
                \rput(0.5,0.5){$=$}
                \pspolygon[](1,-0.5)(2,-2)(3,0.5)(2,2)
                \psline[](3,0.5)(6,0.5)
                \psline[](2,2)(5,2)
                \psline[](5,2)(6,0.5)
                \psline[](2,-2)(5,-2)
                \psline[](5,-2)(6,0.5)
                \psline[linestyle=dashed](1,-0.5)(4,-0.5)
                \psline[linestyle=dashed](4,-0.5)(5,2)
                \psline[linestyle=dashed](4,-0.5)(5,-2)
                \rput(4.1,1.2){$F_1$}
                \rput(4.7,-0.5){$F_2$}
                \rput(6.2,2){$F_3$}
                \psdot(6,0.5)
                \rput(6.1,0.2){$q$}
                \pscurve[linestyle=dotted]{->}(6.2,1.8)(5.5,0.4)(5.2,0)
                \pscurve[linestyle=dotted]{->}(6.3,1.8)(7.2,0.4)(7.6,0.2)
                \rput(3.5,-2.5){$P$}
                \rput(6.5,0.5){$\#$}
                \psline[](7,0.5)(8,0.5)
                \pspolygon[fillstyle=vlines](8,0.5)(9,1.5)(9,-1.5)
                \psline[](7,0.5)(9,1.5)
                \psline[](7,0.5)(9,-1.5)
                \psdot(7,0.5)
                \rput(7.5,0.9){$F_1$}
                \rput(7.8,-0.2){$F_2$}
                \rput(8,-2.5){$\Delta^n$}
            \end{pspicture}
        \end{center}
        \caption{The vertex cut of a polytope $P$}
        \label{fig:vertex cut of a polytope}
    \end{figure}

        Recall the ordering~\eqref{ordering of facets} of the facets of $\Delta^n \times \Delta^1$. Since $\Delta^n \times \Delta^1$ can be viewed as a vertex cut of $\Delta^{n+1}$, the condition
        $$
        \det(\lambda(F_1 \times \Delta^1),\ldots,\lambda(F_n \times \Delta^1),\lambda(F_{n+1} \times \Delta^1))= -a = -1
        $$
        implies that the characteristic function $\lambda$ can be considered as the one extended from a characteristic function on $\Delta^{n+1}$. Therefore $M_{1,\bb}$ is homeomorphic to either $\CP^{n+1}\#\overline{\CP^{n+1}}$ or $\CP^{n+1}\# \CP^{n+1}$.
    \end{proof}

    As we have seen in Remark~\ref{free action}, the moment angle manifold $\mathcal{Z}_{\Delta^n \times \Delta^1}$ is
    $$S^{2n+1}\times S^3=\{(\vw,\vz) \in \C^{n+1} \times \C^2 \colon |\vw|=1,~|\vz|=1\}$$
    and the subtorus $\ker(\overline{\lambda}) \subset T^{n+3}$ is represented by the unimodular subgroup of $\Z^{n+3}$ spanned by $(1,\ldots,1,a,0)$ and $(b_1,\ldots,b_n,0,1,1)$.
    In this section, we denote the subtorus $\ker(\overline{\lambda})$ by $K_{a,\bb}$.

    Assume that we have two quasitoric manifolds $M_{a,\bb}$ and $M_{a^\prime,\bb^\prime}$. If there is a $\theta$-equivariant homeomorphism $\varphi$ from $\mathcal{Z}_{\Delta^n \times \Delta^1}$ with the action of the subgroup $K_{a,\bb} \subset T^{n+3}$ to $\mathcal{Z}_{\Delta^n \times \Delta^1}$ with the action of the subgroup $K_{a^\prime,\bb^\prime} \subset T^{n+3}$, where $\theta$ is an isomorphism from $K_{a,\bb}$ to $K_{a^\prime,\bb^\prime}$, then $\varphi$ induces a homeomorphism $$\overline{\varphi}:M_{a,\bb}=\mathcal{Z}_{\Delta^n \times \Delta^1}/K_{a,\bb} \to M_{a^\prime,\bb^\prime}=\mathcal{Z}_{\Delta^n \times \Delta^1}/K_{a^\prime,\bb^\prime}.$$

    \begin{lemma} \label{lem:class a=pm2}
        Let $n>1$, $\bb=(b,\ldots,b,0,\ldots,0) \in \Z^n$, and $ab=2$. Then we have
        \begin{enumerate}
            \item $M_{a,(b,0,\ldots,0)}$ is homeomorphic to $M_{a,(b,\ldots,b)}$, and
            \item $M_{a,\bb}$ is either homeomorphic to $M_{a,\mathbf{0}}$ if $s$ is even, or $M_{a,(b,0,\ldots,0)}$ if $s$ is odd, where $s$ is the number of $b$'s in $\bb$.
        \end{enumerate}
        In particular, if $n$ is even, then $M_{a,\bb}$ is homeomorphic to $M_{a,\mathbf{0}}$.
    \end{lemma}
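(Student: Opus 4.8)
The plan is to construct explicit $\theta$-equivariant self-homeomorphisms of the moment angle manifold $\mathcal Z_{\Delta^n\times\Delta^1}=S^{2n+1}\times S^3$, and then invoke the induced-homeomorphism principle from Section~\ref{Preliminaries} to conclude that the corresponding quotient quasitoric manifolds are homeomorphic. Throughout, $K_{a,\bb}\subset T^{n+3}$ denotes the subtorus spanned by $(1,\ldots,1,a,0)$ and $(b_1,\ldots,b_n,0,1,1)$, acting freely on $S^{2n+1}\times S^3$, with $M_{a,\bb}=(S^{2n+1}\times S^3)/K_{a,\bb}$.

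\medskip
\noindent\textbf{Step 1: part (1), swapping a single $b$ for all $b$'s.} Here $ab=2$, so $(a,b)\in\{(1,2),(2,1)\}$ (after fixing signs by Remark~\ref{rmk:sign}). I would exhibit a homeomorphism $\varphi\colon S^{2n+1}\times S^3\to S^{2n+1}\times S^3$ which is $\theta$-equivariant for a suitable isomorphism $\theta\colon K_{a,(b,0,\ldots,0)}\to K_{a,(b,\ldots,b)}$. The natural candidate is a ``twist'' in the $S^{2n+1}$-factor: writing a point as $(w_1,\ldots,w_{n+1},z_1,z_2)$, set $\varphi(w,z)=(w_1,\psi_2(w),\ldots,\psi_n(w),w_{n+1},z_1,z_2)$ where each $\psi_k$ multiplies $w_k$ by a power of $z_1/|z_1|$ (or by a continuous phase depending on $z$)—the key point being that $|z_1|$ need not be bounded away from $0$, so one must use a phase that extends continuously over $z_1=0$; the standard device is to use $z_1$ itself together with a radial cutoff, exactly as in the $m=1$ analogue arguments the paper cites from \cite{ch-ma-su08}, \cite{CS}. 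One checks that conjugating the $K_{a,(b,0,\ldots,0)}$-action by $\varphi$ yields precisely the $K_{a,(b,\ldots,b)}$-action, which amounts to a unimodular change of the spanning vectors of the subgroup. By the diagram on page~\pageref{lambda}, $\overline\varphi$ is the desired homeomorphism.

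\medskip
\noindent\textbf{Step 2: part (2), reducing $s$ by $2$ at a time.} I would show $M_{a,(b,b,b_3,\ldots,b_n)}$ is homeomorphic to $M_{a,(0,0,b_3,\ldots,b_n)}$; iterating then gives the claim ($s$ even $\Rightarrow$ all zeros $\Rightarrow M_{a,\mathbf 0}$; $s$ odd $\Rightarrow$ one nonzero entry $\Rightarrow M_{a,(b,0,\ldots,0)}$). The homeomorphism of $S^{2n+1}\times S^3$ here should act in the first two $\C$-coordinates of $S^{2n+1}$ by a map that absorbs a factor of $z_2/|z_2|$ (or rather a continuous phase coming from $z$), exploiting that $ab=2$ means a ``degree-$2$'' twist is available; concretely, a rotation of the $(w_1,w_2)$-plane by an angle determined by $\arg z_2$, suitably cut off near $z_2=0$, converts the two facet vectors $(1,\ldots)$ with $b$-entries $(b,b)$ into ones with $(0,0)$. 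The residual freedom ``sign $\epsilon=\pm1$'' is irrelevant since $M_{a,\bb}\cong M_{-a,-\bb}$. For $n$ even, $s\le n$ can be taken even (if $s$ is odd, apply part (1) to spread the single $b$ to all $n$ slots, then $s=n$ is even), so $M_{a,\bb}$ is homeomorphic to $M_{a,\mathbf 0}$.

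\medskip
\noindent\textbf{Main obstacle.} The delicate point is continuity of the twisting homeomorphism across the loci $z_1=0$ and $z_2=0$ in $S^3$: a naive ``multiply $w_k$ by $(z_i/|z_i|)^{c}$'' is undefined there. The fix is to interpolate — multiply by $z_i^{c}$ near that locus and by the unit phase away from it, patched by a radial partition of unity in $|z_i|$, and to check that the resulting map is a homeomorphism (it is, being fiberwise a homeomorphism over $S^3$ depending continuously on the base) and that conjugation sends the one torus action to the other on the nose. This is routine once set up, and is the same mechanism used in the $\Delta^1\times\Delta^1$ case in \cite{CS}; verifying the bookkeeping on spanning vectors of $K_{a,\bb}$ is the bulk of the work but is elementary linear algebra over $\Z$.
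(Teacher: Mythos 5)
Your high-level strategy is the paper's: build explicit $\theta$-equivariant self-homeomorphisms of $\mathcal{Z}_{\Delta^n\times\Delta^1}=S^{2n+1}\times S^3$ and descend to the quotients, reducing the number of nonzero entries of $\bb$ two at a time. But the actual construction of those homeomorphisms is the entire content of the lemma, and the mechanism you sketch does not work. You propose multiplying $w_k$ by a phase $z_i/|z_i|$ (or $e^{i\arg z_i}$) and repairing the singularity at $z_i=0$ by ``multiplying by $z_i^c$ near that locus'' with a radial cutoff. This fails for three separate reasons: (a) $z\mapsto z/|z|$ has degree $1$ on small circles around the origin, so it admits no continuous $S^1$-valued extension over $z=0$; (b) multiplying $w_k$ by $z_i^c$ kills injectivity on the locus $z_i=0$, so the patched map is not a homeomorphism there; and (c) equivariance is an exact, pointwise identity — conjugating the $K_{a,\bb}$-action by $\varphi$ must yield the $K_{a,\bb'}$-action \emph{on the nose} everywhere — so a partition-of-unity interpolation between two different formulas destroys the very property you need. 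The correct device, used in the paper, is an $\R$-linear isometry of $\C^2$ depending continuously on $(z_1,z_2)\in S^3$, namely
\begin{equation*}
(w_{k-1},w_k)\longmapsto(\overline{z_1}w_{k-1}+z_2\overline{w_k},\,-z_2\overline{w_{k-1}}+\overline{z_1}w_k),
\end{equation*}
i.e.\ quaternion multiplication by $\overline{z_1+z_2\mathbf{j}}$; this is globally defined and norm-preserving because $|z_1|^2+|z_2|^2=1$, needs no cutoff, and is precisely why the $w$-coordinates must be paired and $s$ drops by $2$. No single-coordinate phase twist can achieve this.

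Two further discrepancies with the paper. For part (1) no twist is needed at all: the paper's $\varphi$ is just the transposition $w_1\leftrightarrow w_{n+1}$ together with $z_2\mapsto\overline{z_2}$, with the nontrivial reparametrization $\theta\colon\mu(t_1,t_2)\mapsto\mu'(t_1t_2^b,t_2^{-1})$ absorbing the exponent change (this is where $ab=2$ enters). And for $a=1$ the paper does not use the moment-angle argument at all: it identifies $M_{1,\bb}$ as $\CP^{n+1}\#\CP^{n+1}$ or $\CP^{n+1}\#\overline{\CP^{n+1}}$ via the vertex-cut/connected-sum Lemma~\ref{connected sum} and then decides which one occurs by a cohomology computation (plus the orientation-reversing homeomorphism of $\CP^{n+1}$ for $n$ even); your uniform treatment of $a=1$ and $a=2$ would require a separate equivariant map for the $b=2$ case that you have not exhibited.
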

    \begin{proof}
    (1)
    Let $\bb = (b,0,\ldots, 0)$ and $\bb' = (b, \ldots, b)$.
    Then, by \eqref{eqn:homomorphism}, there are isomorphisms $\mu\colon T^2 \to K_{a,\bb} \subset T^{n+3}$ and $\mu'\colon T^2 \to K_{a,\bb'} \subset T^{n+3}$ defined by
    \begin{equation*}
        \left(\begin{array}{cc}
        1&b\\1&0\\\vdots&\vdots\\1&0\\a&1\\0&1
        \end{array}\right)
        \mbox{ and }
        \left(\begin{array}{cc}
        1&b\\\vdots&\vdots\\1&b\\1&0\\a&1\\0&1
        \end{array}\right),
    \end{equation*}
    respectively.
    We set $(\vw,\vz) = (w_1, \ldots, w_{n+1}, z_1, z_2) \in S^{2n+1} \times S^3 \subset \C^{n+1} \times \C^2.$
    We define an isomorphism $\theta \colon K_{a,\bb} \to K_{a,\bb'} $ by
    $\mu(t_1,t_2) \mapsto \mu'(t_1t_2^b,t_2^{-1})$
    and a map $\varphi\colon S^{2n+1} \times S^3 \to S^{2n+1} \times S^3$ by
    $$
    \varphi(w_1, \ldots, w_{n+1},z_1,z_2) = (w_{n+1}, w_2, \ldots, w_n, w_1, z_1,\overline{z_2}).
    $$

    Let us check that $\varphi$ is $\theta$-equivariant;
    \begin{align*}
        &\varphi(\mu(t_1,t_2)\cdot(\vw,\vz))\\
        &\,\,=\varphi(t_1t_2^bw_1,t_1w_2,\ldots,t_1w_{n+1},t_1^at_2z_1,t_2z_2)\\
        &\,\,=(t_1w_{n+1},t_1w_2,\ldots,t_1w_n,t_1t_2^bw_1,t_1^at_2z_1,t_2^{-1}\overline{z_2})\\
        &\,\,=(t_1t_2^b(t_2^{-1})^bw_{n+1},\ldots,t_1t_2^b(t_2^{-1})^bw_n,t_1t_2^bw_1,(t_1t_2^b)^at_2^{-1}z_1,t_2^{-1}\overline{z_2})\\
        &\,\,=\mu'(t_1t_2^b,t_2^{-1})\cdot\varphi(\vw,\vz)\\
        &\,\,=\theta(\mu(t_1,t_2))\cdot\varphi(\vw,\vz)
    \end{align*}
    because $ab=2$. Hence $\varphi$ is a $\theta$-equivariant homeomorphism which induces a homeomorphism
    $\overline{\varphi}: M_{a,\bb} \to M_{a,\bb'}$.

    (2)
    By Lemma~\ref{connected sum}, $M_{1,\bb}$ is homeomorphic to $\CP^{n+1}\#\overline{\CP^{n+1}}$ or $\CP^{n+1}\#\CP^{n+1}$. Note that $M_{1,\mathbf{0}}=\CP^{n+1}\#\overline{\CP^{n+1}}$. If $n$ is even, $\CP^{n+1}$ has an orientation-reversing self-homeomorphism. Thus  $\CP^{n+1}\#\overline{\CP^{n+1}}$ is homeomorphic to $\CP^{n+1}\#\CP^{n+1}$. So each $M_{1,\bb}$ is homeomorphic to $M_{1,\mathbf{0}}$. If $n$ is odd, then we have
    \begin{equation*}
        H^\ast(M_{1,\bb})=\left\{\begin{array}{l}H^\ast(\CP^{n+1}\#\overline{\CP^{n+1}})\mbox{ if $s$ is even,}\\
        H^\ast(\CP^{n+1}\#\CP^{n+1})\mbox{ if $s$ is odd.}
        \end{array}\right.
    \end{equation*}
    We note that $H^\ast(M_{1,\mathbf{0}})$ and $H^\ast(M_{1,(2,0,\ldots,0)})$ are not isomorphic as graded rings. (We refer the reader to see the proof of Theorem~\ref{thm:classify NGB for n=1} below.)
     Therefore, $M_{1,\bb}$ is either homeomorphic to $M_{1,\mathbf{0}}=\CP^{n+1}\#\overline{\CP^{n+1}}$ if $s$ is even, or $M_{1,(2,0,\ldots,0)}=\CP^{n+1}\#\CP^{n+1}$ if $s$ is odd.

        Now, consider the case $a=2$.
        Let $\bb = (\underbrace{1,\ldots,1}_{s}, 0,\ldots,0)$, $\bb' = \mathbf{0}$, and $\bb''=(1,0,\ldots,0)$.
        Then, by \eqref{eqn:homomorphism}, there are isomorphisms $\mu: T^2 \to K_{2,\bb} \subset T^{n+3}$, $\mu': T^2 \to K_{2,\bb'} \subset T^{n+3}$, and $\mu'': T^2 \to K_{2,\bb''} \subset T^{n+3}$ defined by
        \begin{equation*}
            \left(\begin{array}{cc}1&1\\\vdots&\vdots\\1&1\\1&0\\\vdots&\vdots\\1&0\\2&1\\0&1\end{array}\right),
            \left(\begin{array}{cc}1&0\\\vdots&\vdots\\1&0\\2&1\\0&1\end{array}\right), \mbox{ and }
            \left(\begin{array}{cc}1&1\\1&0\\\vdots&\vdots\\1&0\\2&1\\0&1\end{array}\right),
        \end{equation*}
        respectively.

        If $s$ is even, we define an isomorphism $\theta\colon K_{2,\bb} \to K_{2,\bb'}$ by
        $\mu(t_1,t_2) \mapsto \mu'(t_1^{-1},t_2^{-1})$
        and a map $\varphi\colon S^{2n+1} \times S^3 \to S^{2n+1} \times S^3$ by
        \begin{equation*}
            \begin{split}
            (\vw,\vz)\mapsto &(\overline{z_1}w_1+z_2\overline{w_2},-z_2\overline{w_1}+\overline{z_1}w_2,\\
            &\qquad\ldots, -z_2\overline{w_{s-1}}+\overline{z_1}w_s,\overline{w_{s+1}},\ldots,\overline{w_{n+1}},\overline{z_1},\overline{z_2}).\\
            \end{split}
        \end{equation*}
       This map is well-defined because $(\overline{z_1}w_{k-1}+z_2\overline{w_k},-z_2\overline{w_{k-1}}+\overline{z_1}w_k)$ comes from the multiplication of quaternion numbers $\overline{z_1+z_2\mathbf{j}}$ and $w_{k-1}+w_{k}\mathbf{j}$ for even $k$ with $2\leq k \leq s$. Then this map $\varphi$ is $\theta$-equivariant because
        \begin{align*}
            &\varphi(\mu(t_1,t_2)\cdot(\vw,\vz))\\
            &=\varphi(t_1t_2w_1,\ldots,t_1t_2w_s,t_1w_{s+1},\ldots,t_1w_{n+1},t_1^2t_2z_1,t_2z_2)\\
            &=(t_1^{-1}(\overline{z_1}w_1+z_2\overline{w_2}),t_1^{-1}(-z_2\overline{w_1}+\overline{z_1}w_2),\\
            &\qquad\ldots, t_1^{-1}(-z_2\overline{w_{s-1}}+\overline{z_1}w_s),t_1^{-1}\overline{w_{s+1}},\ldots,t_1^{-1}\overline{w_{n+1}},
            t_1^{-2}t_2^{-1}\overline{z_1},t_2^{-1}\overline{z_2})\\
            &=\mu'(t_1^{-1},t_2^{-1})\cdot\varphi(\vw,\vz)\\
            &=\theta(\mu(t_1,t_2))\cdot\varphi(\vw,\vz).
        \end{align*}
        Hence $\varphi$ induces a homeomorphism $\overline{\varphi}\colon M_{2,\bb} \to M_{2,\bb'}$

        If $s$ is odd, we define an isomorphism $\theta\colon K_{2,\bb} \to K_{2,\bb''}$ by
        $\mu(t_1,t_2) \mapsto \mu''(t_1^{-1},t_2^{-1})$
        and
        a map $\varphi\colon S^{2n+1} \times S^3 \to S^{2n+1} \times S^3$ by
        \begin{equation*}
            \begin{split}
            (\vw,\vz)\mapsto & (\overline{w_1},\overline{z_1}w_2+z_2\overline{w_3},-z_2\overline{w_2}+\overline{z_1}w_3,\\
            &\qquad\ldots ,-z_2\overline{w_{s-1}}+\overline{z_1}w_{s},\overline{w_{s+1}},\ldots,\overline{w_{n+1}},\overline{z_1},\overline{z_2}).\\
            \end{split}
        \end{equation*} Then this map $\varphi$ is also $\theta$-equivariant because
        \begin{align*}
            &\varphi(\mu(t_1,t_2)\cdot(\vw,\vz))\\
            &=\varphi(t_1t_2w_1,\ldots,t_1t_2w_s,t_1w_{s+1},\ldots,t_1w_{n+1},t_1^2t_2z_1,t_2z_2)\\
            &=(t_1^{-1}t_2^{-1}\overline{w_1},
            t_1^{-1}(\overline{z_1}w_2+z_2\overline{w_3}),t_1^{-1}(-z_2\overline{w_2}+\overline{z_1}w_3),\\
            &\qquad\ldots ,t_1^{-1}(-z_2\overline{w_{s-1}}+\overline{z_1}w_{s}),t_1^{-1}\overline{w_{s+1}},\ldots,t_1^{-1}\overline{w_{n+1}},
            t_1^{-2}t_2^{-1}\overline{z_1},t_2^{-1}\overline{z_2})\\
            &=\mu''(t_1^{-1},t_2^{-1})\cdot\varphi(\vw,\vz)\\
            &=\theta(\mu(t_1,t_2))\cdot\varphi(\vw,\vz).
        \end{align*}
       Hence $\varphi$ induces a homeomorphism $\overline{\varphi}\colon M_{2,\bb} \to M_{2,\bb''}$.
    \end{proof}

    Now, we are ready to prove the following topological classification of quasitoric manifolds over $\Delta^n \times \Delta^1$ which are not projective toric manifolds.

    \begin{theorem}\label{thm:classify NGB for n=1}
    Let $n>1$, $\bb=(b,\ldots,b,0,\ldots,0)\in\Z^n$, and $ab=2$. Then the homeomorphism classes of quasitoric manifolds $M_{a,\bb}$ are represented by the following.
    \begin{enumerate}
    \item $M_{1,\mathbf{0}}$ and $M_{2,\mathbf{0}}$, if $n$ is even, or
    \item $M_{1,\mathbf{0}},~M_{2,\mathbf{0}},~M_{1,(2,0,\ldots,0)}$ and $M_{2,(1,0,\ldots,0)}$, if $n$ is odd.
    \end{enumerate}
       Furthermore, the cohomology ring of each class is distinct.
    \end{theorem}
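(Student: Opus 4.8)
The plan is to combine the homeomorphism reductions already obtained in Lemmas~\ref{connected sum} and~\ref{lem:class a=pm2} with the smooth classification of the toric case (Proposition~\ref{classification of GB with m=1}), and then to separate the surviving candidates by computing their cohomology rings explicitly. First I would observe that by Remark~\ref{rmk:sign} and the standing normalization we may take $a\in\{1,2\}$ with $ab=2$, and that Lemma~\ref{lem:class a=pm2} reduces every $M_{a,\bb}$ with $\bb=(b,\dots,b,0,\dots,0)$ to one of the four manifolds $M_{1,\mathbf 0}$, $M_{2,\mathbf 0}$, $M_{1,(2,0,\dots,0)}$, $M_{2,(1,0,\dots,0)}$: when $a=1$ Lemma~\ref{connected sum} identifies $M_{1,\bb}$ with $\CP^{n+1}\#\overline{\CP^{n+1}}$ or $\CP^{n+1}\#\CP^{n+1}$ according to the parity of $s$, and for $n$ even these coincide, while for $a=2$ the explicit $\theta$-equivariant homeomorphisms of Lemma~\ref{lem:class a=pm2}(2) send $M_{2,\bb}$ to $M_{2,\mathbf 0}$ ($s$ even) or $M_{2,(1,0,\dots,0)}$ ($s$ odd). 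This already yields the list in cases (1) and (2); what remains is to check these representatives are pairwise non-homeomorphic, for which it suffices to show their cohomology rings are pairwise non-isomorphic as graded rings.

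So the substantive work is the cohomology computation. Using~\eqref{cohomology ring}, for $n$ odd we have
$$
H^\ast(M_{1,\mathbf 0})=\Z[x_1,x_2]/\langle x_1^{n+1},\,x_2^2\rangle,\qquad
H^\ast(M_{2,\mathbf 0})=\Z[x_1,x_2]/\langle x_1^{n+1},\,x_2(2x_1+x_2)\rangle,
$$
$$
H^\ast(M_{1,(2,0,\dots,0)})=\Z[x_1,x_2]/\langle x_1^{n}(x_1+2x_2),\,x_2^2\rangle,\qquad
H^\ast(M_{2,(1,0,\dots,0)})=\Z[x_1,x_2]/\langle x_1^{n}(x_1+x_2),\,x_2(2x_1+x_2)\rangle,
$$
and for $n$ even only the first two occur. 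The plan is to distinguish these by invariants insensitive to the choice of basis of $H^2$. The two $a=1$ rings have a square-zero element in degree $2$ (namely $x_2$) but the two $a=2$ rings do not: any degree-$2$ class is $px_1+qx_2$, and squaring it and reducing modulo the relations gives $(p^2+2pq\,\text{or similar})x_1^2$, which I would show vanishes only when $p=q=0$ since $x_1^2\neq 0$ (here $n>1$). This separates $\{M_{1,\mathbf 0},M_{1,(2,0,\dots,0)}\}$ from $\{M_{2,\mathbf 0},M_{2,(1,0,\dots,0)}\}$. Within the $a=1$ pair, $M_{1,\mathbf 0}$ has the property that some generator of $H^2$ (namely $x_1$) satisfies $u^{n+1}=0$ but $u^n\neq 0$, whereas for $M_{1,(2,0,\dots,0)}$ I would check directly that no primitive $u\in H^2$ has $u^{n+1}=0$ with $u^n\neq 0$ (the relation forces $u^n$ to be a nonzero multiple of $x_1^n$ only when the $x_2$-coefficient vanishes, and then $u^{n+1}=x_1^{n+1}=-2x_1^nx_2\neq 0$). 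A parallel computation separates $M_{2,\mathbf 0}$ from $M_{2,(1,0,\dots,0)}$: in $M_{2,\mathbf 0}$ the class $x_1$ has $x_1^{n+1}=0$, while in $M_{2,(1,0,\dots,0)}$ one shows no primitive class is nilpotent of order $n+1$.

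The main obstacle I anticipate is the bookkeeping in these last separations: one must argue over \emph{all} primitive degree-$2$ classes $u=px_1+qx_2$ with $\gcd(p,q)=1$, reduce $u^{n+1}$ modulo the ideal (which mixes the two relations once the degree exceeds $n+1$), and verify the claimed vanishing/non-vanishing pattern—this requires a careful analysis of when $x_1^k x_2^{n+1-k}$ survives in the quotient. A clean way to organize this is to pass to the additive basis $\{x_1^i,\,x_1^ix_2:0\le i\le n\}$ of $H^{2i}$ and express multiplication by $x_1$ and by $x_2$ as explicit matrices using the two defining relations; then $u^{n+1}$ is a polynomial identity in $p,q$ whose coefficients one reads off and checks are not simultaneously zero. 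I would also note the cross-case $n$ odd: $M_{1,\mathbf 0}\cong\CP^{n+1}\#\overline{\CP^{n+1}}$ and $M_{1,(2,0,\dots,0)}\cong\CP^{n+1}\#\CP^{n+1}$, so the required non-isomorphism of their cohomology rings is the classical fact that these connected sums are distinguished by the existence of a nonzero square in $H^2$ (since $n+1$ is even), which gives a shortcut for that sub-case and doubles as a consistency check.
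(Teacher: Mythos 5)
Your overall architecture matches the paper's: Lemmas~\ref{connected sum} and~\ref{lem:class a=pm2} reduce everything to the four representatives, and the remaining content is showing that their cohomology rings are pairwise non-isomorphic; your nilpotency argument (for $n$ odd no degree-two class of $M_{1,(2,0,\ldots,0)}$ or $M_{2,(1,0,\ldots,0)}$ has vanishing $(n+1)$-st power, while $x_1$ does in $M_{a,\mathbf{0}}$) is exactly the paper's. However, there is a genuine error in your presentations, and it sinks your main separating invariant. By~\eqref{cohomology ring} with $m=1$ and $a=1$, the quadratic relation in $H^\ast(M_{1,\mathbf{0}})$ and in $H^\ast(M_{1,(2,0,\ldots,0)})$ is $x_2(x_1+x_2)$, not $x_2^2$; the ring $\Z[x_1,x_2]/\langle x_1^{n+1},x_2^2\rangle$ is $H^\ast(M_{0,\mathbf{0}})=H^\ast(\CP^n\times\CP^1)$, which by Proposition~\ref{classification of GB with m=1} is not isomorphic to $H^\ast(M_{1,\mathbf{0}})$ for $n>1$. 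With the correct relation one gets $(px_1+qx_2)^2=p^2x_1^2+(2pq-q^2)x_1x_2$, which vanishes only for $p=q=0$ when $n>1$ (equivalently: neither $\CP^{n+1}\#\overline{\CP^{n+1}}$ nor $\CP^{n+1}\#\CP^{n+1}$ has a square-zero class in $H^2$ once $n\geq 2$). So none of the four rings contains a square-zero degree-two class, and your proposed separation of the $a=1$ rings from the $a=2$ rings collapses.

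The concrete casualty is the pair $M_{1,(2,0,\ldots,0)}$ versus $M_{2,(1,0,\ldots,0)}$ for $n$ odd: both rings have no square-zero class and no degree-two class with vanishing $(n+1)$-st power, so neither of your invariants distinguishes them, and your proposal contains no working argument for this comparison. (The comparisons $M_{1,\mathbf{0}}$ vs.\ $M_{2,\mathbf{0}}$, and $M_{a,\mathbf{0}}$ vs.\ either non-Bott representative, are still covered by Proposition~\ref{classification of GB with m=1} and by your nilpotency computation, respectively.) The paper closes the remaining gap with a different idea: since $n>1$, a lifted isomorphism $\bar\phi$ must carry the quadratic ideal generator $x_2(x_1+x_2)$ to an integer multiple of $y_2(2y_1+y_2)$; but the linear prime factors $x_2$ and $x_1+x_2$ form a unimodular pair and hence generate $\Z[x_1,x_2]$ as a $\Z$-algebra, whereas $y_2$ and $2y_1+y_2$ span an index-two sublattice of the linear part and do not generate $\Z[y_1,y_2]$ --- a contradiction. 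You would need to add this (or an equivalent mod-$2$ argument) to complete the proof.
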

    \begin{proof}
    By Lemma~\ref{lem:class a=pm2}, each quasitoric manifold over $\Delta^n \times \Delta^1$ is homeomorphic to one of them. Hence, it is enough to show the last statement.

    We note that, by Proposition~\ref{classification of GB with m=1}, the cohomology rings of $M_{1,\mathbf{0}}$ and $M_{2,\mathbf{0}}$ are distinct. Thus, it suffices to show that if $n$ is odd and $a'b'=2$, then $H^\ast(M_{a,\mathbf{0}}) \not\cong H^\ast(M_{a',(b',0,\ldots,0)})$ and $H^\ast(M_{1,(2,0,\ldots,0)}) \not\cong H^\ast(M_{2,(1,0,\ldots,0)})$.

    We denote $M=M_{1,(2,0,\ldots,0)}$ and $N=M_{2,(1,0,\ldots,0)}$. Then
    \begin{align*}
        H^\ast(M) & = \Z[x_1,x_2]/ \langle x_1^n (x_1 + 2x_2), x_2(x_1 + x_2)\rangle \\
        H^\ast(N) & = \Z[y_1,y_2]/ \langle y_1^n (y_1 + y_2), y_2(2y_1 + y_2)\rangle.
    \end{align*}

    We first claim that $H^\ast(M_{a,\mathbf{0}})$ is neither isomorphic to $H^\ast(M)$ nor $H^\ast(N)$ if $n$ is odd and greater than $1$.
    Since $x_1x_2 = -x_2^2$ and $x_1^{n+1} = -2x_2 x_1^n$ in $H^\ast(M)$,
    for any linear element $cx_1+dx_2 \in H^\ast(M)$, we have
    \begin{align*}
        (cx_1+dx_2)^{n+1}&=\sum_{i=0}^{n+1}{{n+1}\choose{i}}(cx_1)^i (dx_2)^{n+1-i}\\
        &=(cx_1)^{n+1}+\sum_{i=0}^{n}(-1)^i{{n+1}\choose{i}}c^id^{n+1-i}x_2^{n+1}\\
        &=2c^{n+1}x_2^{n+1}+\sum_{i=0}^{n}{{n+1}\choose{i}}(-c)^id^{n+1-i}x_2^{n+1}\\
        &=(c^{n+1}+(-c+d)^{n+1})x_2^{n+1}
    \end{align*}
    in $H^\ast(M)$. Since
    $x_2^{n+1}$ does not vanish in $H^\ast(M)$, $(cx_1+dx_2)^{n+1}$ cannot be zero in $H^\ast(M)$ for odd $n>1$.
    Similarly, we can see that
    $$
    (cy_1 + dy_2)^{n+1} = \left(\frac{c^{n+1} + (c-2d)^{n+1}}{2} \right)y_1^{n+1}
    $$
    cannot be zero in $H^\ast(N)$ for odd $n>1$. Since there is a linear element in $H^\ast(M_{a,\mathbf{0}})$ whose $(n+1)$-st power vanishes, neither can $H^\ast(M_{a,\mathbf{0}})$ be isomorphic to $H^\ast(M)$ nor $H^\ast(N)$ for odd $n>1$.

    We finally claim that $H^\ast(M)$ is not isomorphic to $H^\ast(N)$.
    Suppose that there is a grading preserving isomorphism $$\phi \colon H^\ast(M)=\Z[x_1,x_2]/\mathcal{I}_M \to H^\ast(N)=\Z[y_1,y_2]/\mathcal{I}_N$$ which lifts to a grading preserving isomorphism $\bar{\phi}: \Z[x_1,x_2] \to \Z[y_1,y_2]$ with $\bar{\phi}(\mathcal{I}_M)=\mathcal{I}_N$. Since $\bar{\phi}(\mathcal{I}_M)=\mathcal{I}_N$ and $n>1$, we have
    \begin{equation}\label{add_1 added added}
        \bar{\phi}(x_2(x_1+x_2))=\alpha y_2(2y_1+y_2),
    \end{equation}
    where $\alpha$ is a nonzero integer. The prime divisors of the left hand side of~\eqref{add_1 added added} generate $\Z[x_1,x_2]$ as a $\Z$-algebra, whereas the prime divisors of the right hand side of~\eqref{add_1 added added} do not generate $\Z[y_1,y_2]$. Therefore, $H^\ast(M)$ and $H^\ast(N)$ cannot be isomorphic.
    \end{proof}

    \begin{corollary} \label{cohomological rigidity 1}
        Two quasitoric manifolds over $\Delta^n \times \Delta^1$ are homeomorphic if their cohomology rings are isomorphic as graded rings. In particular,
        \begin{enumerate}
            \item if $n$ is even, then $M$ is homeomorphic to a generalized Bott manifold $M_{a,\mathbf{0}}$ or $M_{0,\mathbf{b}}$, and
            \item if $n$ is odd, then $M$ is homeomorphic to a generalized Bott manifold or $M_{1,(2,0,\ldots,0)} \cong \CP^{n+1}\# \CP^{n+1}$ or $M_{2,(1,0,\ldots,0)}$.
        \end{enumerate}
    \end{corollary}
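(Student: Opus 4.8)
The plan is to assemble the results of this section together with Theorem~\ref{thm:classification of CMS}, so the proof is mostly bookkeeping. First I would establish the explicit list in items (1) and (2). Up to equivalence any quasitoric manifold $M$ over $\Delta^n\times\Delta^1$ equals $M_{a,\bb}$, and by Remark~\ref{rmk:sign} we may take $a\ge0$ and $b_i\ge0$, while non-singularity forces $ab_i\in\{0,2\}$ for every $i$. If $ab_i=0$ for all $i$, then $a=0$ or $\bb=\mathbf 0$, so $M$ is a two-stage generalized Bott manifold, namely $M_{a,\mathbf 0}$ or $M_{0,\bb}$, by Proposition~\ref{prop:condition to be GBM}. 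If $ab_{i_0}=2$ for some $i_0$, then $a\in\{1,2\}$; since each $b_i$ still satisfies $ab_i\in\{0,2\}$, all nonzero $b_i$ are equal, and by reordering the facets $F_1,\dots,F_n$ of $\Delta^n$ (which permutes the $b_i$ and yields an equivalent manifold) we may assume $\bb=(b,\dots,b,0,\dots,0)$ with $ab=2$. For $n=1$ this case is already covered by Example~\ref{exam:hirz}; for $n>1$, Lemma~\ref{lem:class a=pm2} shows $M$ is homeomorphic to $M_{a,\mathbf 0}$ when $n$ is even, and to one of $M_{a,\mathbf 0}$, $M_{1,(2,0,\dots,0)}\cong\CP^{n+1}\#\CP^{n+1}$, or $M_{2,(1,0,\dots,0)}$ when $n$ is odd. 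This proves (1) and (2).

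Next I would deduce the rigidity statement. By the previous step every quasitoric manifold over $\Delta^n\times\Delta^1$ is homeomorphic to a \emph{model}: either a two-stage generalized Bott manifold over $\Delta^n\times\Delta^1$, or, when $n$ is odd, one of the two extra manifolds $\CP^{n+1}\#\CP^{n+1}$ and $M_{2,(1,0,\dots,0)}$. Hence it suffices to check that two models with isomorphic graded cohomology rings are already diffeomorphic. For two generalized Bott manifolds this is exactly Theorem~\ref{thm:classification of CMS}, and for the two extras it is the last assertion of Theorem~\ref{thm:classify NGB for n=1}, which also records that each extra has cohomology ring distinct from that of every $M_{a,\mathbf 0}$. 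The one remaining comparison—an extra against a generalized Bott manifold of the form $M_{0,\bb}$ (a $\CP^n$-bundle over $\CP^1$)—I would settle with the ring invariant ``there is a nonzero degree-two class whose square vanishes'': from~\eqref{cohomology ring} the class $x_2$ satisfies $x_2^2=0$ in $H^\ast(M_{0,\bb})$, whereas a short degree-four computation in the presentations~\eqref{cohomology ring of NGB} of $H^\ast(\CP^{n+1}\#\CP^{n+1})$ and of $H^\ast(M_{2,(1,0,\dots,0)})$ shows that no nonzero linear element squares to zero (for $n>1$ because $x_1^2$ and $x_1x_2$ are linearly independent in degree four, and for $n=1$ because the intersection form of $\CP^2\#\CP^2$ is definite while that of any $\CP^1$-bundle over $\CP^1$ is not). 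Combining these, models with isomorphic cohomology rings are always diffeomorphic; hence if $H^\ast(M)\cong H^\ast(M')$ then $M$ and $M'$ are homeomorphic to diffeomorphic models, and therefore to each other.

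The routine parts are the reduction to $M_{a,\bb}$ and the repeated appeals to Lemmas~\ref{connected sum} and~\ref{lem:class a=pm2} and to Theorem~\ref{thm:classify NGB for n=1}; the only genuinely new input is the degree-four computation ruling out a cohomological coincidence between the two extras and the bundles $M_{0,\bb}$. The only point needing care is the degenerate case $n=1$, where $M_{1,(2,0,\dots,0)}$ and $M_{2,(1,0,\dots,0)}$ both collapse to $\CP^2\#\CP^2$ and Example~\ref{exam:hirz} supplies the three homeomorphism types directly.
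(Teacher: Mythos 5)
Your proposal is correct, and its overall architecture matches the paper's: reduce every quasitoric manifold over $\Delta^n\times\Delta^1$ to one of the models listed in (1)--(2) via the normal form $M_{a,\bb}$ together with Lemma~\ref{lem:class a=pm2}, then check pairwise that models with isomorphic graded cohomology rings are homeomorphic, citing Theorem~\ref{thm:classification of CMS} for the generalized Bott manifolds and Theorem~\ref{thm:classify NGB for n=1} for the two extra manifolds. The one place where you genuinely diverge is the comparison of an extra ($\CP^{n+1}\#\CP^{n+1}$ or $M_{2,(1,0,\ldots,0)}$) with a bundle $M_{0,\bb}$: the paper disposes of this by invoking Proposition~\ref{prop:GB_int} (if $H^\ast(N)$ is isomorphic to the cohomology of a $\CP^n$-bundle over $\CP^1$ with $n>1$, then $N$ is already equivalent to a generalized Bott manifold), whereas you use the elementary invariant ``existence of a nonzero degree-two class with vanishing square,'' which holds in $H^\ast(M_{0,\bb})$ (namely $x_2$) but fails in the cohomology of either extra because $x_1^2$ and $x_1x_2$ stay linearly independent in degree four. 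Your computation checks out and is more self-contained, avoiding the long case analysis behind Proposition~\ref{prop:GB_int}; the paper's route instead reuses machinery it needs anyway for the $m>1$ sections. One small point that you and the paper both leave implicit: Theorem~\ref{thm:classification of CMS} as stated compares two projectivizations over the \emph{same} base $\CP^n$, so the comparison of $M_{a,\mathbf{0}}$ (a $\CP^1$-bundle over $\CP^n$) with $M_{0,\bb}$ (a $\CP^n$-bundle over $\CP^1$) strictly requires the general two-stage classification from \cite{CMS} --- or, again, your square-zero invariant, which shows such an isomorphism forces $a=0$. This is a presentational remark rather than a gap in your argument.
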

    \begin{proof}
    Let $M$ and $N$ be quasitoric manifolds over $\Delta^n \times \Delta^1$. Assume that $H^\ast(M) \cong H^\ast(N)$. When $n=1$, $M$ is homeomorphic to $N$ by Example~\ref{exam:hirz}.

    Now consider the case when $n>1$.
    If $M$ is equivalent to a generalized Bott manifold $M_{0,\bb}$, then $N$ is also equivalent to a generalized Bott manifold by Proposition~\ref{prop:GB_int}, so $M$ and $N$ are homeomorphic by Theorem~\ref{thm:classification of CMS}.

    If $M$ is equivalent to a generalized Bott manifold $M_{a,\textbf{0}}$, then
    $N:=M_{a',\bb'}$ must be homeomorphic to a generalized Bott manifold $M_{a',\textbf{0}}$
    because $H^\ast(M_{a,\mathbf{0}})$ cannot be isomorphic to $H^\ast(M_{a',(b',0,\ldots,0)})$ as in the proof of Theorem~\ref{thm:classify NGB for n=1}.
    Therefore $M$ and $N$ are homeomorphic
    by Theorem~\ref{thm:classification of CMS}.

    If neither $M$ nor $N$ is equivalent to a generalized Bott manifold, then the assertion is true by Theorem~\ref{thm:classify NGB for n=1}.

    Hence, for any case, $M$ is homeomorphic to $N$.
    The latter statement of the corollary immediately follows
    Theorem~\ref{thm:classify NGB for n=1}.
    \end{proof}

    The above corollary proves a part of Theorem~\ref{classify}.

\begin{example}
        There are quasitoric manifolds homeomorphic but not equivalent to generalized Bott manifolds. For example, $M_{2,(1,1,0,\ldots,0)}$ is homeomorphic to a generalized Bott manifold $M_{2,(0,\ldots,0)}$. But $M_{2,(1,1,0,\ldots,0)}$ is not equivalent to a generalized Bott manifold by Proposition~\ref{prop:condition to be GBM}.
\end{example}

\section{Quasitoric manifolds over $\Delta^n \times \Delta^m$ with $n,m>1$} \label{sec:quasi mfds over D^m x D^n}

    As is defined in Section~\ref{sec:quasitoric manifolds of $b_2=2$}, let $M_{\va,\bb}$ be a quasitoric manifold over $\Delta^n \times \Delta^m$ with $n,m >1$ whose characteristic matrix is of the form \eqref{char matrix by reordering}. Define two vectors $\vs$ and $\vr$ by
    \begin{equation}\label{vectors s and r}
    \vs:=(\underbrace{2,\ldots,2}_s,0,\ldots,0) \in \Z^m \text{ and } \vr:=(\underbrace{1,\ldots,1}_r,0,\ldots,0) \in \Z^n,
    \end{equation}
    where $1 \leq s \leq m$ and $1 \leq r \leq n$. If a quasitoric manifold $M$ with $\betti_2=2$ is not equivalent to a generalized Bott manifold, then $M$ is equivalent to $M_{\vs,\vr}$ for some $\vs$ and $\vr$.

    In this section we prove Theorem~\ref{classify} and Theorem~\ref{main} when $n,m>1$. In doing so, we follow the same strategy as the one used in Section~\ref{sec:classification of quasitoric manifolds when m=1}. Assume that we have two quasitoric manifolds $M_{\va,\bb}$ and $M_{\va',\bb'}$. If there is a $\theta$-equivariant homeomorphism $\varphi$ from $\mathcal{Z}_{\Delta^n \times \Delta^m}$ with the subtorus $K_{\va,\bb} \subset T^{n+m+2}$ action to $\mathcal{Z}_{\Delta^n \times \Delta^m}$ with the subtorus $K_{\va',\bb'} \subset T^{n+m+2}$ action, where $\theta$ is an isomorphism from $K_{\va,\bb}$ to $K_{\va',\bb'}$, then $\varphi$ induces a homeomorphism $$\overline{\varphi}:M_{\va,\bb'}=\mathcal{Z}_{\Delta^n \times \Delta^m}/K_{\va,\bb} \rightarrow M_{\va',\bb'}=\mathcal{Z}_{\Delta^n \times \Delta^m}/K_{\va',\bb'}.$$

    \begin{lemma}\label{lem:NGB}
        Two quasitoric manifolds $M_{\vs,\vr}$ and $M_{\vs',\vr'}$ are homeomorphic if the two pairs $(\vs,\vr)$ and $(\vs',\vr')$ satisfy
        \begin{center}
            $s=s'$ or $s+s'=m+1$, and\\
            $r=r'$ or $r+r'=n+1$,
        \end{center}
        where $\vs,\vs' \in \Z^m$ and $\vr,\vr' \in \Z^n$ are vectors as in \eqref{vectors s and r}.
    \end{lemma}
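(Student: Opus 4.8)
The plan is to follow the strategy of Section~\ref{sec:classification of quasitoric manifolds when m=1}: realize each homeomorphism as a $\theta$-equivariant self-homeomorphism $\varphi$ of the moment angle manifold $\mathcal{Z}_{\Delta^n\times\Delta^m}=S^{2n+1}\times S^{2m+1}$ (with coordinates $(\vw,\vz)=(w_1,\dots,w_{n+1},z_1,\dots,z_{m+1})$) carrying the $K_{\vs,\vr}$-action to the $K_{\vs',\vr'}$-action, which then descends to a homeomorphism of the quotients $M_{\vs,\vr}\to M_{\vs',\vr'}$ as in the diagram after Remark~\ref{free action}. First I would reduce to two elementary ``flips'':
\begin{itemize}
\item[(a)] $M_{\vs,\vr}\cong M_{\vs',\vr}$ whenever $s+s'=m+1$ (the vector $\vr$ unchanged), and
\item[(b)] $M_{\vs,\vr}\cong M_{\vs,\vr'}$ whenever $r+r'=n+1$ (the vector $\vs$ unchanged).
\end{itemize}
Given any pair $(\vs,\vr)$, $(\vs',\vr')$ satisfying the hypotheses of the lemma, one reaches $(\vs',\vr')$ from $(\vs,\vr)$ by composing at most one flip of type (a) and at most one of type (b) (when $s=s'$, resp.\ $r=r'$, no flip is needed), so it suffices to treat (a) and (b).

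For the flip (b) I would take the reparametrization $\theta\colon\mu(t_1,t_2)\mapsto\mu'(t_1t_2,t_2^{-1})$; since the associated integral matrix lies in $\GL(2,\Z)$, this is an isomorphism $K_{\vs,\vr}\to K_{\vs,\vr'}$. Pair it with
\[
\varphi(\vw,\vz)=(w_{r+1},\dots,w_{n+1},\,w_1,\dots,w_r,\ z_1,\dots,z_s,\,\overline{z_{s+1}},\dots,\overline{z_{m+1}}),
\]
which permutes the $\vw$-coordinates so that the $t_1$-twisted ones come first and complex-conjugates the last $m+1-s$ coordinates of $\vz$. For the flip (a) I would take $\theta\colon\mu(t_1,t_2)\mapsto\mu'(t_1^{-1},t_1^{2}t_2)$ (again a $\GL(2,\Z)$-change of basis) together with
\[
\varphi(\vw,\vz)=(w_1,\dots,w_r,\,\overline{w_{r+1}},\dots,\overline{w_{n+1}},\ z_{s+1},\dots,z_{m+1},\,z_1,\dots,z_s),
\]
which interchanges the two blocks of $\vz$ and conjugates the last $n+1-r$ coordinates of $\vw$. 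In each case $\varphi$ is manifestly a homeomorphism of $S^{2n+1}\times S^{2m+1}$ (a coordinate permutation, respecting the product structure, composed with complex conjugations), and the identity $\varphi(\mu(t_1,t_2)\cdot x)=\theta(\mu(t_1,t_2))\cdot\varphi(x)$ is verified by a direct coordinate-by-coordinate computation, exactly as in the proof of Lemma~\ref{lem:class a=pm2}, using the non-singularity relation $a_jb_i=2$ (here all nonzero $a_j=2$, $b_i=1$) to match the $t_1^{2}$-factors on the $\vz$-side.

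I do not expect a genuine obstacle here; the only work is bookkeeping. One must track, after the change of basis $(t_1,t_2)\rightsquigarrow(s_1,s_2)$, which block of coordinates each circle $s_1$, $s_2$ twists, and check that the sign discrepancies $t_1\leftrightarrow t_1^{-1}$ and $t_2\leftrightarrow t_2^{-1}$ that arise are precisely absorbed by the conjugations built into $\varphi$; at the level of $M_{\vs,\vr}$ these sign changes are in any case harmless by Remark~\ref{rmk:sign}. It is worth noting that, in contrast with the reduction step Lemma~\ref{lem:class a=pm2}(2), no quaternionic multiplication is needed for these flips: permutations together with conjugations of individual coordinates suffice.
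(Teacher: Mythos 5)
Your proposal is correct and coincides with the paper's own proof: the paper likewise reduces to the two flips $r\mapsto n+1-r$ and $s\mapsto m+1-s$, uses exactly the reparametrizations $\mu(t_1,t_2)\mapsto\mu'(t_1t_2,t_2^{-1})$ and $\mu(t_1,t_2)\mapsto\mu''(t_1^{-1},t_1^{2}t_2)$, pairs them with the same coordinate permutations and conjugations on $S^{2n+1}\times S^{2m+1}$, and handles the combined case by composition. The equivariance checks go through exactly as you describe, with no quaternionic multiplication needed.
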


    \begin{proof}
        As we have seen in Remark~\ref{free action}, the moment angle manifold $\mathcal{Z}_{\Delta^n \times \Delta^m}$ is
        $$S^{2n+1}\times S^{2m+1}=\{(\vw,\vz) \in \C^{n+1} \times \C^{m+1} \colon |\vw|=1,~|\vz|=1\},$$
        and the subtorus $K_{\vs,\vr}$ in $T^{n+m+2}$ is represented by the unimodular subgroup of $\Z^{n+m+2}$ spanned by $$\uu_s:=(\underbrace{1,\ldots,1}_{n+1},\underbrace{2,\ldots,2}_s,0,\ldots,0) \mbox{ and } \vv_r:=(\underbrace{1,\ldots,1}_r,0,\ldots,0,\underbrace{1,\ldots,1}_{m+1}).$$
        That is, there is an isomorphism $\mu: T^2 \to K_{\vs,\vr}$ defined by the matrix $\left(\begin{array}{cc}\uu_s^T &\vv_r^T \end{array}\right)$.

        First consider the case when $\vs=\vs'$, $r \leq \lfloor \frac{n+1}{2} \rfloor$, and $r'=n+1-r$.
        Then we have an isomorphism $\mu': T^2 \to K_{\vs',\vr'}$ defined by the matrix $\left(\begin{array}{cc}\uu_s^T & \vv_{n+1-r}^T\end{array}\right)$.

        We set $(\vw,\vz) = (w_1, \ldots, w_{n+1}, z_1, \ldots, z_{m+1}) \in S^{2n+1} \times S^{m+1} \subset \C^{n+1} \times \C^{m+1}.$
        Now we define an isomorphism $\theta\colon K_{\vs,\vr} \to K_{\vs',\vr'}$ by
        $\mu(t_1,t_2) \mapsto \mu'(t_1t_2,t_2^{-1})$
        and
        a map $\varphi: S^{2n+1} \times S^{2m+1} \rightarrow S^{2n+1} \times S^{2m+1}$ by
        \begin{equation*}
            \begin{split}
            &\varphi(w_1,\ldots,w_{n+1},z_1,\ldots,z_{m+1})\\
            &\qquad=(w_{r+1},\ldots,w_{n+1},w_1,\ldots,w_r,z_1,\ldots,z_s,\overline{z_{s+1}},\ldots,\overline{z_{m+1}}).\\
            \end{split}
        \end{equation*}

        Let us check that $\varphi$ is $\theta$-equivariant;
        \begin{align*}
        &\varphi(\mu(t_1,t_2)\cdot(\vw,\vz))\\
        &=\varphi(t_1t_2w_1,\ldots,t_1t_2w_r,t_1w_{r+1},\ldots,t_1w_{n+1},\\
        &\qquad\qquad\qquad t_1^2t_2z_1,\ldots,t_1^2t_2z_s,t_2z_{s+1},\ldots,t_2z_{m+1})\\
        &=(t_1w_{r+1},\ldots,t_1w_{n+1},t_1t_2w_1,\ldots,t_1t_2w_r,\\
        &\qquad\qquad\qquad t_1^2t_2z_1,\ldots,t_1^2t_2z_s,t_2^{-1}\overline{z_{s+1}},\ldots,t_2^{-1}\overline{z_{m+1}})\\
        &=\mu'(t_1t_2,t_2^{-1})\cdot\varphi(\vw,\vz)\\
        &=\theta(\mu(t_1,t_2))\cdot\varphi(\vw,\vz)
        \end{align*}
        Hence $\varphi$ induces a homeomorphism $\overline{\varphi}$ from $M_{\vs,\vr}$ to $M_{\vs',\vr'}$.

        We now consider the case when $s \leq \lfloor \frac{m+1}{2} \rfloor$, $s'=m+1-s$, and $\vr=\vr'$.
        Then we have an isomorphism $\mu'': T^2 \to K_{\vs',\vr'}$ defined by the matrix $\left(\begin{array}{cc} \uu_{m+1-2}^T & \vv_r^T \end{array}\right)$.

        We define an isomorphism $\theta\colon K_{\vs,\vr} \to K_{\vs',\vr'}$ by
        $\mu(t_1,t_2) \mapsto \mu''(t_1^{-1},t_1^2t_2)$
        and
        a map
        \begin{equation*}
            \begin{split}
            &
            \varphi(w_1,\ldots,w_{n+1},z_1,\ldots,z_{m+1})\\
            &\qquad=(w_1,\ldots,w_r,\overline{w_{r+1}},\ldots,\overline{w_{n+1}},z_{s+1},\ldots,z_{m+1},z_1,\ldots,z_s).\\
            \end{split}
        \end{equation*}

        Then,
        \begin{align*}
        &\varphi(\mu(t_1,t_2)\cdot(\vw,\vz))\\
        &=(t_1t_2w_1,\ldots,t_1t_2w_r,t_1^{-1}\overline{w_{r+1}},\ldots,t_1^{-1}\overline{w_{n+1}},\\
        &\qquad\qquad\qquad t_2z_{s+1},\ldots,t_2z_{m+1},t_1^2t_2z_1,\ldots,t_1^2t_2z_s)\\
        &=\mu''(t_1^{-1},t_1^2t_2)\cdot\varphi(\vw,\vz)\\
        &=\theta(\mu(t_1,t_2))\cdot\varphi(\vw,\vz).
        \end{align*}
        Thus $\varphi$ is a $\theta$-equivariant homeomorphism which induces a homeomorphism $\overline{\varphi}$ from $M_{\vs,\vr}$ to $M_{\vs',\vr'}$.

        Finally, we note that the case when $r = n+1 - r'$ and $s = m+1 - s'$ immediately follows from the composition of the above two cases.
    \end{proof}

    \begin{theorem}\label{Cohomological rigidity 2}
        Let $M_{\vs,\vr}$ and $M_{\vs',\vr'}$ be quasitoric manifolds as defined above. Then the following are equivalent.
        \begin{enumerate}
            \item $s=s'$ or $s+s'=m+1$, and $r=r'$ or $r+r'=n+1$.
            \item $H^\ast(M_{\vs,\vr})$ and $H^\ast(M_{\vs',\vr'})$ are isomorphic.
            \item $M_{\vs,\vr}$ and $M_{\vs',\vr'}$ are homeomorphic.
        \end{enumerate}
    \end{theorem}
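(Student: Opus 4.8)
The plan is to establish the cycle $(3)\Rightarrow(2)\Rightarrow(1)\Rightarrow(3)$. Here $(3)\Rightarrow(2)$ is immediate, since homeomorphic manifolds have isomorphic cohomology rings, and $(1)\Rightarrow(3)$ is exactly Lemma~\ref{lem:NGB}. The whole content is in $(2)\Rightarrow(1)$, and I would argue it as follows. By~\eqref{cohomology ring of NGB} write $H^\ast(M_{\vs,\vr})=\Z[x_1,x_2]/\mathcal{I}$ with $\mathcal{I}=\langle f_1,f_2\rangle$, where $f_1=x_1^{n+1-r}(x_1+x_2)^r$ and $f_2=x_2^{m+1-s}(2x_1+x_2)^s$, and likewise $H^\ast(M_{\vs',\vr'})=\Z[y_1,y_2]/\mathcal{J}$ with $\mathcal{J}=\langle f_1',f_2'\rangle$, $f_1'=y_1^{n+1-r'}(y_1+y_2)^{r'}$, $f_2'=y_2^{m+1-s'}(2y_1+y_2)^{s'}$. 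Exactly as in the proof of Proposition~\ref{prop:GB_int}, a graded ring isomorphism lifts to a graded isomorphism $\bar\phi\colon\Z[x_1,x_2]\to\Z[y_1,y_2]$ with $\bar\phi(\mathcal{I})=\mathcal{J}$ and $\bar\phi(x_i)=g_{i1}y_1+g_{i2}y_2$, where the matrix $(g_{ij})$ has determinant $\pm1$.

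The guiding idea is that $\bar\phi$ is a linear substitution and $\Z[y_1,y_2]$ is a unique factorization domain, so the multisets of linear factors of the generators must match. I would first treat $n\neq m$, say $n>m$. In degree $m+1$ the ideal $\mathcal{J}$ is the rank-one free module $\Z f_2'$, and $f_2$ is primitive of degree $m+1$ (its coefficient of $x_2^{m+1}$ is $1$), so $\bar\phi(f_2)=\pm f_2'$. Since $1\le s,s'\le m$, both sides are products of exactly two distinct linear forms; comparing factorizations shows that $\bar\phi$ sends $x_2$ and $2x_1+x_2$ to scalar multiples of $y_2$ and $2y_1+y_2$ in some order and forces $s=s'$ or $s+s'=m+1$. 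Combined with $\det(g_{ij})=\pm1$ this leaves finitely many candidates for $(g_{ij})$, in each of which $\bar\phi(f_1)=\pm(y_1+ay_2)^{n+1-r}(y_1+by_2)^r$ for integers $a,b$ with $|a-b|=1$. Next, $\bar\phi$ carries $\Z[x_1,x_2]_{n-m}f_2$ onto $\Z[y_1,y_2]_{n-m}f_2'$ and the quotient $\mathcal{I}_{n+1}/\Z[x_1,x_2]_{n-m}f_2$ is free of rank one on the class of $f_1$ (similarly on the $y$-side, because $f_2$ and $f_2'$ contribute no $x_1^{n+1}$- resp.\ $y_1^{n+1}$-term), so $\bar\phi(f_1)=\pm f_1'+g\,f_2'$ for some homogeneous $g$ of degree $n-m$. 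Specializing $y_2\mapsto-2y_1$ kills $f_2'$ and yields $(1-2a)^{n+1-r}(1-2b)^r=\pm1$, hence $\{a,b\}=\{0,1\}$; so $\bar\phi(f_1)$ is $\pm y_1^{n+1-r}(y_1+y_2)^r$ or $\pm(y_1+y_2)^{n+1-r}y_1^r$. Finally $\bar\phi(f_1)\mp f_1'=g\,f_2'$ (the sign being fixed by specializing $y_2\mapsto0$): if $r\notin\{r',\,n+1-r'\}$ this difference is nonzero, and factoring out the common powers of $y_1$ and $y_1+y_2$ leaves a factor $(y_1+y_2)^k-y_1^k$ with $k\ge1$ that has multiplicity exactly $1$ along $y_2=0$ and at most $1$ along $2y_1+y_2=0$; since $m>1$ forces $\max(m+1-s',s')\ge2$, this cannot be divisible by $f_2'=y_2^{m+1-s'}(2y_1+y_2)^{s'}$, a contradiction. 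Hence $r=r'$ or $r+r'=n+1$. The case $n<m$ is symmetric, with the roles of $f_1,f_2$ interchanged.

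It remains to handle $n=m$, where both generators have degree $n+1$, the sets $\{f_1,f_2\}$ and $\{f_1',f_2'\}$ are $\Z$-bases of $\mathcal{I}_{n+1}$, $\mathcal{J}_{n+1}$, and so $\bar\phi(f_1)=\alpha f_1'+\beta f_2'$, $\bar\phi(f_2)=\gamma f_1'+\delta f_2'$ with $\left(\begin{smallmatrix}\alpha&\beta\\\gamma&\delta\end{smallmatrix}\right)\in\GL(\Z,2)$. Here I would again use that $\bar\phi(f_1)$ and $\bar\phi(f_2)$ are products of two linear-form powers, invoke the non-singularity relations $1-d_ic_j=\pm1$, and specialize $(y_1,y_2)$ to $(1,1)$ and $(1,-1)$ — precisely the device of Case~2 of the proof of Proposition~\ref{prop:GB_int} — to rule out every configuration except $\beta=0$ or $\gamma=0$ (possibly after interchanging the two generators); either of these reduces to the primitivity-plus-factorization analysis above and gives $s\in\{s',m+1-s'\}$ together with $r\in\{r',n+1-r'\}$, which is $(1)$.

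The hard part, I expect, is the coupling between the image of one Stanley--Reisner generator and the \emph{other} generator: the correction term $g\,f_2'$ when $n\neq m$, and the full $\GL(\Z,2)$ ambiguity when $n=m$. Without it, $(2)\Rightarrow(1)$ would follow at once from unique factorization; the multiplicity computations along the lines $y_2=0$ and $2y_1+y_2=0$, together with the $(y_1,y_2)=(1,\pm1)$ specializations, are exactly what is needed to neutralize it. A secondary, purely bookkeeping difficulty is keeping track of the several sign and sub-cases for $(g_{ij})$, but these are routine once the factorization constraints are in place.
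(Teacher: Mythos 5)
Your proposal follows essentially the same route as the paper: (1)$\Rightarrow$(3) is Lemma~\ref{lem:NGB}, (3)$\Rightarrow$(2) is trivial, and (2)$\Rightarrow$(1) is proved by lifting the isomorphism to $\Z[x_1,x_2]\to\Z[y_1,y_2]$, matching prime factorizations of the lower-degree generator, and then controlling the contribution of the other generator, with the same split into the cases $n>m$, $n<m$, $n=m$; in fact your treatment of the correction term $g\,f_2'$ in the $n\neq m$ case is more detailed than the paper's, which disposes of the determination of $r$ versus $r'$ with a ``one can check.'' The only slip is in the $n=m$ case: the evaluations that kill one generator at a time are $(y_1,y_2)=(1,-1)$ (which annihilates $y_1^{n+1-r'}(y_1+y_2)^{r'}$) and $(1,-2)$ (which annihilates $y_2^{n+1-s'}(2y_1+y_2)^{s'}$), not $(1,1)$ and $(1,-1)$ as you wrote; with that correction your argument coincides with the paper's, which uses exactly these two substitutions together with $\det(g_{ij})=\pm1$ to force the off-diagonal coefficients of $\bar\phi(f_1),\bar\phi(f_2)$ in the basis $f_1',f_2'$ to vanish.
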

    \begin{proof}
        By Lemma~\ref{lem:NGB}, it suffices to prove the implication (2) $\Rightarrow$ (1).
        Let $\mathcal{I}\subset\Z[x_1,x_2]$ be the ideal generated by the homogeneous polynomials
        $x_1^{n+1-r}(x_1+x_2)^{r}$ and $x_2^{m+1-s}(2x_1+x_2)^{s}$, and let $\mathcal{J}\subset\Z[y_1,y_2]$ be also the ideal generated by $y_1^{n+1-r'}(y_1+y_2)^{r'}$ and $y_2^{m+1-s'}(2y_1+y_2)^{s'}$. Then we have
        $$H^\ast(M_{\vs,\vr})=\Z[x_1,x_2]/\mathcal{I} \mbox{ and } H^\ast(M_{\vs',\vr'})=\Z[y_1,y_2]/\mathcal{J}.$$
        Then the cohomology ring isomorphism $\phi: H^\ast(M_{\vs,\vr}) \to H^\ast(M_{\vs',\vr'})$ lifts to a grading preserving isomorphism $\bar{\phi}: \Z[x_1,x_2] \to \Z[y_1,y_2]$ with $\bar{\phi}(\mathcal{I})=\mathcal{J}$. We divide the proof into three cases: (1) $n>m$, (2) $n<m$, and (3) $n=m$.\\

        \textbf{CASE 1: $n>m$}

        Since $\bar{\phi}(x_2^{m+1-s}(2x_1+x_2)^{s}) \in \mathcal{J}$ and $n>m$, we have
        \begin{equation}\label{equation of NGB when n>m added added}
            \bar{\phi}(x_2^{m+1-s}(2x_1+x_2)^{s})=\alpha y_2^{m+1-s'}(2y_1+y_2)^{s'}
        \end{equation}
        for some nonzero integer $\alpha$. Comparing the multiplicities of the prime divisors of both sides of~\eqref{equation of NGB when n>m added added}, we can easily see that
        $s=s'$ or $s=m+1-s'$. Thus $\bar{\phi}(x_2)$ is either $\pm y_2$ or $\pm(2y_1+y_2)$. Then we obtain the following four cases:
        when $s=s'$,
$$
        \left\{
          \begin{array}{ll}
            \bar{\phi}(x_1) = \mp(y_1+y_2) \quad \text{ and } \quad \bar{\phi}(x_2) = \pm y_2   , & \hbox{\textbf{(i)} } \\
            \bar{\phi}(x_1) = \pm y_1 \quad \text{ and } \quad \bar{\phi}(x_2) = \pm y_2   , & \hbox{\textbf{(ii)} } \\
          \end{array}
        \right.
$$ and when $s+s' = m+1$,
$$
        \left\{
          \begin{array}{ll}
            \bar{\phi}(x_1) = \mp(y_1+y_2) \quad \text{ and } \quad \bar{\phi}(x_2) = \pm(2y_1+y_2)   , & \hbox{\textbf{(iii)}} \\
            \bar{\phi}(x_1) = \mp y_1 \quad \text{ and } \quad \bar{\phi}(x_2) =\pm(2y_1+y_2)  . & \hbox{\textbf{(iv)} } \\
          \end{array}
        \right.
$$
        One can check that the cases \textbf{(i)} and \textbf{(iii)} imply that $r+r'=n+1$ and the cases \textbf{(ii)} and \textbf{(iv)} imply that $r=r'$, which proves the implication (2) $\Rightarrow$ (1) in this case.\\

        \textbf{CASE 2: $n<m$}

        This case is quite analogous to the case 1. So we can skip the proof.\\

        \textbf{CASE 3: $n=m$}

        Since $\bar{\phi}(\mathcal{I})=\mathcal{J}$, we have
        \begin{equation}\label{eqn in the system of NGB when n=m added}
        \begin{split}
        \bar{\phi}(x_1^{n+1-r}(x_1+x_2)^r)&=\alpha y_1^{n+1-r'}(y_1+y_2)^{r'}+\alpha^\prime y_2^{n+1-s'}(2y_1+y_2)^{s'},\\
        \bar{\phi}(x_2^{n+1-s}(2x_1+x_2)^s)&=\beta y_1^{n+1-r'}(y_1+y_2)^{r'}+\beta^\prime y_2^{n+1-s'}(2y_1+y_2)^{s'},
        \end{split}
        \end{equation}
        where $\alpha$, $\alpha'$, $\beta$, and $\beta'$ are integers. Note that either $\alpha$ or $\alpha'$ is nonzero, and either $\beta$ or $\beta'$ is nonzero. We first show that $\alpha'$ and $\beta$ are zero and then prove the theorem in this case.

        Plugging $\bar{\phi}(x_i)=g_{i1}y_1+g_{i2}y_2$, $i=1,2$, into~\eqref{eqn in the system of NGB when n=m added}, we have
        \begin{equation}\label{eqn 1 in the system of NGB when n=m added}
            \begin{split}
            &(g_{11}y_1+g_{12}y_2)^{n+1-r}((g_{11}+g_{21})y_1+(g_{12}+g_{22})y_2)^{r}\\
            &\qquad=\alpha y_1^{n+1-r'}(y_1+y_2)^{r'}+\alpha^\prime y_2^{n+1-s'}(2y_1+y_2)^{s'}
            \end{split}
        \end{equation}
        and
        \begin{equation}\label{eqn 2 in the system of NGB when n=m added}
            \begin{split}
            &(g_{21}y_1+g_{22}y_2)^{n+1-s}((2g_{11}+g_{21})y_1+(2g_{12}+g_{22})y_2)^{s}\\
            &\qquad=\beta y_1^{n+1-r'}(y_1+y_2)^{r'}+\beta^\prime y_2^{n+1-s'}(2y_1+y_2)^{s'},
            \end{split}
        \end{equation}
        where the determinant of $G=\left(\begin{array}{cc}g_{11}&g_{12}\\g_{21}&g_{22}\end{array}\right)$ is $\pm 1$.

        Suppose that none of $\alpha$, $\alpha'$, $\beta$, and $\beta'$ are zero. Then by comparing the coefficients of $y_1^{n+1}$ and $y_2^{n+1}$ on both sides of~\eqref{eqn 1 in the system of NGB when n=m added}, we have $\alpha=g_{11}^{n+1-r}(g_{11}+g_{21})^{r}$ and $\alpha'=g_{12}^{n+1-r}(g_{12}+g_{22})^{r}$. By comparing the coefficients of $y_1^{n+1}$ and $y_2^{n+1}$ on both sides of~\eqref{eqn 2 in the system of NGB when n=m added}, we have $\beta=g_{21}^{n+1-s}(2g_{11}+g_{21})^{s}$ and $\beta'=g_{22}^{n+1-s}(2g_{12}+g_{22})^{s}$. Hence we have
         the following system of polynomial equations
        \begin{equation}\label{eqn 1 in the system of NGB when n=m}
            \begin{split}
            &(g_{11}y_1+g_{12}y_2)^{n+1-r}((g_{11}+g_{21})y_1+(g_{12}+g_{22})y_2)^{r}\\
            &\qquad=g_{11}^{n+1-r}(g_{11}+g_{21})^{r}y_1^{n+1-r'}(y_1+y_2)^{r'}\\
            &\qquad\qquad\qquad\qquad + g_{12}^{n+1-r}(g_{12}+g_{22})^{r}y_2^{n+1-s'}(2y_1+y_2)^{s'}
            \end{split}
        \end{equation}
        and
        \begin{equation}\label{eqn 2 in the system of NGB when n=m}
            \begin{split}
            &(g_{21}y_1+g_{22}y_2)^{n+1-s}((2g_{11}+g_{21})y_1+(2g_{12}+g_{22})y_2)^{s}\\
            &\qquad=g_{21}^{n+1-s}(2g_{11}+g_{21})^{s}y_1^{n+1-r'}(y_1+y_2)^{r'}\\
            &\qquad\qquad\qquad\qquad + g_{22}^{n+1-s}(2g_{12}+g_{22})^{s}y_2^{n+1-s'}(2y_1+y_2)^{s'}.
            \end{split}
        \end{equation}

        We first show that $\alpha'=0$.
        Plug $y_1=1$ and $y_2=-1$ into the equation~\eqref{eqn 1 in the system of NGB when n=m} to get the equation
        \begin{equation}\label{1st equation of NGB when n=m}
            \begin{split}
            &(g_{11}-g_{12})^{n+1-r}((g_{11}+g_{21})-(g_{12}+g_{22}))^{r}\\
            &~= g_{12}^{n+1-r}(g_{12}+g_{22})^{r}(-1)^{n+1-s'}.
            \end{split}
        \end{equation}
        Since we assume that $\alpha'$ is not zero, $g_{12}(g_{12}+g_{22}) \neq 0$. Then, by~\eqref{1st equation of NGB when n=m}, we have
        $$
            \left(\frac{g_{11}}{g_{12}}-1\right)^{n+1-r}\left(\frac{g_{11}+g_{21}}{g_{12}+g_{22}}-1\right)^{r}=(-1)^{n+1-s'}.
        $$
        Thus $\frac{g_{11}}{g_{12}}=2$ or $0$, and $\frac{g_{11}+g_{21}}{g_{12}+g_{22}}=2$ or $0$. In these cases, both $g_{11}$ and $g_{21}$ are even, which
        contradicts to $\det(G) = \pm1$. Hence, $\alpha'$ is zero.

        We next show that $\beta=0$.
        Plug $y_1=1$ and $y_2=-2$ into the equation~\eqref{eqn 2 in the system of NGB when n=m} to get the equation
        \begin{equation}\label{2nd equation of NGB when n=m}
            \begin{split}
            &(g_{21}-2g_{22})^{n+1-s}((2g_{11}+g_{21})-2(2g_{12}+g_{22}))^s\\
            &~= g_{21}^{n+1-s}(2g_{11}+g_{21})^s(-1)^{r'}.
            \end{split}
        \end{equation}
        Since we assume that $\beta$ is not zero, $g_{21}(2g_{11}+g_{21})\neq 0$. Then, by~\eqref{2nd equation of NGB when n=m}, we have
        $$
            \left(1-\frac{2g_{22}}{g_{21}}\right)^{n+1-s}\left(1-\frac{2(2g_{12}+g_{22})}{2g_{11}+g_{21}}\right)^s=(-1)^{r'}.
        $$
        Thus $\frac{g_{22}}{g_{21}}=0$ or $1$, and $\frac{2g_{12}+g_{22}}{2g_{11}+g_{21}}=0$ or $1$. In these cases, $\det G \neq \pm 1$ which is a contradiction. Hence, $\beta$ is zero.

        Now we will show that $s=s'$ or $s+s'=m+1$, and $r=r'$ or $r+r'=n+1$. Since both $\alpha'$ and $\beta$ are zero, we have
        \begin{equation*}
            \bar{\phi}(x_1^{n+1-r}(x_1+x_2)^r)=\alpha y_1^{n+1-r'}(y_1+y_2)^{r'}
        \end{equation*}
        and
        \begin{equation*}
            \bar{\phi}(x_2^{n+1-s}(2x_1+x_2)^s)=\beta^\prime y_2^{n+1-s'}(2y_1+y_2)^{s'}.
        \end{equation*}
         Hence, by using the same argument as in case 1, we can show that $s=s'$ or $s+s'=m+1$, and $r=r'$ or $r+r'=n+1$.

    \end{proof}

    \begin{lemma}\label{lem:non-iso cohomology}
        If $n \neq m$, then two quasitoric manifolds $M_{\vs,\vr}$ and $M_{\vr',\vs'}$
        are not homeomorphic for any
        chosen vectors $\vs,\,\vr' \in \Z^m$ and $\vr,\,\vs' \in \Z^n$
        as in~\eqref{vectors s and r}.
        That is,
        \begin{equation*}
        \begin{array}{l}
        \vs:=(\underbrace{2,\ldots,2}_s,0,\ldots,0),~ \vr':=(\underbrace{1,\ldots,1}_{r'},0,\ldots,0)\in \Z^m, \mbox{ and}\\ \vr:=(\underbrace{1,\ldots,1}_r,0,\ldots,0),~ \vs':=(\underbrace{2,\ldots,2}_{s'},0,\ldots,0) \in \Z^n.
        \end{array}
        \end{equation*}
    \end{lemma}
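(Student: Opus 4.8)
The strategy is to distinguish $M_{\vs,\vr}$ from $M_{\vr',\vs'}$ by their cohomology rings: since cohomology is a homeomorphism invariant, it suffices to show that, under the hypothesis $n\neq m$, the graded rings $H^\ast(M_{\vs,\vr})$ and $H^\ast(M_{\vr',\vs'})$ are never isomorphic. By~\eqref{cohomology ring of NGB}, and by~\eqref{cohomology ring} applied with $\va=\vr'$ and $\bb=\vs'$,
$$
H^\ast(M_{\vs,\vr})=\Z[x_1,x_2]/\mathcal I,\qquad
\mathcal I=\langle x_1^{\,n+1-r}(x_1+x_2)^{r},\ x_2^{\,m+1-s}(2x_1+x_2)^{s}\rangle,
$$
$$
H^\ast(M_{\vr',\vs'})=\Z[y_1,y_2]/\mathcal J,\qquad
\mathcal J=\langle y_1^{\,n+1-s'}(y_1+2y_2)^{s'},\ y_2^{\,m+1-r'}(y_1+y_2)^{r'}\rangle,
$$
where every exponent is at least $1$ because $1\le s,r'\le m$ and $1\le r,s'\le n$.

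Suppose there were a graded ring isomorphism $H^\ast(M_{\vs,\vr})\cong H^\ast(M_{\vr',\vs'})$. As in the proofs of Proposition~\ref{prop:GB_int} and Theorem~\ref{Cohomological rigidity 2}, it lifts to a grading-preserving ring isomorphism $\bar\phi\colon\Z[x_1,x_2]\to\Z[y_1,y_2]$ with $\bar\phi(\mathcal I)=\mathcal J$; such a $\bar\phi$ is a unimodular integral change of variables, so it carries each primitive linear form to a primitive linear form. The crucial point, and where the hypothesis $n\neq m$ enters, is that the two polynomial generators of $\mathcal I$ have distinct degrees $n+1$ and $m+1$, so the homogeneous component of $\mathcal I$ of the smaller degree $\min(n+1,m+1)$ is a free $\Z$-module of rank one, generated by the single generator $h$ living in that degree — which is primitive, having leading coefficient $1$. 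The same holds for $\mathcal J$, with a primitive generator $h'$ of the same degree. Since $\bar\phi$ is graded and $\bar\phi(\mathcal I)=\mathcal J$, it follows that $\bar\phi(h)=\pm h'$.

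Next I would extract a lattice invariant from $h$. Factor $h$ into pairwise non-associate primitive linear forms (with multiplicities) in the unique factorization domain $\Z[x_1,x_2]$, and likewise for $h'$. Because $\bar\phi$ sends primitive linear forms to primitive linear forms, unique factorization forces the images under $\bar\phi$ of the linear factors of $h$ to be, up to sign, exactly the linear factors of $h'$. Viewing a linear form $c_1x_1+c_2x_2$ as the vector $(c_1,c_2)$ of the dual lattice, $\bar\phi$ then maps the sublattice $L$ spanned by the linear factors of $h$ onto the sublattice $L'$ spanned by those of $h'$; as $\bar\phi$ is unimodular, the indices $[(\Z^2)^\ast:L]$ and $[(\Z^2)^\ast:L']$ must agree. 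It remains only to compute them. If $n>m$, the relevant lowest-degree generators are $x_2^{\,m+1-s}(2x_1+x_2)^{s}$ and $y_2^{\,m+1-r'}(y_1+y_2)^{r'}$, whose linear factors span sublattices of index $2$ and $1$ respectively; if $n<m$, they are $x_1^{\,n+1-r}(x_1+x_2)^{r}$ and $y_1^{\,n+1-s'}(y_1+2y_2)^{s'}$, with indices $1$ and $2$. In either case the two indices disagree, a contradiction, so the cohomology rings are not isomorphic and the manifolds are not homeomorphic.

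I expect the only genuinely delicate step to be the verification that ``the index of the sublattice spanned by the linear factors of the lowest-degree relation'' is a well-defined isomorphism invariant; this rests on the standard lift of the cohomology isomorphism to the polynomial rings, on unique factorization in $\Z[x_1,x_2]$, and on $\bar\phi$ being unimodular. The hypothesis $n\neq m$ is used exactly once, to guarantee that the minimal-degree homogeneous component of each ideal is cyclic so that a distinguished generator must be sent (up to sign) to a distinguished generator; for $n=m$ both generators occupy the same degree, this rigidity disappears, and indeed the conclusion fails, since $M_{\vr',\vs'}$ is then identified with an $M_{\vs,\vr}$-type manifold by interchanging the two simplex factors. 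Everything after the invariance claim is the elementary determinant computation above.
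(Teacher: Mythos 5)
Your proposal is correct and follows essentially the same route as the paper: lift the isomorphism to $\bar\phi\colon\Z[x_1,x_2]\to\Z[y_1,y_2]$, observe that (since $n\neq m$) the lowest-degree generator of one ideal must map to a multiple of the lowest-degree generator of the other, and then distinguish the two sides by whether their primitive linear factors span all of the degree-one lattice. The paper phrases this last invariant as ``the prime divisors generate $\Z[x_1,x_2]$ as a $\Z$-algebra'' versus not, which is exactly your index-$1$ versus index-$2$ computation.
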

    \begin{proof}
        It is enough to show the case when $n<m$.
        Let $\mathcal{I} \subset \Z[x_1,x_2]$ be the ideal generated by the homogeneous polynomials $x_1^{n+1-r}(x_1+x_2)^r$ and $x_2^{m+1-s}(2x_1+x_2)^s$, and let $\mathcal{J} \subset \Z[y_1,y_2]$ be also the ideal generated by the homogeneous polynomials $y_1^{n+1-s'}(y_1+2y_2)^{s'}$ and $y_2^{m+1-r'}(y_1+y_2)^{r'}$. Then we have
        $$H^\ast(M_{\vs,\vr})=\Z[x_1,x_2]/\langle x_1^{n+1-r}(x_1+x_2)^r,x_2^{m+1-s}(2x_1+x_2)^s \rangle,$$
        and
        $$H^\ast(M_{\vr',\vs'})=\Z[y_1,y_2]/\langle y_1^{n+1-s'}(y_1+2y_2)^{s'},y_2^{m+1-r'}(y_1+y_2)^{r'}\rangle.$$

        Suppose that $M_{\vs,\vr}$ and $M_{\vr',\vs'}$ are homeomorphic for some $\vs,\,\vr' \in \Z^m$ and $\vr,\,\vs' \in \Z^n$.
        Then the ring isomorphism $\phi: H^\ast(M_{\vs,\vr}) \to H^\ast(M_{\vr',\vs'})$ lifts to a grading preserving isomorphism $\bar{\phi}: \Z[x_1,x_2] \to \Z[y_1,y_]$ with $\bar{\phi}(\mathcal{I})=\mathcal{J}$. Then we have $$\bar{\phi}(x_1^{n+1-r}(x_1+x_2)^r)=\alpha y_1^{n+1-s'}(y_1+2y_2)^{s'}$$ for some nonzero integer $\alpha$. But this is a contradiction because the prime divisors of the left hand side generate $\Z[x_1,x_2]$ as a $\Z$-algebra, whereas the prime divisors of the right hand side do not generate $\Z[y_1,y_2]$.

        Therefore, there is no isomorphism between $H^\ast(M_{\vs,\vr})$ and $H^\ast(M_{\vr',\vs'})$, so $M_{\vs,\vr}$ and $M_{\vr',\vs'}$ are not homeomorphic.
    \end{proof}

    \begin{theorem}\label{Cohomological rigidity 3}
        Two quasitoric manifolds over $\Delta^n \times \Delta^m$ with $n,m>1$ are homeomorphic if and only if their cohomology rings are isomorphic as graded rings.
    \end{theorem}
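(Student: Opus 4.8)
The plan is to feed a short case analysis into the structural results already established, letting Proposition~\ref{prop:GB_int}, Theorem~\ref{thm:classification of CMS}, Theorem~\ref{Cohomological rigidity 2} and Lemma~\ref{lem:non-iso cohomology} do the substantive work. The ``only if'' direction is trivial. For the converse, let $M$ and $N$ be quasitoric manifolds over $\Delta^n\times\Delta^m$ with $n,m>1$ and with $H^\ast(M)\cong H^\ast(N)$ as graded rings. Recall that each of $M$ and $N$ is equivalent either to a two-stage generalized Bott manifold or to some $M_{\vs,\vr}$ with $\vs,\vr$ as in~\eqref{vectors s and r} (possibly after interchanging the two simplex factors, which does not change the polytope), and that by Proposition~\ref{prop:condition to be GBM} such an $M_{\vs,\vr}$ is never equivalent to a generalized Bott manifold.

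First I would deal with the case in which one of the two manifolds, say $M$, is equivalent to a two-stage generalized Bott manifold $B_2$. Then $H^\ast(N)\cong H^\ast(M)\cong H^\ast(B_2)$, and $B_2$ is either a $\CP^m$-bundle over $\CP^n$ or a $\CP^n$-bundle over $\CP^m$. In the first case Proposition~\ref{prop:GB_int} applies directly (here the hypothesis $m>1$ is used), and in the second case one applies Proposition~\ref{prop:GB_int} with the roles of $n$ and $m$ interchanged (here $n>1$ is used); either way one concludes that $N$ too is equivalent to a generalized Bott manifold. Since $M$ and $N$ are then both generalized Bott manifolds with isomorphic cohomology rings, Theorem~\ref{thm:classification of CMS} provides a diffeomorphism, hence a homeomorphism, between them. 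The same use of Proposition~\ref{prop:GB_int} also shows that a ``mixed'' pair cannot occur: if $M$ is equivalent to a generalized Bott manifold and $H^\ast(N)\cong H^\ast(M)$, then $N$ is forced to be equivalent to a generalized Bott manifold as well.

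It remains to treat the case in which neither $M$ nor $N$ is equivalent to a generalized Bott manifold. Then $M$ is equivalent to $M_{\vs,\vr}$ over $\Delta^n\times\Delta^m$ with $\vs\in\Z^m$ and $\vr\in\Z^n$, while $N$ is equivalent to a quasitoric manifold over the same polytope that is not a generalized Bott manifold. According to which simplex factor carries the $2$'s in the normalized characteristic matrix, $N$ is equivalent either to some $M_{\vs',\vr'}$ with $\vs'\in\Z^m$ and $\vr'\in\Z^n$, or to some $M_{\vr',\vs'}$ of the transposed shape appearing in Lemma~\ref{lem:non-iso cohomology}. In the transposed case: if $n\neq m$, then Lemma~\ref{lem:non-iso cohomology} shows that $H^\ast(M)$ and $H^\ast(N)$ cannot be isomorphic, contradicting our hypothesis; and if $n=m$, then such an $N$ is homeomorphic to a quasitoric manifold of the standard form $M_{\vs'',\vr''}$ after interchanging the two simplex factors, which reduces us to the untransposed case. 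Finally, once both $M$ and $N$ are in the standard form $M_{\vs,\vr}$ and $M_{\vs',\vr'}$, the implication (2)$\Rightarrow$(3) of Theorem~\ref{Cohomological rigidity 2} turns the ring isomorphism $H^\ast(M)\cong H^\ast(N)$ into the desired homeomorphism.

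Essentially all of the technical difficulty has already been absorbed into Proposition~\ref{prop:GB_int}, Theorem~\ref{Cohomological rigidity 2} and Lemma~\ref{lem:non-iso cohomology}, so what remains is an organized case check. The one point that genuinely needs care is the bookkeeping of the two normalizations $M_{\vs,\vr}$ versus $M_{\vr',\vs'}$ --- that is, tracking which simplex factor carries the $2$'s, and using Lemma~\ref{lem:non-iso cohomology} to eliminate the wrong normalization precisely when $n\neq m$ before invoking Theorem~\ref{Cohomological rigidity 2}. A secondary point to verify is that Proposition~\ref{prop:GB_int} may indeed be applied with $n$ and $m$ exchanged; this is exactly where the hypotheses $n,m>1$ are needed in the generalized-Bott branch of the argument.
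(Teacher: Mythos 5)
Your proposal is correct and follows essentially the same route as the paper's proof: reduce via Proposition~\ref{prop:GB_int} and Theorem~\ref{thm:classification of CMS} in the generalized Bott case, and via Lemma~\ref{lem:non-iso cohomology} plus Theorem~\ref{Cohomological rigidity 2} in the $M_{\vs,\vr}$ case. Your extra bookkeeping (swapping the roles of $n$ and $m$ when applying Proposition~\ref{prop:GB_int}, and handling the transposed normalization when $n=m$ by interchanging the simplex factors) is a slightly more careful version of what the paper leaves implicit.
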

    \begin{proof}
        Let $M$ and $N$ be quasitoric manifolds over $\Delta^n \times \Delta^m$. Assume that $H^\ast(M) \cong H^\ast(N)$.

        If $M$ is equivalent to a generalized Bott manifold, then $N$ is also equivalent to a generalized Bott manifold by Proposition~\ref{prop:GB_int}, so $M$ and $N$ are homeomorphic by Theorem~\ref{thm:classification of CMS}.

        If $M$ is equivalent to $M_{\vs,\vr}$, then $N$ is equivalent to $M_{\vs',\vr'}$ or $M_{\vr',\vs'}$ by Proposition~\ref{prop:GB_int}. But by Lemma~\ref{lem:non-iso cohomology}, $N$ must be equivalent to $M_{\vs',\vr'}$. Thus $M$ and $N$ are homeomorphic by Theorem~\ref{Cohomological rigidity 2}.

        Hence, for any case, $M$ is homeomorphic to $N$.
    \end{proof}

    \begin{corollary}
        Let $N(n,m)$ be the number of quasitoric manifolds over $\Delta^n \times \Delta^m$ which are not homeomorphic to generalized Bott manifolds.
        \begin{enumerate}
            \item When $n=m$, $N(n,n)=\lfloor \frac{n+1}{2}\rfloor \times \lfloor \frac{n+1}{2} \rfloor$.
            \item When $n \neq m$ and $n,m>1$, $N(n,m)=2\lfloor \frac{n+1}{2}\rfloor \times \lfloor \frac{m+1}{2} \rfloor$.
            \item $N(n,1)=0$ for even $n$ and $N(n,1)=2$ for odd $n \geq 3$.
        \end{enumerate}
    \end{corollary}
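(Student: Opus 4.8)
The plan is to reduce the count to the homeomorphism classification, established in the previous sections, of those quasitoric manifolds with $\betti_2=2$ over $\Delta^n\times\Delta^m$ that are \emph{not} equivalent to a generalized Bott manifold. By the discussion of Section~\ref{sec:quasitoric manifolds of $b_2=2$}, Proposition~\ref{prop:condition to be GBM} and Lemma~\ref{lem:non-iso cohomology}, such a manifold is equivalent either to a manifold $M_{\vs,\vr}$ of \emph{first type}, with $\vs\in\Z^m$ and $\vr\in\Z^n$ as in~\eqref{vectors s and r} and $1\le s\le m$, $1\le r\le n$, or to a manifold $M_{\vr',\vs'}$ of \emph{second type}, with $\vr'\in\Z^m$ and $\vs'\in\Z^n$, $1\le r'\le m$, $1\le s'\le n$, in the notation of Lemma~\ref{lem:non-iso cohomology}. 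When $n,m>1$ none of these is homeomorphic to a generalized Bott manifold: if one were, its cohomology ring would be that of a two-stage generalized Bott manifold, so by Proposition~\ref{prop:GB_int} it would be equivalent to a generalized Bott manifold, contradicting Proposition~\ref{prop:condition to be GBM} (a manifold $M_{\va,\bb}$ with $\va,\bb$ both nonzero is never equivalent to a generalized Bott manifold). Hence for $n,m>1$ the number $N(n,m)$ is the number of homeomorphism classes among the first- and second-type manifolds.

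I would then count these classes. By Theorem~\ref{Cohomological rigidity 2}, $M_{\vs,\vr}$ and $M_{\vs',\vr'}$ are homeomorphic exactly when $\{s,\,m+1-s\}=\{s',\,m+1-s'\}$ and $\{r,\,n+1-r\}=\{r',\,n+1-r'\}$; since $\{1,\dots,m\}$ has $\lfloor\frac{m+1}{2}\rfloor$ orbits under $s\mapsto m+1-s$ and $\{1,\dots,n\}$ has $\lfloor\frac{n+1}{2}\rfloor$ orbits under $r\mapsto n+1-r$, the first-type manifolds fall into exactly $\lfloor\frac{m+1}{2}\rfloor\cdot\lfloor\frac{n+1}{2}\rfloor$ homeomorphism classes; interchanging the roles of the two simplex factors, the same holds for the second-type manifolds.

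This gives (2): when $n\neq m$ with $n,m>1$, Lemma~\ref{lem:non-iso cohomology} shows no first-type manifold is homeomorphic to a second-type one, so the two lists are disjoint and $N(n,m)=2\lfloor\frac{n+1}{2}\rfloor\lfloor\frac{m+1}{2}\rfloor$. For (1), $n=m$: the combinatorial automorphism of $\Delta^n\times\Delta^n$ swapping the two factors is covered by a homeomorphism that carries each first-type $M_{\vs,\vr}$ to a second-type manifold---concretely it identifies the two cohomology rings via $x_1\leftrightarrow x_2$---so by Theorem~\ref{Cohomological rigidity 3} every second-type manifold is homeomorphic to a first-type one, and (again by Theorem~\ref{Cohomological rigidity 2}) this introduces no new coincidences among first-type manifolds; hence $N(n,n)=\lfloor\frac{n+1}{2}\rfloor^2$. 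The remaining value $N(1,1)=1=\lfloor\frac{2}{2}\rfloor^2$ follows directly from Example~\ref{exam:hirz}, since $\CP^2\#\CP^2$ is the only quasitoric manifold over $\Delta^1\times\Delta^1$ which is not a generalized Bott manifold.

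For (3), $m=1$, I would use Section~\ref{sec:classification of quasitoric manifolds when m=1}: by Corollary~\ref{cohomological rigidity 1}, if $n$ is even every quasitoric manifold over $\Delta^n\times\Delta^1$ is homeomorphic to a generalized Bott manifold, so $N(n,1)=0$; if $n$ is odd and $n\ge 3$, then by Theorem~\ref{thm:classify NGB for n=1} the only quasitoric manifolds over $\Delta^n\times\Delta^1$ not homeomorphic to a generalized Bott manifold are $M_{1,(2,0,\dots,0)}\cong\CP^{n+1}\#\CP^{n+1}$ and $M_{2,(1,0,\dots,0)}$, which have non-isomorphic cohomology rings (as shown in the proof of that theorem), hence are not homeomorphic, so $N(n,1)=2$. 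The orbit counting $\lfloor\frac{n+1}{2}\rfloor$ and the check that no $M_{\vs,\vr}$ with $\vs,\vr$ nonzero is homeomorphic to a generalized Bott manifold are routine; the one step requiring care is the $n=m$ case, where one must verify that the factor-swap fuses the first- and second-type families \emph{exactly}, without collapsing distinct classes within the first type---this is where Theorems~\ref{Cohomological rigidity 2} and~\ref{Cohomological rigidity 3} are used together.
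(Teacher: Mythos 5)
Your proof is correct and takes essentially the same route as the paper, whose own proof consists of a single sentence citing Corollary~\ref{cohomological rigidity 1}, Theorem~\ref{Cohomological rigidity 2}, and Lemma~\ref{lem:non-iso cohomology}; you have simply made explicit the counting of orbits under $s\mapsto m+1-s$ and $r\mapsto n+1-r$, the factor-swap identification in the $n=m$ case, and the fact (via Propositions~\ref{prop:condition to be GBM} and~\ref{prop:GB_int}) that no $M_{\vs,\vr}$ with $\vs,\vr$ nonzero is homeomorphic to a generalized Bott manifold. All of these details check out.
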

    \begin{proof}
        It immediately follows from Corollary~\ref{cohomological rigidity 1}, Theorem~\ref{Cohomological rigidity 2}, and Lemma~\ref{lem:non-iso cohomology}.
    \end{proof}

\section{Proof of Theorem~\ref{main}} \label{sec:proof of main}
    A simple polytope $P$ is said to be \emph{cohomologically rigid} if there exists a quasitoric manifold $M$ over $P$, and whenever there exists a quasitoric manifold $N$ over another polytope $Q$ with a graded ring isomorphism $H^{\ast}(M) \cong H^{\ast}(N)$ there is a combinatorial equivalence $P \approx Q$. By \cite{CPS}, a product of simplices is cohomologically rigid.

    Let $M$ and $M'$ be quasitoric manifolds with $\betti_2=2$. Then they are supported by the polytopes combinatorially equivalent to products of two simplices, say $\Delta^n \times \Delta^m$ and $\Delta^{n'} \times \Delta^{m'}$, respectively. Since products of simplices are cohomologically rigid, if $H^\ast(M) = H^\ast(M')$, then $\{n,m\} = \{n', m'\}$. In other words, two quasitoric manifolds over distinct products of simplices can not have the same cohomology rings.

    By Corollary~\ref{cohomological rigidity 1} and Theorem~\ref{Cohomological rigidity 3}, all quasitoric manifolds over a certain product of two simplices are classified by their cohomology rings. Hence, all quasitoric manifolds with $\betti_2=2$ are classified by their cohomology rings as graded rings.

\section{Classification of quasitoric manifolds with $\betti_2=2$}\label{sec:Classification of quasitoric manifolds}

    Let $\uu = (u_1, \ldots, u_k), \uu' = (u'_1, \ldots, u'_k) \in \Z^k$ and let $\ell$ be a positive integer. We define $\uu$ is \emph{equivalent} to $\uu'$ with respect to $\ell$, denote it by $\uu \sim_\ell \uu'$, if there is $\epsilon =\pm1$ and $w \in \Z$ such that
$$
    \prod_{i=1}^k (1+u_ix) = (1+\epsilon wx)\prod_{i=1}^k (1 + \epsilon(u'_i + w)x) \quad \text{ in } \Z[x]/x^{\ell+1}.
$$   Then from Theorem~\ref{thm:classification of CMS}, Example~\ref{exam:hirz},
    Corollary~\ref{cohomological rigidity 1}, Theorem~\ref{Cohomological rigidity 2}, and Theorem~\ref{Cohomological rigidity 3}, we have the following topological classification.

\begin{theorem}
        \begin{enumerate}
            \item The homeomorphism classes of quasitoric manifold over $\Delta^n \times \Delta^m$ with $n \neq m$ ($n,m >1$) are represented by the following:
                \begin{itemize}
                    \item $M_{\mathbf{0}, \mathbf{0}} = \CP^n \times \CP^m$, a trivial generalized Bott manifold.
                    \item $M_{\va,\mathbf{0}}$ for $\va \in (\Z^m-\mathbf{0})/\sim_n$, non-trivial generalized Bott manifolds.
                    \item $M_{\mathbf{0},\bb}$ for $\bb \in (\Z^n-\mathbf{0})/\sim_m$, non-trivial generalized Bott manifolds.
                    \item $M_{\vs,\vr}$ for $\vs:=(2,\ldots,2,0,\ldots,0) \in \Z^m$ and $\vr:=(1,\ldots,1,0,\ldots,0) \in \Z^n$,
                    \item $M_{\vs,\vr}$ for $\vs:=(1,\ldots,1,0,\ldots,0) \in \Z^m$ and $\vr:=(2,\ldots,2,0,\ldots,0) \in \Z^n$,
                \end{itemize}
                where the number of nonzero components in $\vs$, respectively $\vr$, is positive and less than or equal to $\lfloor \frac{m+1}{2} \rfloor$, respectively $\lfloor \frac{n+1}{2} \rfloor$.
            \item The homeomorphism classes of quasitoric manifold over $\Delta^n \times \Delta^n$ ($n >1$) are represented by the following:
                \begin{itemize}
                    \item $M_{\mathbf{0}, \mathbf{0}} = \CP^n \times \CP^n$.
                    \item $M_{\va,\mathbf{0}}$ for $\va \in (\Z^n-\mathbf{0})/\sim_n$.
                    \item $M_{\vs,\vr}$ for $\vs:=(2,\ldots,2,0,\ldots,0) \in \Z^n$ and $\vr:=(1,\ldots,1,0,\ldots,0) \in \Z^n$,
                \end{itemize}
                where the number of nonzero components in $\vs$ and $\vr$ are positive and less than or equal to $\lfloor \frac{n+1}{2} \rfloor$.

            \item The homeomorphism classes of quasitoric manifolds over $\Delta^1 \times \Delta^n$ ($n>1$ is odd) are represented by the following:
                \begin{itemize}
                    \item $M_{0, \mathbf{0}} = \CP^1 \times \CP^n$.
                    \item $M_{a,\mathbf{0}}$ for $a \in \N$.
                    \item $M_{0,\bb}$ for $\bb \in (\Z^n-\mathbf{0}) /\sim_1$ (see Proposition~\ref{classification of GB with m=1}).
                    \item $\CP^{n+1} \# \CP^{n+1}$.
                    \item $M_{2,(1,0,\ldots,0)}$.
                \end{itemize}
            \item The homeomorphism classes of quasitoric manifolds over $\Delta^1 \times \Delta^n$ ($n$ is even) are represented by the following:
                \begin{itemize}
                    \item $M_{0, \mathbf{0}} = \CP^1 \times \CP^n$.
                    \item $M_{a,\mathbf{0}}$ for $a \in \N$.
                    \item $M_{0,\bb}$ for $\bb \in (\Z^n-\mathbf{0}) /\sim_1$ (see Proposition~\ref{classification of GB with m=1}).
                \end{itemize}
            \item The homeomorphism classes of quasitoric manifolds over $\Delta^1 \times \Delta^1$ are represented by the following:
                \begin{itemize}
                    \item $M_{0, 0} = \CP^1 \times \CP^1$.
                    \item $M_{0, 1} = \CP^2 \# \overline{\CP^2}$.
                    \item $M_{2, 1} = \CP^2 \# \CP^2$.
                \end{itemize}
        \end{enumerate}
\end{theorem}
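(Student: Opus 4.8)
The plan is to assemble the classification results of Sections~\ref{sec:quasitoric manifolds of $b_2=2$} through~\ref{sec:quasi mfds over D^m x D^n} and then, polytope by polytope, to verify that each of the five lists is both exhaustive and without repetitions. First I would recall from Section~\ref{sec:quasitoric manifolds of $b_2=2$} that any quasitoric manifold $M$ with $\betti_2=2$ is supported by a product of two simplices $\Delta^n\times\Delta^m$ (say $n\le m$), and that by Proposition~\ref{prop:condition to be GBM} it is equivalent either to a two-stage generalized Bott manifold, or to one of the standard manifolds $M_{\vs,\vr}$ of \eqref{vectors s and r}. Feeding this dichotomy into the normalization lemmas --- Lemma~\ref{connected sum} and Lemma~\ref{lem:class a=pm2} when $m=1$, Lemma~\ref{lem:NGB} when $n,m>1$ --- shows that every such $M$ is homeomorphic to one of the representatives occurring in the list for its polytope, which is the exhaustiveness half.

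For the irredundancy half I would invoke Corollary~\ref{cohomological rigidity 1}, Theorem~\ref{Cohomological rigidity 2}, and Theorem~\ref{Cohomological rigidity 3}, by which two quasitoric manifolds over a fixed product of two simplices are homeomorphic exactly when their cohomology rings are isomorphic; so it suffices to separate the listed representatives cohomologically. By Proposition~\ref{prop:GB_int} an exotic $M_{\vs,\vr}$ (with $n,m>1$) can never have the cohomology ring of a generalized Bott manifold, which detaches the exotic bullets from the Bott ones. Within the generalized Bott manifolds, Theorem~\ref{thm:classification of CMS} --- or Proposition~\ref{classification of GB with m=1} when $m=1$ --- classifies the $\CP^m$-bundles over $\CP^n$ up to diffeomorphism by the relation $\sim_n$ and the $\CP^n$-bundles over $\CP^m$ by $\sim_m$, which accounts for the bullets $M_{\va,\mathbf 0}$, $M_{\mathbf 0,\bb}$ (and $M_{a,\mathbf 0}$, $M_{0,\bb}$ for $m=1$) and for their internal distinctness. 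When $n=m$ the two bundle directions are interchanged by swapping the two $\CP^n$ factors, so only one of them appears in part (2), and likewise the exotic family $M_{\vs,\vr}$ and its transpose $M_{\vr',\vs'}$ coincide, so only one is listed. Finally, Theorem~\ref{Cohomological rigidity 2} pins down that the surviving exotic representatives (for $n,m>1$) are exactly those with $1\le s\le\lfloor\frac{m+1}{2}\rfloor$ and $1\le r\le\lfloor\frac{n+1}{2}\rfloor$; Theorem~\ref{thm:classify NGB for n=1} handles the exotic classes over $\Delta^1\times\Delta^n$ --- namely $\CP^{n+1}\#\CP^{n+1}=M_{1,(2,0,\dots,0)}$ and $M_{2,(1,0,\dots,0)}$ for odd $n>1$, and none for even $n$ --- and part (5) is exactly Example~\ref{exam:hirz}.

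The step I expect to be the real obstacle is the ``across-family'' comparison in the regime $n\ne m$: one must confirm that a nontrivial $\CP^m$-bundle over $\CP^n$ is never homeomorphic to a nontrivial $\CP^n$-bundle over $\CP^m$, and that the exotic family $M_{\vs,\vr}$ is disjoint from its transpose $M_{\vr',\vs'}$. The latter is precisely Lemma~\ref{lem:non-iso cohomology}. For the former, assuming $n<m$, a graded ring isomorphism $H^\ast(M_{\va,\mathbf 0})\cong H^\ast(M_{\mathbf 0,\bb})$ lifts to an isomorphism of polynomial rings carrying the degree-$2(n+1)$ ideal generator $x_1^{n+1}$ to a scalar multiple of $y_1\prod_{i=1}^n(y_1+b_iy_2)$, the only ideal element in that degree on the other side; but $x_1^{n+1}$ goes to the $(n+1)$-st power of a single linear form, so unique factorization in $\Z[y_1,y_2]$ forces $y_1$ and all the $y_1+b_iy_2$ to be proportional, hence $\bb=\mathbf 0$, a contradiction. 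The same prime-divisor computation, applied to the degree-$4$ generator $x_2(ax_1+x_2)$ against $y_2^2$, settles the analogous comparison needed for the generalized Bott bullets over $\Delta^1\times\Delta^n$ in parts (3) and (4). Once these short computations are recorded, the theorem follows by collecting the pieces.
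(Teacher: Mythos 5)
Your proposal is correct and follows essentially the same route as the paper, whose ``proof'' of this theorem is simply the assembly of Theorem~\ref{thm:classification of CMS}, Example~\ref{exam:hirz}, Proposition~\ref{classification of GB with m=1}, Corollary~\ref{cohomological rigidity 1}, Theorems~\ref{Cohomological rigidity 2} and~\ref{Cohomological rigidity 3}, and Lemma~\ref{lem:non-iso cohomology}. Your explicit prime-divisor argument separating a nontrivial $\CP^m$-bundle over $\CP^n$ from a nontrivial $\CP^n$-bundle over $\CP^m$ when $n\neq m$ (and its degree-$4$ analogue for $m=1$) is a correct treatment of a comparison the paper leaves implicit, and is a welcome addition.
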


\section*{Acknowledgement}
    The authors are thankful to the referee for kind and careful comments and helpful suggestions which improve the paper.

\bigskip

\end{document}